\def\@cite#1#2{{\m@th\upshape\bfseries%
[{#1\if@tempswa{\m@th\upshape\mdseries, #2}\fi}]}}
\theoremstyle{plain}
\newtheorem{thm}[subsection]{Theorem}
\newtheorem{cor}[subsection]{Corollary}
\newtheorem{prop}[subsection]{Proposition}
\newtheorem{lem}[subsection]{Lemma}
\theoremstyle{definition}
\newtheorem{rem}[subsection]{Remark}
\numberwithin{equation}{subsection}
\newcommand{\bC}{{\mathbb{C}}}
\newcommand{\bQ}{{\mathbb{Q}}}
\newcommand{\bR}{{\mathbb{R}}}
\newcommand{\bH}{{\mathbb{H}}}
\newcommand{\bZ}{{\mathbb{Z}}}
\newcommand{\bN}{{\mathbb{N}}}
\newcommand{\M}{{\mathcal{M}}}
\newcommand{\A}{{\mathcal{A}}}
\newcommand{\h}{{\mathcal{h}}}
\newcommand{\C}{{\mathcal{C}}}
\newcommand{\T}{{\mathcal{T}}}
\renewcommand{\S}{{\mathcal{S}}}
\newcommand{\col}{\operatorname{col}}
\newcommand{\End}{\operatorname{End}}
\newcommand{\Jac}{\operatorname{Jac}}
\newcommand{\Span}{\operatorname{span}}
\newcommand{\lcm}{\operatorname{lcm}}
\newcommand{\br}{{\mathbf{r}}}
\newcommand{\CP}{{\mathbb C}\!\operatorname{P}^1}
\renewcommand{\h}{\mathfrak{h}}
\newcommand{\R}{\mathcal{R}}
\renewcommand{\P}{\mathcal{P}}
\begin{document}

\title[The Veech-Ward-Bouw-M\"oller curves]{Schwarz triangle mappings and Teichm\"uller curves:\\
the Veech-Ward-Bouw-M\"oller curves}
%
\author[A.Wright]{Alex~Wright}
\address{Math.\ Dept.\\U. Chicago\\
5734 S. University Avenue\\
Chicago, Illinois 60637}
\email{alexmwright@gmail.com}
%
\date{}

\begin{abstract}
We study a family of Teichm\"uller curves $\T(n,m)$ constructed by Bouw and M\"oller, and previously by Veech and Ward in the cases $n=2,3$. We simplify the proof that $\T(n,m)$ is a Teichm\"uller curve, avoiding the use M\"oller's characterization of Teichm\"uller curves in terms of maximally Higgs bundles. Our key tool is a description of the period mapping of $\T(n,m)$ in terms of Schwarz triangle mappings.

We prove that $\T(n,m)$ is always generated by Hooper's lattice surface with semiregular polygon decomposition. We compute Lyapunov exponents, and determine algebraic primitivity in all cases. We show that frequently, every point (Riemann surface) on $\T(n,m)$ covers some point on some distinct $\T(n',m')$. 

The $\T(n,m)$ arise as fiberwise quotients of families of abelian covers of $\CP$ branched over four points. These covers of $\CP$ can be considered as abelian parallelogram-tiled surfaces, and this viewpoint facilitates much of our study.
\end{abstract}

\maketitle

\setcounter{tocdepth}{1}

\tableofcontents
\newpage

%
%
%

\section{Introduction}\label{S:intro}

A \emph{Teichm\"uller curve} is an isometrically immersed curve in the moduli space of genus $g$ curves $\M_g$, with respect to the Teichm\"uller metric. Teichm\"uller curves give rise to billiards and translation surfaces with optimal dynamical properties \cite{V}, and have a rich and interesting algebro-geometric theory \cite{M, M2}. In analogy with lattices in Lie groups they are either arithmetic (generated by square-tiled surfaces) or not, and come in groups analogous to commensurability classes. In the non-arithmetic case each such commensurability class of Teichm\"uller curves contains a unique extremal element which is called \emph{primitive} \cite{M2}. Also in analogy with lattices (say in $PU(n,1)$) non-arithmetic Teichm\"uller curves seem to be quite rare. The only currently known primitive examples in genus greater than four are  the subject of this paper.

\textbf{The Veech-Ward-Bouw-M\"oller curves.} The \emph{flat pillowcase} is obtained by gluing two isometric squares together, giving a flat metric on $\CP$. Its symmetry group is the Klein four group $\bZ_2\times\bZ_2$. 

\begin{figure}[h]
\includegraphics[scale=0.5]{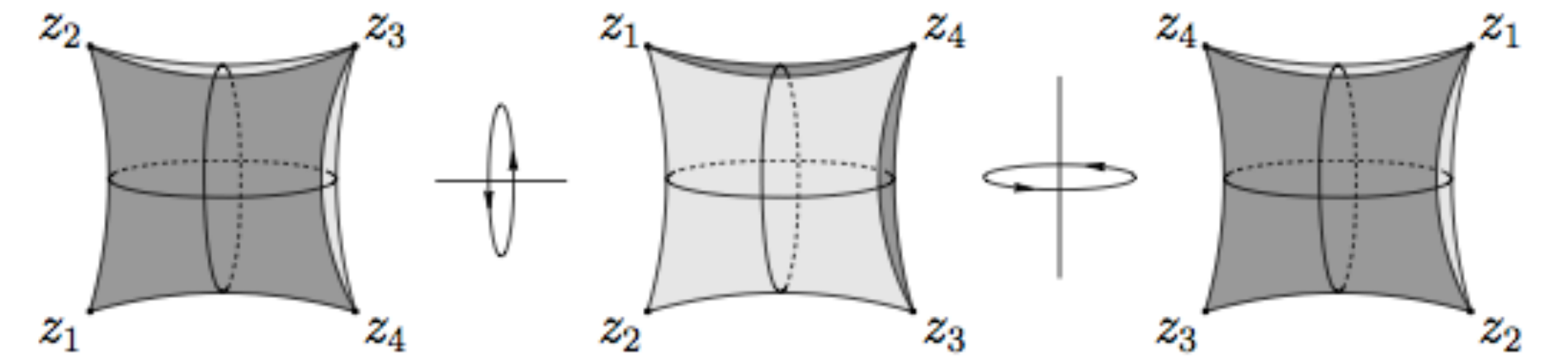}
\caption{The action of two pillowcase symmetries on the flat pillowcase (center).}
\label{F:sigmas}
\end{figure}

More generally, given any four points $z_1, z_2, z_3, z_4\in\CP$, there is a flat metric on $\CP$ obtained by gluing two isometric parallelograms whose corners are the $z_j$. The symmetry group is again the Klein four group, which acts on $\CP$ by M\"obius transformations.     

An \emph{abelian parallelogram-tiled surface} is an abelian cover of $\CP$ branched over at most four points $z_j$ and equipped with a lift of the parallelogram-tiled metric \cite{W1}. In this paper the parallelogram-tiled flat structure is not essential, but it is natural for the flat geometer to keep it in mind. 

Given any cover of $\CP$ branched over $z_1=0, z_2=1, z_3=\lambda, z_4=\infty$ we may vary $\lambda$ to obtain a family of Riemann surfaces over the base $\CP\setminus\{0,1,\infty\}\ni\lambda$. 

For each pair $n,m>1$ with $nm\geq 6$ we will consider a very special family of this type,
$$\pi:\S(n,m)\to \CP\setminus\{0,1,\infty\}. $$ 
The fibers $\pi^{-1}(\lambda)$ are abelian parallelogram-tiled surfaces $S(n,m)$ which are exceptionally symmetric in that they admit a very nice lift of the pillowcase symmetry group $\bZ_2\times \bZ_2$. 

Informally the Veech-Ward-Bouw-M\"oller curve $\T(n,m)$ is the closure in moduli space of the image of the fiberwise quotient map 

$$q:\lambda\mapsto \pi^{-1}(\lambda)/(\bZ_2\times\bZ_2).$$ 

More formally, $\T(n,m)$ is a map from a curve to moduli space (and moreover a base change is required to define $q$). It is the same curve in moduli space that is considered in different language by Bouw-M\"oller, who show that it is a Teichm\"uller curve. Moreover, Bouw-M\"oller show that the $\T(2,m)$ and $\T(3,m)$ are the Teichm\"uller curves considered by Veech and Ward respectively. 

We will give a simplified proof that $\T(n,m)$ is a Teichm\"uller curve. The key ideas are presented in Section 2, but we will hint briefly at the proof before proceeding to describe our new results. 

Royden's Theorem asserts that the Teichm\"uller metric is the same as the Kobayashi metric \cite{Hub, IT}. The magic of the Kobayashi metric is that holomorphic maps are distance nonincreasing, so in particular if the composition of two maps is an isometry, then each of the two maps is also. So to show $\T(n,m)\to\M_g$ is an isometry, it will suffice to show that a single period coefficient on $\T(n,m)$ is an isometry. More precisely, $\T(n,m)$ will be lifted to Torelli space, and a specific entry in the period matrix will shown to be an isometry. 

This program is feasible because the period mapping of $\S(n,m)$ may be completely described in terms of \emph{Schwarz triangle mappings}, which are biholomorphisms from the upper half plane to (in our case hyperbolic) triangles \cite{W1}. 

\textbf{Covering relations.} We have discovered that

\begin{thm}\label{T:cov} Suppose $n'$ divides $n$, and $m'$ divides $m$. If $n$ and $m$ are even suppose also that $n/n'+m/m'$ is even. Then every point (Riemann surface) on $\T(n,m)$ covers a point (Riemann surface) on $\T(n',m')$. 
\end{thm}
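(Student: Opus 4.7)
The plan is to produce the covering map on the level of the abelian covers: construct a natural covering $S(n,m) \to S(n',m')$ fiber by fiber over $\lambda$ that intertwines the lifted pillowcase actions of $\bZ_2\times\bZ_2$ on each side, and then descend to the quotients by $\bZ_2\times\bZ_2$ to obtain the required covering between the points of $\T(n,m)$ and $\T(n',m')$.

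First I would recall the description of $S(n,m)$ (from \cite{W1}) as the abelian cover of $\CP$ branched over the four points $\{0,1,\lambda,\infty\}$ corresponding to a surjection $\pi_1(\CPm) \twoheadrightarrow G(n,m)$ onto a finite abelian group $G(n,m)$, with prescribed images $g_1,g_2,g_3,g_4 \in G(n,m)$ of small loops around the branch points. Under the divisibility hypotheses $n' \mid n$ and $m' \mid m$, there is a natural surjection $G(n,m) \twoheadrightarrow G(n',m')$ sending each $g_j$ to the corresponding generator $g_j'$ for $S(n',m')$; its kernel acts freely on $S(n,m)$ off the ramification locus, and the quotient is $S(n',m')$. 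This already produces the desired unramified covering of the underlying abelian covers at each $\lambda$.

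Second, I would verify that this covering is equivariant for the lifted $\bZ_2\times\bZ_2$-actions. A pillowcase symmetry acts on $\CP$ as a M\"obius involution permuting the $z_j$, and lifts to $S(n,m)$ as an automorphism provided the induced permutation of the $g_j$ extends to an automorphism of $G(n,m)$ (and likewise for $G(n',m')$). The quotient $G(n,m) \twoheadrightarrow G(n',m')$ intertwines these automorphisms up to a choice of translation by an element of the kernel, and the parity condition $2 \mid n/n'+m/m'$ when $n$ and $m$ are both even is precisely the obstruction for this translation to be trivial so that the two lifts match genuinely; when one of $n,m$ is odd the obstruction vanishes automatically because the relevant two-torsion is trivial. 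Once equivariance is established, taking the quotient by the compatible $\bZ_2\times\bZ_2$-actions gives the desired holomorphic covering $\pi^{-1}(\lambda)/(\bZ_2\times\bZ_2) \to \pi'^{-1}(\lambda)/(\bZ_2\times\bZ_2)$ at each $\lambda$.

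Finally, this whole construction is holomorphic in $\lambda$ and so assembles into a covering of families $\S(n,m)\to\S(n',m')$, possibly after enlarging the base by a finite cover to match the base change used in defining $q$; since every point of $\T(n,m)$ is, by definition, in the image of $q$, every such Riemann surface covers the corresponding Riemann surface on $\T(n',m')$. The main obstacle is the equivariance step: one must pin down the lift of each pillowcase involution precisely enough (as an automorphism of $G(n,m)$ combined with a translation) to isolate the cohomological class that detects the parity obstruction, and to show that exactly the stated parity condition makes the two lifts compatible. The rest of the argument is formal once this combinatorial check is done.
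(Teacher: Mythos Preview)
Your overall strategy---build the covering $S(n,m)\to S(n',m')$ fiberwise, show it is $\bZ_2\times\bZ_2$--equivariant, and descend to the quotients---is exactly the paper's approach. But two points in your proposal are off.

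First, you have misplaced the role of the parity hypothesis. You claim the surjection $G(n,m)\twoheadrightarrow G(n',m')$ (equivalently, the covering $S(n,m)\to S(n',m')$) exists whenever $n'\mid n$ and $m'\mid m$, and that the parity condition only enters later as an obstruction to equivariance of the lifted involutions. In fact the parity condition is already needed for the covering to exist at all: in the paper's language (Lemma~\ref{L:cov}), one must show that $kl$ times the row span of $S(n',m')$ lies in the row span of $S(n,m)$, and when $n,m$ are both even the row span only contains $(2m,2m,-2m,-2m)$ and $(2n,-2n,-2n,2n)$, not their halves, so the inclusion fails unless $n/n'+m/m'$ is even. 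Once the covering does exist, equivariance is essentially automatic: the paper observes that the kernel $H'$ of the deck group acting on $S(n,m)$ (with quotient $S(n',m')$) is invariant under conjugation by $\bZ_2\times\bZ_2$, so the action descends. No further parity check is needed at that stage.

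Second, your final paragraph asserts that ``every point of $\T(n,m)$ is, by definition, in the image of $q$.'' This is not correct: $\T(n,m)$ is obtained from the image of $q$ by filling in removable singularities over the punctures at $\lambda=1,\infty$ (Proposition~\ref{P:smooth}), so finitely many points of $\T(n,m)$ are \emph{not} of the form $S(n,m)/(\bZ_2\times\bZ_2)$. The paper handles these remaining points by a continuity argument, which you should include.
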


For example, every point on the Veech Teichm\"uller curve $\T(2,24)$ generated by the regular $24$--gon covers some point on the Veech Teichm\"uller curve $\T(2,8)$ generated by the regular $8$--gon. In other words, given any translation surface $(X,\omega)$ in the $SL(2,\bR)$--orbit of the regular $24$--gon, there is some translation surface $(X', \omega')$ in the $SL(2,\bR)$--orbit of the regular $8$--gon, so that there is a covering of Riemann surfaces $X\to X'$. 

This is surprising because there is no hint that such a result should be true from the flat geometry. 

%

\textbf{Arithmetic origins.} It is also surprising that \emph{arithmetic} Teichm\"uller curves (generated by the abelian square-tiled surfaces $S(n,m)$) can be used to construct the \emph{non-arithmetic} $\T(n,m)$. A direct consequence of the construction is

\begin{thm}\label{T:arith}
A quadratic differential with simple poles may be assigned to all but finitely many points (Riemann surfaces) on $\T(n,m)$, giving each of these Riemann surfaces the structure of a parallelogram-tiled surface.

$\T(n,m)$ is the closure a Hurwitz curve of covers of $\CP$ branched over four points.
\end{thm}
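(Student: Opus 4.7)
Both assertions fall out of the defining construction $q:\lambda\mapsto \pi^{-1}(\lambda)/(\bZ_2\times\bZ_2)$, and the plan is simply to unpack what this construction already gives.

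For the quadratic differential, I begin from the fact that the flat pillowcase structure on $\CP$ is recorded by a meromorphic quadratic differential $\xi$ with simple poles at $z_1,\dots,z_4$ (in flat coordinates $\xi=dz^2$). Each pillowcase symmetry $\sigma\in \bZ_2\times\bZ_2$ acts as a M\"obius involution that preserves $\xi$, because locally $\sigma^*(dz^2)=(\pm dz)^2=dz^2$. On the abelian parallelogram-tiled cover $\pi^{-1}(\lambda)=S(n,m)$, the square of the abelian differential $\omega$ is tautologically the pullback $\pi^*\xi$, and since the lifted $\bZ_2\times\bZ_2$ action $\tilde\sigma$ satisfies $\pi\circ\tilde\sigma=\sigma\circ\pi$, we obtain $\tilde\sigma^*(\omega^2)=\pi^*\sigma^*\xi=\omega^2$. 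Hence $\omega^2$ descends to a meromorphic quadratic differential with simple poles on $S(n,m)/(\bZ_2\times\bZ_2)$. Because this differential is pulled back from the pillowcase $dz^2$, the parallelogram tiling of the pillowcase lifts to a parallelogram tiling of the quotient, equipping every non-cusp Riemann surface on $\T(n,m)$ with a parallelogram-tiled structure.

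For the Hurwitz statement, I would use that $\CP/(\bZ_2\times\bZ_2)$ is again $\CP$, so $S(n,m)/(\bZ_2\times\bZ_2)$ is naturally a cover of $\CP$ branched over the four images of $z_1,\dots,z_4$. Carrying this out uniformly in $\lambda$ produces a family of such covers with constant topological type (genus, degree, monodromy around the branch points). After the base change implicit in the definition of $q$, this gives a well-defined map into the Hurwitz space parametrizing these covers, and by definition $\T(n,m)$ is the closure in $\M_g$ of the image of this map along the forgetful morphism from the Hurwitz space to $\M_g$.

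The main obstacle is bookkeeping rather than substantive mathematics: one must be careful that $\omega^2$ (rather than $\omega$ itself, which each $\tilde\sigma$ may multiply by $\pm 1$) is the correct invariant object, and one must track the base change so that the Hurwitz-space map is single-valued. The ``finitely many'' exceptional points in the first assertion are precisely the cusps of $\T(n,m)$, where the four marked points on the $\CP$ base collide and the parallelogram decomposition degenerates.
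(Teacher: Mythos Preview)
Your approach is essentially the paper's own: both assertions are read off directly from the tower $S(n,m)\to\CP$ modulo $\bZ_2\times\bZ_2$, and your remark that $\omega^2$ rather than $\omega$ is the invariant object is precisely what the paper encodes by saying ``the parallelogram-tiled metric of the quotient has cone angles of $\pi$.''

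Two details need fixing. First, the four branch points downstairs are not ``the four images of $z_1,\dots,z_4$.'' The Klein group permutes $\{z_1,\dots,z_4\}$ transitively, so these have a \emph{single} image in $\CP/(\bZ_2\times\bZ_2)$; the remaining three corners of the quotient pillowcase are the images of the fixed-point pairs of $\sigma_2,\sigma_3,\sigma_4$ (cf.\ Figure~\ref{F:4-cover}). Second, the finitely many excluded points are not the cusps of $\T(n,m)$. The cusp (over $\lambda=0$) is not a point of $\T(n,m)$ at all. The excluded points are instead the interior (orbifold) points over $\lambda=1,\infty$: by Proposition~\ref{P:smooth} the fiber $S(n,m)$ there is nodal but its $\bZ_2\times\bZ_2$-quotient is smooth, so these Riemann surfaces lie on $\T(n,m)$ yet are not of the form $[\text{smooth }S(n,m)]/(\bZ_2\times\bZ_2)$ and inherit no parallelogram tiling from $q$.
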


Here a \emph{Hurwitz curve} is the closure of a space of covers of $\CP$ branched over four points, where all the covers are topologically the same. That is, a Hurwitz curve results from taking a cover of $\CP$ branched over four points and varying the location of the branch points. 

The first part of Theorem \ref{T:arith} is rather surprising: it is unexpected that so many Riemann surfaces (the points of $\T(n,m)$) could support the structure of a lattice surface in two different ways (one square-tiled and one not).

\begin{proof}
The image of $\T(n,m)$ in moduli space is equal to closure of the image of the fiberwise quotient map $q$, by construction. We will show in Proposition \ref{P:smooth} and Theorem \ref{T:BM} that all but finitely many points on $\T(n,m)$ are in the image of the fiberwise quotient map and hence are of the form  $S(n,m)/(\bZ_2\times\bZ_2)$.

The pillowcase symmetry group preserves the parallelogram-tiling, so the quotient of the exceptionally symmetric square-tiled surface $S(n,m)$ by the pillowcase group $\bZ_2\times\bZ_2$ is again parallelogram-tiled. The parallelogram-tiled metric of the quotient has cone angles of $\pi$.
\begin{figure}[h]
\includegraphics[scale=0.6]{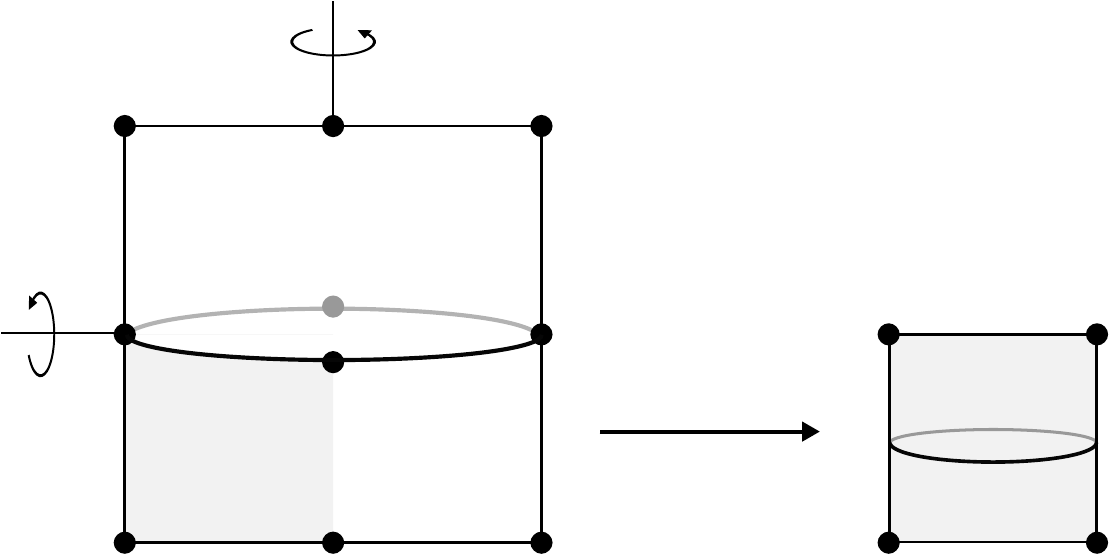}
\caption{The quotient of the flat pillowcase by the pillowcase symmetry group is again a flat pillowcase, of one fourth the area.}
\label{F:4-cover}
\end{figure}

Each point in the image of $q$ admits a cover 
$$S(n,m)/(\bZ_2\times\bZ_2)\to \CP/(\bZ_2\times\bZ_2) \simeq \CP.$$
This map is branched over four points, and it follows easily that $\T(n,m)$ is (up to closure) the space of such covers. Hence $\T(n,m)$ is the closure of a Hurwitz curve.
\end{proof}

\textbf{Real multiplication of Hecke type.} M\"oller has shown that Techm\"uller curves parameterize Riemann surfaces whose Jacobians have a factor with real multiplication (an inclusion of a totally real number field into the endomorphism algebra) \cite{M}. 

If $X$ is a Riemann surface, endomorphisms of $\Jac(X)$ can be considered as ``hidden symmetries" of $X$. Sometimes, they arise from honest symmetries, that is, they are induced by automorphisms of $X$. Ellenberg has studied situations in which $X$ has no automorphisms, but $\Jac(X)$ admits real multiplication which arises from automorphisms of a finite cover of $X$ \cite{E}. In such cases he defines the real multiplication on  $\Jac(X)$ to be of \emph{Hecke type}. See Section \ref{S:Hecke} for definitions. 

\begin{thm}\label{T:Hecke}
The real multiplication on a factor of the Jacobians of all but finitely many points (Riemann surfaces) on $\T(n,m)$ guaranteed by \cite{M} is of Hecke type. That is, the endomorphisms of the Jacobian, which together form real multiplication, come from deck transformations of the exceptionally symmetric square-tiled surfaces which cover points (Riemann surfaces) on $\T(n,m)$. 
\end{thm}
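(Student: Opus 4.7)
The plan is to unpack the decomposition of $\Jac(S(n,m))$ under the deck group of the abelian cover $S(n,m)\to\CP$ and then show that the endomorphisms assembling the totally real factor in $\Jac(X)$, where $X=S(n,m)/(\bZ_2\times\bZ_2)$ is a generic point on $\T(n,m)$, arise by symmetrizing deck transformations across the pillowcase quotient.

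First, I would recall the isotypic decomposition of $H^{1,0}(S(n,m))$ under the abelian deck group $G$ of $S(n,m)\to\CP$. Since $G$ is abelian, $H^{1,0}$ decomposes into character eigenlines, and the Schwarz triangle mapping description of the period mapping of $\S(n,m)$ pins down which characters $\chi$ occur and with what Hodge weight; in particular it singles out a distinguished Galois orbit $\{\chi_1,\ldots,\chi_r\}$ of primitive characters of $G$ over $\bQ$. The key identification to make is that Möller's real multiplication factor of $\Jac(X)$, after pullback along the finite map $S(n,m)\to X$, is the sum of the rational Hodge substructures of $H^1(S(n,m))$ cut out by the $\chi_i$ together with their conjugates $\overline{\chi_i}$, and that the resulting $\bQ$-Hodge structure carries the maximal totally real subfield of $\bQ(\chi_1)$ as its endomorphism algebra.

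Second, I would construct the endomorphisms explicitly. For each $\sigma\in\operatorname{Gal}(\bQ(\chi_1)^+/\bQ)$, Fourier inversion on the finite abelian group $G$ expresses an element $\phi_\sigma\in\End(\Jac(S(n,m)))$ as a $\bQ$-linear combination of deck transformations, chosen so that its eigenvalue on the $\chi_i$-eigenspace equals $\sigma$ applied to $\chi_i(g_0)+\overline{\chi_i(g_0)}$ for a fixed generator $g_0$. To pass from $\Jac(S(n,m))$ to $\Jac(X)$, I would average $\phi_\sigma$ over the lifted pillowcase group $\bZ_2\times\bZ_2$. Because the lifted $\bZ_2\times\bZ_2$ normalizes $G$ inside $\operatorname{Aut}(S(n,m))$, the averaged endomorphism preserves the $\bZ_2\times\bZ_2$-invariant subspace and descends through the identification $H^{1,0}(X)\cong H^{1,0}(S(n,m))^{\bZ_2\times\bZ_2}$ to an endomorphism of $\Jac(X)$; by construction it lies in the $\bQ$-span of deck transformations of the cover $S(n,m)\to X$, which is exactly Ellenberg's Hecke type condition.

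The main obstacle is checking that the deck-transformation-generated subalgebra of $\End(\Jac(X))$ so produced contains the full totally real field provided by Möller, rather than a proper subring. This reduces to matching the character-theoretic variation of Hodge structure on $H^1(S(n,m))$ coming from the Schwarz triangle machinery of Section~2 with the VHS that Möller uses to produce real multiplication on the relevant Jacobian factor. Once the two are identified on a single Galois orbit $\{\chi_i\}$, the $\phi_\sigma$ automatically span a $\bQ$-algebra isomorphic to $\bQ(\chi_1)^+$, and Möller's theorem then forces this field to coincide with the full predicted real multiplication field on dimension grounds. A final bookkeeping argument handles the finitely many points of $\T(n,m)$ that lie outside the image of the fiberwise quotient map $q$.
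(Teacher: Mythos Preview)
Your strategy is the same as the paper's: realize $X=S(n,m)/(\bZ_2\times\bZ_2)$ as $Y/H$ with $Y=S(n,m)$, take $G$ to be the group generated by the abelian deck group together with the lifted pillowcase group, and show that maps of the form $\pi_H\circ g_*\circ\pi_H$ with $g$ a deck transformation already produce endomorphisms having the generating differential $\omega$ as an eigenform, with eigenvalues generating the trace field. So the proposal is correct in outline.

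Where you diverge is in execution, and your route is more circuitous than necessary. You propose to build, for each $\sigma\in\operatorname{Gal}(\bQ(\chi_1)^+/\bQ)$, an element $\phi_\sigma$ via Fourier inversion with a prescribed eigenvalue on the $\chi_i$-line, and then average. This indexing by Galois elements is not needed and is a bit tangled as stated (the Galois conjugate of $\chi(g_0)+\overline{\chi(g_0)}$ is the eigenvalue of $g_0+g_0^{-1}$ on a \emph{different} character line, not a different eigenvalue on the same line). The paper instead takes a bare deck transformation $g=T_1^{p}T_2^{q}$, uses the commutation relations $\tilde\sigma_2 T_1\tilde\sigma_2=T_2$, $\tilde\sigma_4 T_1\tilde\sigma_4=T_4$, etc., to compute directly that $\pi_H\, g\,\pi_H$ acts on the invariant vector $\sum_{\sigma\in\bZ_2\times\bZ_2}\sigma^*\omega$ by the explicit scalar
\[
e^{2\pi i(pr_1+qr_2)/N}+e^{-2\pi i(pr_1+qr_2)/N}+e^{2\pi i(pr_2+qr_1)/N}+e^{-2\pi i(pr_2+qr_1)/N},
\]
and then simply checks that for $(p,q)=(1,1),(1,-1),(1,0)$ these scalars generate the invariant trace field of $\Delta^+(n,m,\infty)$. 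No Fourier inversion, no Galois bookkeeping, no matching of variations of Hodge structure is required: since M\"oller's theorem already says $\omega$ is an eigenform for the real multiplication by the trace field $F$, exhibiting Hecke-type endomorphisms with $\omega$ as eigenform and eigenvalues generating $F$ finishes the proof. Your ``main obstacle'' thus dissolves into a three-line eigenvalue check rather than an abstract VHS comparison.
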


\textbf{Generators.} In Section \ref{S:eqns}, extending work of Bouw-M\"oller in the case when $n$ and $m$ are relatively prime, we compute holomorphic one forms which generate each $\T(n,m)$; the formulas and corollaries are given in Section \ref{S:grn}. 

Hooper has given an elementary construction of Teichm\"uller curves which are generated by translation surfaces with a particularly beautiful flat structure having a \emph{semiregular polygon decomposition} \cite{H}. These flat surfaces were discovered independently by Ronen Mukamel. Comparing our generators to Hooper's, we obtain the next theorem, which again was previously known in the case where $n$ and $m$ are relatively prime. 

\begin{thm}\label{T:same}
The Teichm\"uller curves constructed by Hooper are the same as the Veech-Ward-Bouw-M\"oller curves. 
\end{thm}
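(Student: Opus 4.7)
The plan is a direct comparison of explicit generators. A non-arithmetic Teichmüller curve in $\M_g$ is determined by the $SL(2,\bR)$-orbit of any translation surface that projects onto it, so to prove the theorem it suffices to produce, for each admissible pair $(n,m)$, a single translation surface which is simultaneously a generator of $\T(n,m)$ and (up to the $SL(2,\bR)$-action) Hooper's semiregular polygon surface with parameters $(n,m)$.

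I would begin by invoking the explicit formulas of Section \ref{S:grn}, which present a holomorphic one-form on a specific fiber $\pi^{-1}(\lambda)/(\bZ_2\times\bZ_2)$ generating $\T(n,m)$. In the case $\gcd(n,m)=1$, the translation surface underlying this generator was already identified with Hooper's by Bouw-Möller and Hooper, so no additional argument is required there and the remaining content is the case $\gcd(n,m)>1$.

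For general $(n,m)$, I would unfold the flat structure $(X,\omega)$ defined by the generator into polygons. Since $\omega$ is an eigenform for the action of the covering group of $\S(n,m)\to\CP$ on which the pillowcase $\bZ_2\times\bZ_2$ acts, both of its foliations have cylinder decompositions whose combinatorics are governed by the underlying character. A direct computation of the number of cylinders together with their heights, circumferences, and twist parameters reproduces exactly the data of Hooper's semiregular polygon surface associated to $(n,m)$, yielding the required affine equivalence.

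The main obstacle is bookkeeping in the non-coprime setting: when $\gcd(n,m)>1$ the cover $\S(n,m)\to\CP$ may be disconnected and the space of eigenforms splits into several pieces, so one must first pin down the correct connected component and the correct character before any matching can be attempted; this is the same sort of selection that underlies the parity condition in Theorem \ref{T:cov}. Once the selection is made and normalizations aligned with Hooper's conventions, the remaining argument reduces to verifying that two explicit lists of polygon side-lengths and edge identifications coincide, a finite combinatorial check carried out directly from the formulas of Section \ref{S:grn}.
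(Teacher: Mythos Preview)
Your strategy is sound in spirit---match generators---but it is far more involved than what is actually needed, and it rests on a misconception about where the comparison takes place.

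The paper's proof is one line: the algebraic equations and differentials produced in Theorem~\ref{T:eqns} are \emph{literally} the same formulas Hooper records in \cite[Proposition~17]{H}. Hooper did not only give his surfaces as semiregular polygon assemblies; he also computed their underlying algebraic curves and one-forms. So once Theorem~\ref{T:eqns} is in hand (and its proof in Section~\ref{S:eqns} is where all the work lies), the comparison is a symbol-by-symbol check of equations, with no flat geometry, no cylinder decompositions, and no period computations required. Your plan to unfold $(X,\omega)$ into polygons and match moduli of cylinders would in principle work, but it replaces a trivial step by a substantial and entirely avoidable computation.

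Your discussion of obstacles also contains a misstep. The cover $S(n,m)\to\CP$ is connected for all $n,m$ (it is defined via a function field extension $\bC(z)[w_1,w_2]$, and the row/column span computations in Section~\ref{S:Snm} confirm this), so there is no component-selection issue in the non-coprime case. The parity condition in Theorem~\ref{T:cov} governs when one $S(n,m)$ covers another, not connectedness of a single $S(n,m)$. The genuine work in the non-coprime case is the function-field computation of the quotient $P/\tilde{\sigma}_2$ carried out in Section~\ref{S:eqns}; once that yields Theorem~\ref{T:eqns}, nothing further is needed.
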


Theorem \ref{T:same} answers a question of Hooper \cite[Question 19]{H}. As a corollary of Theorem \ref{T:same}, we list some of Hooper's results, which were obtained by Hooper using the semiregular polygon decomposition. Let $\Delta^+(n,m,l)\subset PSL(2,\bR)$ be the orientation-preserving part of the group generated by reflections in the sides of a hyperbolic triangle with angles $\frac\pi{n}, \frac\pi{m}, \frac{\pi}{l}$.

The uniformizing group of a Teichm\"uller curve is frequently called its \emph{Veech group}. Definitions of primitivity appear below. 

\begin{cor}\label{C:hoop} Let $n,m>1$ with $nm\geq6$, and set $\gamma=\gcd(n,m)$. 
$\T(n,m)=\T(m,n)$. $\T(n,m)$ is arithmetic if and only if $(m,n)$ or $(n,m)$ is in the set $$\{(2,3),(2,4),(2,6),(3,3),(4,4),(6,6)\}.$$ $\T(n,m)$ is geometrically primitive when it is not arithmetic.  

The uniformizing group of $\T(n,m)$ is: 

\begin{tabular}[t]{ll}
$\Delta^+(n,m,\infty)$ & if $n\neq m$ and $m$ or $n$ is odd;\\
an index two subgroup & if $n\neq m$ are both even;\\
$\Delta^+(2,n,\infty)$ & if $n=m$ is odd;\\
$\Delta^+(n/2,\infty,\infty)$ & if $n=m$ is even. \\
\end{tabular}

The curve $\T(n,m)\subset \M_g$, where the genus is 
\begin{equation*}
g = \begin{cases}
\frac{(n-1)(m-1)}{2}+\frac{1-\gamma}{2} & \text{if $n$ or $m$ odd,}\\
\frac{(n-1)(m-1)}{4}+\frac{3-2\gamma}{4} & \text{if $n$ and $m$ even, and $n/\gamma$ and $m/\gamma$ odd,}\\
\frac{(n-1)(m-1)}{4}+\frac{3-\gamma}{4}  & \text{otherwise.}
\end{cases}
\end{equation*}

In all but the last case, $\T(n,m)$ is generated by an abelian differential with $\gamma$ zeros of equal order; in the last case, there are $\gamma/2$.
\end{cor}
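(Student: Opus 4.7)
The plan is to invoke Theorem \ref{T:same} and translate each assertion of the corollary into a corresponding result for Hooper's semiregular polygon surfaces in \cite{H}. Once the identification of Teichm\"uller curves is established, the symmetry $\T(n,m)=\T(m,n)$ is immediate from the fact that Hooper's construction treats $n$ and $m$ symmetrically: exchanging the two parameters produces an isometric flat surface (up to a $90^\circ$ rotation) and hence the same Teichm\"uller curve.

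For the Veech group, one uses Hooper's explicit cylinder decompositions in two distinguished directions. The moduli of these cylinders give two parabolic generators, and computing their product identifies the group they generate with $\Delta^+(n,m,\infty)$ (or, in the even-even case with $n\neq m$, with the index two subgroup not containing the reflection that swaps the two cylinder decompositions). The four cases listed in the corollary correspond exactly to the cases Hooper distinguishes according to the parities of $n,m$ and whether $n=m$.

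Given the Veech group, arithmeticity reduces to a question about triangle groups and is settled by Takeuchi's classification of arithmetic triangle groups, which singles out the six exceptional pairs. Geometric primitivity in the remaining cases follows because a nontrivial geometric factoring of $\T(n,m)$ through a lower genus Teichm\"uller curve would produce a proper inclusion of its Veech group inside the Veech group of the smaller curve, and no such proper inclusion exists among the non-arithmetic triangle groups $\Delta^+(n,m,\infty)$ arising here.

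Finally, the genus and the number of zeros of a generating abelian differential are read directly off Hooper's semiregular polygon decomposition: counting polygons, edges, and equivalence classes of vertices and applying Euler's formula yields the genus, while the cone angles at vertex classes determine the orders of zeros of the natural form $dz$. The three cases in the genus formula, and the $\gamma$ versus $\gamma/2$ split for the number of zeros, are precisely the parity cases that control how boundary vertices of adjacent polygons are identified. The main obstacle is not mathematical but bookkeeping: one must carefully reconcile our conventions with Hooper's parameters and track the several parity subcases in parallel. The only substantive ingredient beyond \cite{H} is the appeal to Takeuchi's list for arithmeticity and the standard rigidity of triangle groups used in the primitivity step.
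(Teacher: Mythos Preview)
Your overall strategy---invoke Theorem~\ref{T:same} and then read each assertion off Hooper's paper---is exactly what the paper does; its proof is literally a list of citations to \cite{H}. Most of your sketch of how Hooper's arguments go is reasonable, but two points are genuinely wrong.

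First, the arithmeticity step. You appeal to Takeuchi's classification of arithmetic triangle groups, but that is the wrong notion of ``arithmetic.'' Takeuchi classifies triangle groups that are arithmetic as Fuchsian groups (commensurable with unit groups of orders in quaternion algebras); that list contains, e.g., $\Delta^+(2,3,7)$, whose trace field is a cubic field. A Teichm\"uller curve is arithmetic in the sense used here precisely when it is generated by a square-tiled surface, equivalently when the trace field is $\bQ$. The correct argument is that the trace field of $\Delta^+(n,m,\infty)$ is $\bQ$ iff $\cos(\pi/n),\cos(\pi/m)\in\bQ$, i.e.\ $n,m\in\{2,3,4,6\}$, which together with $nm\ge 6$ gives the six listed pairs. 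Invoking Takeuchi would yield a much longer (and incorrect) list.

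Second, your primitivity argument does not work as stated. A translation covering does not produce a \emph{proper} inclusion of Veech groups; the Veech group of a cover is commensurable to that of the base, and indeed may equal it. So ``no proper inclusion among non-arithmetic triangle groups'' does not rule out covers. Hooper's proof of geometric primitivity \cite[Theorem~11]{H} uses the flat geometry of the semiregular polygon surface directly (analyzing holonomy and saddle connections to exclude a nontrivial translation cover), not a group-theoretic rigidity statement of the kind you describe.
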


Some of these results can also be obtained by other means and were obtained by \cite{BM}, but at least when $n$ and $m$ are not relatively prime the only known proof of primitivity and exact calculation of the uniformizing group are due to Hooper.

\begin{figure}[h]\label{F:Semi-reg}
\includegraphics[scale=0.6]{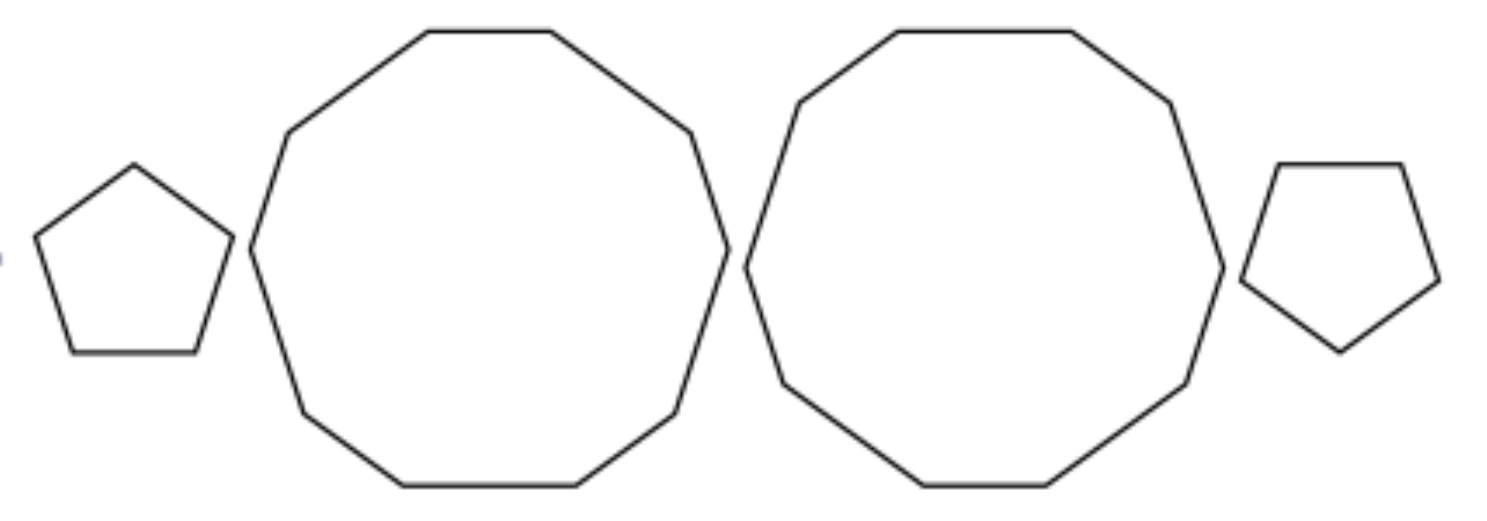}
\caption{An example of the semiregular polygon decomposition discovered by Hooper and Mukamel. Identifying parallel edges we obtain a generator for $\T(4,5)$.}
\end{figure}

\textbf{Lyapunov exponents.} In flat geometry the significance of Lyapunov exponents is twofold: they describe both the
dynamics of the Teichm\"uller geodesic flow on moduli space, and the deviation of ergodic averages for straight line flow on the translation
surface \cite{F}. For background and motivation on Lyapunov exponents in flat geometry, see for example \cite{Fex1,EKZbig}. In parallel to the case of abelian square-tiled surfaces handled in \cite{W1}, at the end of Section \ref{S:BM} we determine all the Lyapunov exponents of $\T(n,m)$, and clarify the relationship to the Schwarz triangles used to describe the period mappings. Previously the Lyapunov exponents were given by Bouw-M\"oller when $n$ and $m$ are odd and relatively prime. A corollary of our computation is 

\begin{cor}\label{C:Lmults}
The Lyapunov spectrum of $\T(n,m)$ consists of nonzero multiples of $\frac{\gcd(n,m)}{nm-n-m}$. In particular, there are never any zero Lyapunov. 
\end{cor}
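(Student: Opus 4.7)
The plan is to derive Corollary \ref{C:Lmults} directly from the explicit list of all Lyapunov exponents of $\T(n,m)$ produced at the end of Section \ref{S:BM}. That computation, obtained in parallel with the treatment of abelian square-tiled surfaces in \cite{W1}, expresses each Lyapunov exponent as a quantity built from the angles of a Schwarz triangle attached to a character of the deck group of the abelian cover $S(n,m)\to\CP$. So the corollary reduces to two arithmetic assertions about those angles: every exponent is an integer multiple of $\gcd(n,m)/(nm-n-m)$, and no exponent is zero.

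First I would recall from Section \ref{S:BM} that each nonzero exponent has the form $\lambda=\left|1-\alpha/\pi-\beta/\pi\right|$, where $\alpha$ and $\beta$ are the two non-ideal angles of the Schwarz triangle associated to a character, so that $\alpha=2\pi a/n$ and $\beta=2\pi b/m$ for integers $a,b$ with $\gcd(a,n)=\gcd(b,m)=1$. Clearing denominators gives
\[
\lambda=\frac{|nm-2am-2bn|}{nm}=\frac{\gcd(n,m)}{nm-n-m}\cdot\frac{(nm-n-m)\,|nm-2am-2bn|}{nm\,\gcd(n,m)}.
\]
The ``multiple'' claim is then the statement that the right-hand factor $N_{a,b}$ is a positive integer for every admissible $(a,b)$. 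This is a finite divisibility check: writing $d=\gcd(n,m)$ and factoring $n=dn'$, $m=dm'$ with $\gcd(n',m')=1$, the identity $nm-n-m=d(dn'm'-n'-m')$ combined with $\gcd(a,n)=\gcd(b,m)=1$ forces the denominator to cancel against the numerator, once one uses the constraint on $(a,b)$ inherited from the character table of Section \ref{S:BM}.

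Second, I would establish the non-vanishing assertion by ruling out the equation $nm=2am+2bn$ for any admissible $(a,b)$. Since $\gcd(a,n)=1$, dividing the equation by $\gcd(2,n)$ forces $n\mid 2m$, and symmetrically $m\mid 2n$; hence $m/n\in\{1/2,1,2\}$. A short case analysis handles each possibility, using the parity restriction in Theorem \ref{T:cov} and Corollary \ref{C:hoop} that distinguishes which characters actually contribute in the even-even case: in each case the admissibility of $(a,b)$ (coprimality together with the allowed range of $a,b$) contradicts the putative equation.

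The main obstacle will be the non-vanishing step, not the identification of the common denominator. The denominator simplification is essentially dictated by the hyperbolic area $\pi(nm-n-m)/(nm)$ of the Schwarz triangle and by the Eskin-Kontsevich-Zorich sum formula. Ruling out zero exponents, however, depends on the precise (and somewhat case-dependent) description of which characters of the deck group survive in the Hodge bundle of the quotient $\T(n,m)$; once that description from Section \ref{S:BM} is in hand, both steps become essentially arithmetic.
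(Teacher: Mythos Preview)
Your proposal has several concrete errors that make the argument break down, while the paper's route is essentially a one-line observation.

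First, the formula you quote for the Lyapunov exponent is missing the normalization. Theorem~\ref{T:BMlyaps} says that $\lambda$ is the \emph{ratio} of the area of the Schwarz triangle to the area of the $(0,\pi/m,\pi/n)$ triangle, i.e.
\[
\lambda \;=\; \frac{1-\mu-\nu}{\,1-\tfrac1n-\tfrac1m\,}
\;=\; \frac{nm\,(1-\mu-\nu)}{\,nm-n-m\,},
\]
not $|1-\mu-\nu|$ alone. This is why your displayed expression has an extra factor of $nm$ in the denominator and forces you into a spurious divisibility check.

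Second, your parametrization of the angles is incorrect on two counts. From Proposition~\ref{P:periods} the non-ideal angles are $\mu\pi\in\tfrac{\pi}{m}\bZ$ and $\nu\pi\in\tfrac{\pi}{n}\bZ$, so $\mu=a/m$ and $\nu=b/n$ with integers $a,b$; there is no factor of $2$, and there is no coprimality. For instance $\T(3,9)$ contributes the triangle $(0,\tfrac13,\tfrac13)$, where $\mu=3/9$ and $\gcd(3,9)\neq 1$; see the tables in Figure~\ref{F:tables}. Your subsequent case analysis (dividing $nm=2am+2bn$ and invoking $\gcd(a,n)=1$) therefore rests on hypotheses that do not hold.

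With the correct formula both assertions are immediate. Writing $\mu=a/m$, $\nu=b/n$ gives
\[
\lambda \;=\; \frac{nm-na-mb}{\,nm-n-m\,},
\]
and since $\gcd(n,m)$ divides each of $nm$, $na$, $mb$, the numerator lies in $\gcd(n,m)\,\bZ$. For non-vanishing, Theorem~\ref{T:BMlyaps} already expresses $\lambda$ as $2\min_j\{t_j(-\br),1-t_j(-\br)\}$ divided by a positive constant, and each $t_j(-\br)\in(0,1)$ because the contributing $\br$ have no zero entries (equivalently, the Schwarz triangle has positive hyperbolic area). No case analysis, no coprimality, and no appeal to Theorem~\ref{T:cov} or Corollary~\ref{C:hoop} is needed; this is why the paper simply says the corollary follows immediately.
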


The Lyapunov exponents however often are not all distinct. See for example the tables in figure \ref{F:tables}. 

\textbf{Primitivity.} A Teichm\"uller curve is called \emph{(geometrically) primitive} if it does not arise from a translation covering construction. A Teichm\"uller curve in $\M_g$ is called \emph{algebraically primitive} when the trace field of the uniformizing  group has degree $g$ over $\bQ$. This is exactly the case when there is real multiplication on the Jacobian, instead of only a factor of the Jacobian. The $\T(n,m)$ are always geometrically primitive but are usually not algebraically primitive.

\begin{thm}\label{T:ap}
Assuming the Teichm\"uller curve $\T(n,m)$ is not arithmetic, it is algebraically primitive if and only if one of $n,m$ is $2$ and the other is a prime, twice a prime, or a power of two. 
\end{thm}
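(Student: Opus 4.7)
The plan is to invoke Corollary \ref{C:hoop}, which supplies both the genus $g$ and the uniformizing Fuchsian group of every non-arithmetic $\T(n,m)$; algebraic primitivity then reduces to an explicit comparison between $g$ and the degree over $\bQ$ of the trace field of that uniformizing group.

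By the symmetry $\T(n,m)=\T(m,n)$, I may assume $n\leq m$. The first step rules out $n\geq 3$. Corollary \ref{C:hoop} identifies the uniformizing group with $\Delta^+(n,m,\infty)$ (or, when $n=m$, with $\Delta^+(2,n,\infty)$ or $\Delta^+(n/2,\infty,\infty)$; or, when $n\neq m$ are both even, with an index-two subgroup of $\Delta^+(n,m,\infty)$). In every subcase the trace field is contained in $\bQ(\cos(\pi/n),\cos(\pi/m))$, of degree at most $\phi(2n)\phi(2m)/4$ over $\bQ$. The genus formula grows at least like $(n-1)(m-1)/4$, and the inequality $\phi(k)\leq k-1$ (with equality iff $k$ is prime) then forces the trace-field degree to be strictly less than $g$ for all $n\geq 3$ outside the arithmetic exceptions.

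The second step handles $n=2$. If $m$ is odd, the uniformizing group is $\Delta^+(2,m,\infty)$ with trace field $\bQ(\cos(\pi/m))$ of degree $\phi(2m)/2=\phi(m)/2$; comparing to $g=(m-1)/2$ yields equality iff $\phi(m)=m-1$, i.e.\ iff $m$ is prime. If $m$ is even, I identify the index-two uniformizing subgroup with $\Delta^+(m/2,\infty,\infty)$, whose trace field $\bQ(\cos(2\pi/m))$ has degree $\phi(m)/2$. When $m\equiv 2\pmod 4$, writing $m=2k$ with $k$ odd, $g=(k-1)/2$ and equality is equivalent to $k$ being prime, i.e.\ $m$ being twice an odd prime. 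When $m\equiv 0\pmod 4$, $g=m/4$ and equality is equivalent to $\phi(m)=m/2$, which holds exactly when $m$ is a power of two.

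The main obstacle is the both-even subcase of step one: Corollary \ref{C:hoop} only describes the uniformizing group there as an unspecified index-two subgroup of $\Delta^+(n,m,\infty)$, so no exact trace-field computation is immediately available. Since only an upper bound on the degree is needed to exclude algebraic primitivity, it suffices to use the general fact that a subgroup has trace field contained in that of the ambient group. Combining both steps and excising the three arithmetic pairs listed in Corollary \ref{C:hoop} recovers the theorem.
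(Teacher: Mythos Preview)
Your approach is different from the paper's: the paper first uses a covering obstruction (Proposition in \S\ref{SS:notap}) to rule out most $(n,m)$ with $n\geq 3$, reducing to a short list of cases, and only then compares exact trace-field degrees (Propositions~\ref{P:tracef}, \ref{P:invtracef}) with the genus formulas. You attempt a direct degree-versus-genus comparison throughout, and this is where the argument breaks.

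\textbf{Step 1 fails as written.} The upper bound $\phi(2n)\phi(2m)/4$ for $[\bQ(\cos\tfrac{\pi}{n},\cos\tfrac{\pi}{m}):\bQ]$ is not always below the genus. For $(n,m)=(4,8)$ your bound is $\phi(8)\phi(16)/4=8$, while $g=5$; for $(n,m)=(8,8)$ the bound is $16$ and $g=9$; for $(n,m)=(4,6)$ the bound equals the genus ($=4$). In each case the \emph{actual} trace-field degree is smaller (respectively $4$, $2$, $2$), so the conclusion is true, but your stated inequality does not establish it. The culprit is that $\phi(2k)=k$ when $k$ is a power of $2$, so the slogan ``$\phi(k)\leq k-1$'' does not translate into $\phi(2n)\leq n-1$. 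The paper's covering obstruction sidesteps exactly these power-of-two pairs: e.g.\ $S(4,8)$ covers $S(2,4)$, producing a second rank-two subbundle with discrete monodromy and hence ruling out algebraic primitivity via \cite{M}.

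\textbf{Step 2 has an unjustified identification.} For $n=2$ and $m$ even you assert that the index-two Veech group equals $\Delta^+(m/2,\infty,\infty)$, but Corollary~\ref{C:hoop} does not say which index-two subgroup arises; you flag this obstacle only for Step~1, yet here you need the \emph{exact} trace field (to prove primitivity, not just exclude it), so an upper bound will not do. The clean fix is to use the Kenyon--Smillie result that the trace field of a Veech group coincides with its invariant trace field, together with the commensurability invariance of the latter: this gives trace field $\bQ(\cos\tfrac{2\pi}{m})$ of degree $\phi(m)/2$ directly from the invariant trace field of $\Delta^+(2,m,\infty)$ (cf.\ Proposition~\ref{P:invtracef}), without identifying the subgroup. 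With that in hand your numerics in Step~2 are correct.
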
 
 
See \cite{E} and \cite{CM} for a summary of the very few known families of curves with real multiplication, and known curves with complex multiplication. Any cone point of an algebraically primitive Teichm\"uller curve has complex multiplication. 
 
%
%
%

\textbf{Notes and references.} There are only very few examples of primitive Teichm\"uller curves known. They are the Prym curves in genus 2, 3 and 4 \cite{Ca, Mc, Mc4}; the Veech-Ward-Bouw-M\"oller curves \cite{BM}; and two sporadic examples, one due to Vorobets in $H(6)$, and another due to Kenyon-Smillie in $H(1,3)$ \cite{HS, KS}. These sporadic examples correspond to billiards in the  $(\pi/5, \pi/3, 7\pi/15)$ and $(2\pi/9, \pi/3, 4\pi/9)$ triangles respectively, and both Teichm\"uller curves are algebraically primitive. 

Teichm\"uller curves generated by abelian differentials are classified in $\M_2$, and there are some finiteness results in higher genus \cite{Bam, M3}, but classification even in $\M_3$ appears difficult.

The Bouw-M\"oller construction is quite novel, and originally used M\"oller's characterization of Teichm\"uller curves involving maximally Higgs bundles. Our proof that $\T(n,m)$ is isometrically immersed is a simplification of theirs; our contributions are to avoid M\"oller's characterization, and to use Schwarz triangle mappings, which are not used in \cite{BM} but allow for a geometric understanding. We also ground the arguments in more elementary language, and point out the connection to the geometry and combinatorics of square-tiled surfaces.

For those results that were previously only known in some cases (for example, $n$ and $m$ relatively prime), no new ideas are required to extend the results to all cases. Our contribution here is to use notation which avoids the case distinctions which pervade \cite{BM}. This being said, often we do not use the methods of \cite{BM} to obtain these results, preferring new approaches of a more geometric flavor.

Veech and Ward gave flat geometry proofs that $\T(n,m)$ is a Teichm\"uller curve in the case $n=2,3$. In light of Theorem \ref{T:same}, Hooper has done the same for all $n$ and $m$. These flat geometry proofs are more elementary than the proof we present, but the proof we present also has a number of advantages. First and foremost, our proof is closer to how Bouw and M\"oller \emph{discovered} $\T(n,m)$ in the first place, and it is gratifying to understand their leap of intuition that cyclic or abelian covers of $\CP$ might be the building blocks for some Teichm\"uller curves uniformized by triangle groups. Second, it allows for the computation of Lyapunov exponents, and an understanding of the period map and monodromy. Third, it allowed us to discover many of the results in this paper.    

We use a number of results from \cite{W1}, where we have developed the theory of abelian square-tiled surfaces. Most readers wishing to understand all the details of this paper will wish to consult this source.  

\textbf{Table of Schwarz triangles.} The reader is encouraged to study figure \ref{F:tables}, where a description of the period map of many $\T(n,m)$ is presented. Many results in this paper are reflected in these tables. 

\textbf{Acknowledgements.}  This research was supported in part by the National Science and Engineering Research Council of Canada, and was partially conducted during the Hausdorff Institute's trimester program ``Geometry and dynamics of Teichm\"uller space."  
The author thanks Alex Eskin, Howard Masur, Martin M\"oller, and Anton Zorich for their instruction and encouragement, and Matt Bainbridge, Irene Bouw, Jordan Ellenberg and Madhav Nori for helpful and interesting discussions. The author is grateful to Anton Zorich and Pat Hooper for allowing us to reproduce figures, and Jennifer Wilson for producing figures.


\section{Key ideas for the study of $\T(n,m)$}\label{S:ideas}

Here we give the main ingredients in the proof that $\T(n,m)$ is a Teichm\"uller curve, and hint at the proof. This section is intended as an extension of the introduction.  

\subsection{Exceptionally symmetry.} In the notation of \cite{W1}, $S(n,m)$ is defined as $$M_{2nm}\left(\begin{array}{cccc}
nm-n-m & nm+n-m & nm+n+m & nm-n+m \\
nm+n-m & nm-n-m & nm-n+m & nm+n+m\\
\end{array}\right).$$

That is, given $w_1, w_2$ in the algebraic closure of $\bC(z)$ such that 
\begin{eqnarray*}
w_1^{2nm}&=&z^{nm-n-m} (z-1)^{nm+n-m} (z-\lambda)^{nm+n+m},\\
w_2^{2nm}&=&z^{nm+n-m} (z-1)^{nm-n-m} (z-\lambda)^{nm-n+m},
\end{eqnarray*}
$S(n,m)$ is defined to be the cover of $\CP$ with function field $\bC(z)[w_1,w_2]$. The base $\CP$ has function field $\bC(z)$. The dependance on $\lambda$ is suppressed in this notation. The natural flat structure on $\CP$ with singularities at $0,1,\lambda, \infty$ may be lifted to $S(n,m)$, giving it the structure of a parallelogram-tiled surface.

We begin with the exceptional symmetry of $S(n,m)$, which is visible in the flat geometry. Denote by $\sigma_j$ the pillowcase symmetry that sends $z_1$ to $z_j$. Covering space theory guarantees that each involution $\sigma_j$ can be lifted to an involution $\tilde{\sigma_j}$ of $S(n,m)$. However, we will require \emph{commuting lifts} (a lift of pillowcase symmetry \emph{group} $\bZ_2\times \bZ_2$), which moreover have special properties.

\begin{figure}[h]\label{F:FlatPillow}
\includegraphics[scale=0.17]{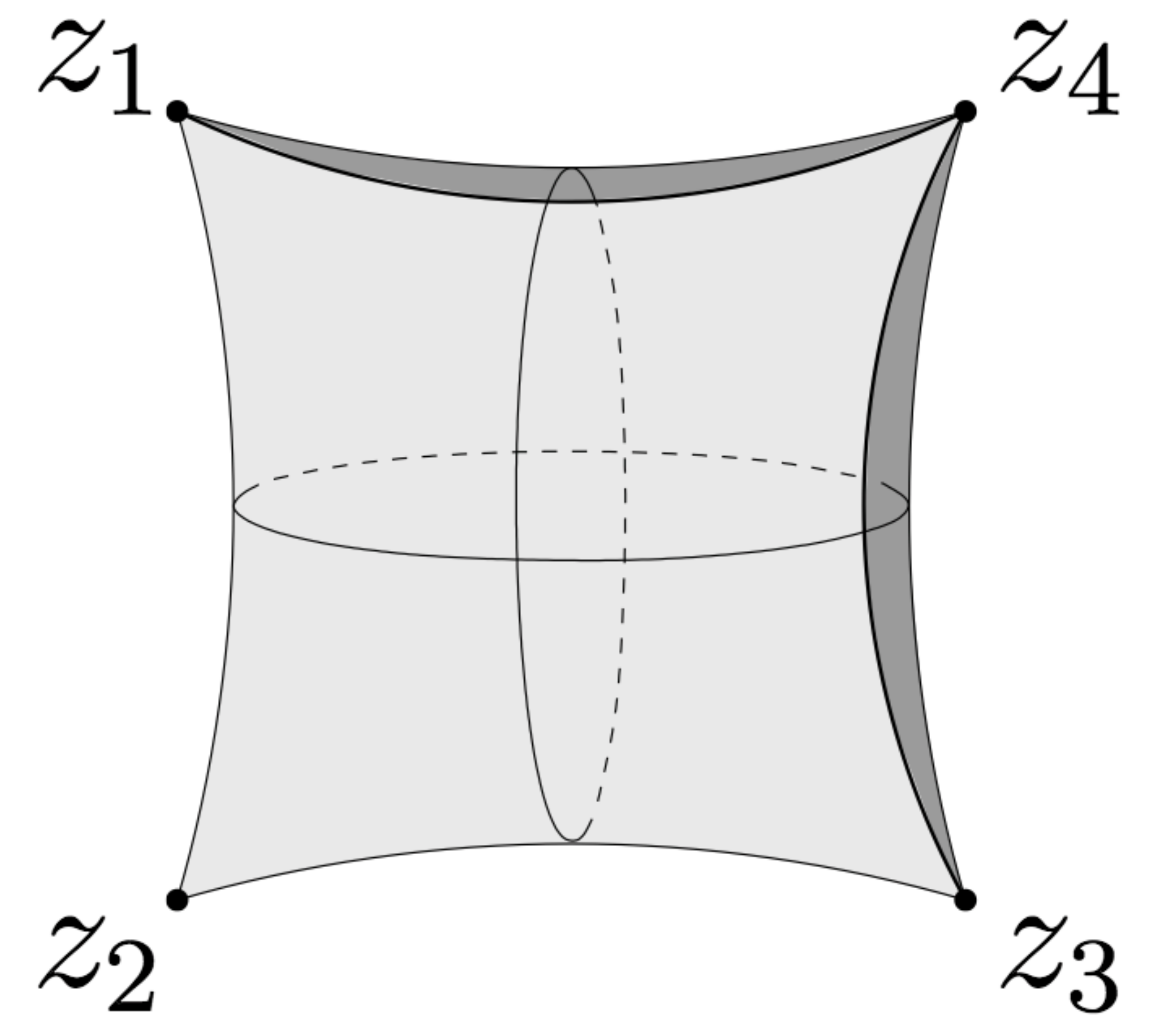}
\caption{The flat pillowcase.}
\end{figure}

Let $\alpha_i$ be the oriented loop about $z_i$. In the standard square-tiled metric, $\alpha_4\circ\alpha_1$ is the core curve of the horizontal cylinder, and $\alpha_2\circ \alpha_1$ is the core curve of the vertical cylinder. So, ``lifts" of $\alpha_4\circ \alpha_1$ to the square-tiled $S(n,m)$ are the core curves of the horizontal cylinders on $S(n,m)$. (We will also use parallelogram-tiled $S(n,m)$, where this language does not apply. By a ``lift" of $\alpha_4\circ \alpha_1$ we mean any unoriented simple closed curve that projects to a multiple of $\alpha_4\circ \alpha_1$.)  

\begin{prop}\label{P:kleinlift}
Up to the action of the deck group, $S(n,m)$ has a unique lift $\langle \tilde{\sigma_2},  \tilde{\sigma_4}\rangle $ of the pillowcase symmetry group so that both $\tilde{\sigma_2}$ and  $\tilde{\sigma_4}$ each have at least one fixed point, unless $n$ and $m$ are both even, in which case there are two such lifts.

In any such lift, the involution $\tilde{\sigma_2}$  maps each lift of the unoriented curve $\alpha_4\circ\alpha_1$ to itself, and the involution $\tilde{\sigma_4}$ maps each lift of $\alpha_2\circ \alpha_1$ to itself.
\end{prop}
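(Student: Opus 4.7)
The plan is to realize the Klein four group lift via explicit automorphisms of the function field $\bC(z)[w_1, w_2]$ defining $S(n,m)$, then apply standard covering-theoretic arguments to count the lifts and analyze their fixed-point structure. Throughout I view $S(n,m) \to \CP$ as an abelian Galois cover with deck group $G$ (a specific quotient of $(\bZ/2nm)^2$ read directly off the exponent matrix); lifts of a single pillowcase symmetry $\sigma \in K$ form a $G$-torsor $L_\sigma$, and a lift of the full Klein four group $K = \bZ_2 \times \bZ_2$ corresponds to a splitting of the extension $1 \to G \to \tilde{K} \to K \to 1$.

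First, I would exhibit explicit lifts of each $\sigma_j$ by exploiting the symmetry of the exponent matrix. Writing its two rows as $\mathbf{a}, \mathbf{b}$, the permutation $(12)(34)$ associated to $\sigma_2$ interchanges $\mathbf{a}$ and $\mathbf{b}$, while the permutation $(14)(23)$ of $\sigma_4$ sends $\mathbf{a} \mapsto -\mathbf{b}$ and $\mathbf{b} \mapsto -\mathbf{a}$ modulo $2nm$. This yields automorphisms of the form $\tilde{\sigma_2}: (z, w_1, w_2) \mapsto (\sigma_2(z), \mu_2(z) w_2, \nu_2(z) w_1)$ and $\tilde{\sigma_4}: (z, w_1, w_2) \mapsto (\sigma_4(z), \mu_4(z)/w_2, \nu_4(z)/w_1)$ for explicit rational functions $\mu_j, \nu_j$ determined by the defining equations up to the action of $G$.

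Next, I would impose the involution, commutativity, and fixed-point conditions. A fixed point of $\tilde{\sigma_j}$ must lie over one of the two fixed points of $\sigma_j$ on $\CP$; by a standard local analysis and cohomological computation, the involutive lifts of $\sigma_j$ with a fixed point form a coset of the subgroup $(1 - (\sigma_j)_*)(G) \subseteq G$ inside the torsor $L_{\sigma_j}$, where $(\sigma_j)_*$ denotes the induced action on $G$. Conjugation by $G$ (which translates by exactly this subgroup) then yields a unique such lift of each $\sigma_j$ individually. The further commutativity condition on $\tilde{\sigma_2}$ and $\tilde{\sigma_4}$ is a single equation in $G$ whose solvability generically produces one class modulo conjugation, and two classes when both $n$ and $m$ are even---the extra solution coming from additional $2$-torsion in $G^{(\sigma_2)_*} \cap G^{(\sigma_4)_*}$ present only in this case. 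This counting, especially the even-even exception, is the main technical obstacle, since the parities of $n, m$ control the relevant $2$-torsion subgroups in a delicate way that requires careful bookkeeping.

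For the assertions about $\alpha_4 \circ \alpha_1$ and $\alpha_2 \circ \alpha_1$, I would observe that on $\CP$ both fixed points of $\sigma_2$ lie on the isotopy class of $\alpha_4 \circ \alpha_1$ (they are the midpoints of the ``equator'' between the $\sigma_2$-swapped pairs $\{z_1, z_2\}$ and $\{z_3, z_4\}$), so every fixed point of $\tilde{\sigma_2}$ lies on a lift $L$ of $\alpha_4 \circ \alpha_1$; since $\tilde{\sigma_2}$ preserves the unoriented isotopy class of the loop downstairs and fixes a point of $L$, it must preserve $L$ setwise. To extend this to every lift, I would compute the monodromy $h$ of $\alpha_4 \circ \alpha_1$ in $G$ and verify $(1 - (\sigma_2)_*)(G) \subseteq \langle h \rangle$; this ensures that the permutation induced by $\tilde{\sigma_2}$ on the coset space $G/\langle h \rangle$ parameterizing lifts is trivial. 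The argument for $\tilde{\sigma_4}$ and $\alpha_2 \circ \alpha_1$ is completely symmetric.
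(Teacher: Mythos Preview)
Your outline is a reasonable alternative route, but it differs substantially from the paper's argument and glosses over one subtlety that the paper handles explicitly.

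The paper does not work with function-field automorphisms or cohomological counting at all. Instead it uses the combinatorial model of $M_N(A)$ from \cite{W1}: squares are labeled by pairs $\left(\begin{smallmatrix}c_1\\c_2\end{smallmatrix}\right)_{w/b}$ in the column span, deck transformations act by translation, and the lifts $\tilde{\sigma_2},\tilde{\sigma_4}$ are written down as explicit permutations of these labeled squares. The commutation relations $\tilde{\sigma_2} T_1 \tilde{\sigma_2}=T_2$, etc., are deduced from the fixed-point hypothesis, and from these one reads off that $\tilde{\sigma_2}$ acts by $\left(\begin{smallmatrix}c_1\\c_2\end{smallmatrix}\right)\mapsto\left(\begin{smallmatrix}c_2\\c_1\end{smallmatrix}\right)$ (with a color swap) and similarly for $\tilde{\sigma_4}$. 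The count of lifts then reduces to an explicit computation with the column span of $S(n,m)$, where the even/even exception appears because $\left(\begin{smallmatrix}m\\m\end{smallmatrix}\right)$ is a deck transformation only when $n$ or $m$ is odd. A key structural point you miss: the paper shows that any two fixed-point lifts $\tilde{\sigma_2},\tilde{\sigma_4}$ \emph{automatically} commute, so commutativity is not an extra constraint to be solved but a consequence verified at the end by direct computation with the formulas. This is why the count is governed purely by the normalization of $\tilde{\sigma_4}$ modulo $G^{(\sigma_2)_*}$.

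One genuine gap in your sketch: you assert that the involutive lifts of $\sigma_j$ with a fixed point form a \emph{single} coset of $(1-(\sigma_j)_*)(G)$, hence a unique conjugacy class. But $\sigma_j$ has two fixed points on $\CP$, and a priori the lifts fixing a point over one need not coincide with those fixing a point over the other; you get one coset per base fixed point, and these could differ. The paper finesses this by using $\tilde{\sigma_4}$ to transport a fixed point of $\tilde{\sigma_2}$ from the $12$ edge to the $34$ edge (since commuting involutions preserve each other's fixed sets), thereby reducing to a single base fixed point before normalizing. Your cohomological framework would need an analogous step. Your argument for the cylinder statement via $(1-(\sigma_2)_*)(G)\subseteq\langle T_1T_4\rangle$ is correct in spirit and is essentially a repackaging of the paper's direct check that $\tilde{\sigma_2}\left(\begin{smallmatrix}r_1\\r_2\end{smallmatrix}\right)_w$ and $\left(\begin{smallmatrix}r_1\\r_2\end{smallmatrix}\right)_b$ differ by a power of $T_1T_4$.
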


It is not at all obvious that Proposition \ref{P:kleinlift} should be true. The proof is straightforward and unenlightening, and  is deferred to Section \ref{S:Snm}. 

\subsection{Schwarz triangle mappings.} Recall that $S(n,m)$ is branched over $z_1=0, z_2=1, z_3=\lambda, z_4=\infty$. By varying $\lambda$, we obtain a family $\pi:\S(n,m)\to B_0$ over the base $B_0=\CP\setminus\{0,1,\infty\}\ni \lambda$. The families $\S(n,m)$ are chosen so that the following result holds. By the \emph{row span} of $S(n,m)$, we mean the abelian subgroup of $(\bZ/(2nm\bZ))^4$ generated by the rows of the matrix
\begin{eqnarray*}
\left(\begin{array}{cccc}
nm-n-m & nm+n-m & nm+n+m & nm-n+m \\
nm+n-m & nm-n-m & nm-n+m & nm+n+m\\
\end{array}\right),
\end{eqnarray*}
whose entries are modulo $2nm$. (In \cite{W1}, we saw that this row span is in bijection to a basis of the function field of $S(n,m)$ over $\bC(z)$, which is why its use is pervasive). For $$\br=(r_1, r_2, r_3, r_4)$$ in the row span of $A$, we defined $$t_j(\br)=\{\frac{r_j}{2nm}\} \quad\text{and}\quad t(\br)=\sum_{j=1}^4 t_j(\br),$$ where $\{\cdot\}$ denotes fractional part.

\begin{prop}\label{P:periods}
Consider the bundle $H^1$ over $B_0$ whose fiber over $\lambda$ is $H^1(\pi^{-1}(\lambda), \bC)$. There exists a direct sum decomposition of this bundle $$H^1=\bigoplus_\br H^1(\br)$$ with the following properties.  The summation is over $\br$ in the row span of $S(n,m)$. The subbundle $H^1(\br)$ is nonzero if and only if $t(\br)=2=t(-\br)$.

Each nonzero $H^1(\br)$ is a flat rank two subbundle whose $(1,0)$ and $(0,1)$ parts each have dimension $1$.  Set  $t_i=t_i(-\br)$  and $$\kappa=|1-t_1-t_3|=0,\quad
\mu=|1-t_2-t_3|\in \frac{1}{m}\bZ,\quad
\nu=|1-t_1-t_2|\in \frac{1}{n} \bZ.$$

The $(1,0)$ part of each $H^1(\br)$ admits a global section $\omega_\lambda$, and homology classes $\alpha, \beta$ may be chosen so that the period mapping 
$$p(\lambda)=\frac{\int_\alpha \omega_\lambda}{\int_\beta \omega_\lambda}$$
is a Schwarz triangle mapping which maps $\bH\subset B_0$ to a hyperbolic triangle with angles $\kappa\pi, \mu\pi, \nu\pi$ at $p(0), p(1), p(\infty)$ respectively. 
\end{prop}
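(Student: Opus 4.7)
I would approach this by using the abelian deck group of $S(n,m)\to\CP$ to decompose $H^1$ into character eigenspaces indexed by elements of the row span, identify each rank-two eigenspace with the local system of a Gauss hypergeometric equation via explicit Euler integrals, and invoke the classical Schwarz reflection theorem to exhibit the period ratio as a Schwarz triangle mapping with the stated angles.

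The first step is the flat isotypic decomposition. Because the deck group acts fiberwise on the family $\S(n,m)\to B_0$, it commutes with the Gauss-Manin connection and preserves the Hodge filtration. Therefore $H^1=\bigoplus_\br H^1(\br)$ as a flat bundle, and each summand inherits a Hodge decomposition. The Chevalley-Weil-type dimension count developed in \cite{W1} gives $\dim H^{1,0}(\br)=\max(0,t(\br)-1)$, and complex conjugation identifies $H^{0,1}(\br)$ with the conjugate of $H^{1,0}(-\br)$. Combining these, $H^1(\br)$ has rank two with one-dimensional Hodge pieces exactly when $t(\br)=t(-\br)=2$, proving the first half of the proposition.

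For the period mapping I would exhibit an explicit holomorphic section. With $t_i=t_i(-\br)$, the multi-valued one-form
$$\omega_\lambda=z^{t_1-1}(z-1)^{t_2-1}(z-\lambda)^{t_3-1}\,dz$$
has monodromy matching the character $-\br$, so it lifts to a single-valued differential on $S(n,m)$; the conditions $t_j>0$ (all entries of $\br$ nonzero) and $t(-\br)=2$ ensure holomorphy at the ramified preimages of $0,1,\lambda$ and at $\infty$ respectively. Taking a Pochhammer basis $\alpha,\beta$ of the $\br$-isotypic part of $H_1$, the integrals $\int_\alpha\omega_\lambda,\int_\beta\omega_\lambda$ are classical Euler integral representations of two linearly independent solutions of the Gauss hypergeometric equation in $\lambda$, whose local exponent differences at $0,1,\infty$ are the three quantities $|1-t_i-t_j|$ appearing in the statement. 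By Schwarz's theorem the ratio $p(\lambda)$ is a biholomorphism from $\bH$ onto a triangle with vertex angles $\pi$ times these exponent differences at $p(0),p(1),p(\infty)$.

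The numerical angle claims are then read off directly from the defining matrix $A$. In both rows of $A$ the entries in columns $1$ and $3$ sum to $2nm$, so every $\br$ in the row span has $r_1+r_3\equiv 0\pmod{2nm}$, forcing $t_1+t_3\in\bZ$ and hence $\kappa=0$. The corresponding integer combinations of the column pairs $(2,3)$ and $(1,2)$ reduce modulo $2nm$ to multiples of $2n$ and $2m$ respectively, yielding $\mu\in\frac1m\bZ$ and $\nu\in\frac1n\bZ$. The main obstacle is not any single step but the consistent bookkeeping: matching row-span indices to deck-group characters, picking a cycle basis compatible with the integer structure on $H^1(\br)$, and correctly identifying which pair of colliding branch points corresponds to which vertex of the triangle. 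Everything else follows from classical hypergeometric theory together with the character-decomposition machinery developed in \cite{W1}.
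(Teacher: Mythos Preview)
Your proposal is correct and is essentially the same approach the paper takes: the paper's entire proof is the sentence ``This follows directly from \cite[Propositions \ref{P:dim} and \ref{P:pmap}]{W1},'' and your outline (deck-group isotypic decomposition, the Chevalley--Weil dimension formula $\dim H^{1,0}(\br)=t(\br)-1$, the explicit eigenform $z^{t_1-1}(z-1)^{t_2-1}(z-\lambda)^{t_3-1}dz$, and the classical hypergeometric/Schwarz theory) is exactly the content of those cited propositions. Your verification of the angle constraints from the column sums of $A$ is also the computation implicit in the paper's setup.
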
  

This follows directly from \cite[Propositions \ref{P:dim} and \ref{P:pmap}]{W1}. We always use (co)homology with coefficients in $\bC$ (not $\bR$). 

The relationship between Propositions \ref{P:kleinlift} and \ref{P:periods} is given by the following lemma, whose proof is also deferred to Section \ref{S:Snm} following the proof of Proposition \ref{P:kleinlift}.

\begin{lem}\label{L:action}
The Klein group action on the $H^1(\br)$ is given by
$$\tilde{\sigma_2} H^1(r_1, r_2, r_3, r_4)= H^1(r_2, r_1, r_4, r_3),$$  $$\tilde{\sigma_4} H^1(r_1, r_2, r_3, r_4) = H^1(r_4, r_3, r_2, r_1).$$
When $\tilde{\sigma}H^1(\br)=H^1(\br)$ for some involution $\tilde{\sigma} \in \bZ_2\times \bZ_2$, then some non-trivial involution in $\bZ_2\times \bZ_2$ acts by negation on $H^1(\br)$. 
\end{lem}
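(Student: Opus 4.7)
Proof Plan:

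The plan has two parts. For the two permutation formulas in the first assertion, I appeal to the general principle that any lift $\tilde\sigma$ of a Möbius symmetry $\sigma$ of $(\CP,\{z_1,\ldots,z_4\})$ conjugates the deck group $G$ of $S(n,m)\to\CP$ by the permutation $\tau$ of small loops $\gamma_j$ around the branch points that matches $\sigma$'s permutation of the $z_j$. Under the identification of characters of $G$ with elements $\br=(r_1,\ldots,r_4)$ in the row span (where $r_j$ encodes the character's value on $\gamma_j$), this conjugation induces the rule $\tilde\sigma H^1(\br)=H^1(r_{\tau(1)},\ldots,r_{\tau(4)})$. For $\sigma_2$ the permutation is $(12)(34)$, yielding the first formula, and for $\sigma_4$ it is $(14)(23)$, yielding the second; in each case one also verifies (using the explicit matrix for $S(n,m)$) that the permuted tuple still lies in the row span.

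For the negation statement, let $\tilde K_\br\subseteq\bZ_2\times\bZ_2$ be the subgroup stabilizing $H^1(\br)$. Since the action of $\tilde K_\br$ on the rank-two $H^1(\br)$ is through commuting holomorphic involutions preserving the $1{+}1$ Hodge decomposition, it splits as a direct sum of two $\{\pm 1\}$-valued characters $\chi$ on $H^{1,0}(\br)$ and $\psi$ on $H^{0,1}(\br)$. The existence of a negating involution is equivalent to exhibiting a non-trivial $\tilde\sigma\in\tilde K_\br$ with $\chi(\tilde\sigma)=\psi(\tilde\sigma)=-1$; a short group-theoretic case analysis on the possible images of $\tilde K_\br$ in $\{\pm 1\}^2$ reduces this to showing that $\chi$ and $\psi$ agree as characters of $\tilde K_\br$ and that their common value is not the trivial character. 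My approach is then to compute $\chi$ and $\psi$ directly using the explicit generator $\omega_\br$ of $H^{1,0}(\br)$ from \cite{W1} (of the form $z^{a_1}(z-1)^{a_2}(z-\lambda)^{a_3}w_1^{b_1}w_2^{b_2}\,dz$ with exponents determined by $\br$): pulling back by $\tilde\sigma_i$ is a substitution $z\mapsto\sigma_i(z)$ with an induced monomial action on $w_1, w_2$, and reading off the resulting scalar gives $\chi$. The value of $\psi$ is obtained either by the same computation applied to $\overline{\omega_{-\br}}\in H^{0,1}(\br)$ or by exploiting the duality $H^{0,1}(\br)\cong H^{1,0}(\br)^\vee$ coming from the intersection pairing; the condition $t(\br)=2=t(-\br)$ is what guarantees that both pieces are nonzero and makes this comparison possible.

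The main technical obstacle is tracking the branches of $w_1, w_2$ under the Möbius substitution $z\mapsto\sigma_i(z)$, since different choices of branch correspond to different representatives of $\tilde\sigma_i$ within the coset $\tilde\sigma_i G$ and the sign of the pullback depends sensitively on this choice. The fixed-point condition from Proposition \ref{P:kleinlift} normalizes this choice; the exceptional case when both $n$ and $m$ are even must be treated separately, since Proposition \ref{P:kleinlift} then produces two Klein four lifts, but either choice leads to a negating non-trivial element under the standing constraint on $t(\br)$. Once the branch data is controlled, the sign computation collapses to a mechanical check of parity identities in the row span.
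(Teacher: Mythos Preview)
Your treatment of the permutation formulas matches the paper's: both derive them from the commutation relations (the conjugation action of $\tilde\sigma_i$ on the deck group permutes the generators $T_j$ according to the permutation $\sigma_i$ induces on the $z_j$), together with the definition of $H^1(\br)$ as a simultaneous eigenspace.

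For the negation claim, however, the paper takes a much shorter and more geometric route than your computation-heavy plan. The paper first makes a structural observation you do not: if $\tilde\sigma_3$ stabilizes $H^1(\br)$ then the permutation $(13)(24)$ fixes $\br$, so $r_1=r_3$ and $r_2=r_4$; but the shape of the defining matrix of $S(n,m)$ also forces $r_1=-r_3$ and $r_2=-r_4$, whence all $r_j$ equal $nm$ and the \emph{entire} Klein group stabilizes $H^1(\br)$. This reduces at once to the case that some $\tilde\sigma\in\{\tilde\sigma_2,\tilde\sigma_4\}$ lies in the stabilizer. For such $\tilde\sigma$ the paper then observes that the fixed point guaranteed by Proposition~\ref{P:kleinlift} lies over a point of $\CP$ distinct from the $z_j$, hence is not a zero of any $\omega\in H^{1,0}(\br)$; in a local coordinate centered there the involution is rotation by $\pi$, so it sends $dz$ to $-dz$ and therefore negates $\omega$. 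The same local argument applies to $H^{0,1}(\br)$. No branch tracking, no parity identities, and no separate treatment of the even--even case are needed: the sign is forced geometrically by the existence of a fixed point away from the zeros. Your plan would likely succeed, but it reconstructs by explicit calculation what the fixed-point argument delivers in one line; the only genuine cost of the paper's method is that one must first dispose of the $\tilde\sigma_3$-only case via the row-span constraint.
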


\subsection{The fiberwise quotient map.} Each fiber $\pi^{-1}(\lambda)$ is an $S(n,m)$ and admits a lift of $\bZ_2\times\bZ_2$ by Proposition \ref{P:kleinlift}. After a base change $B\to B_0$ (see Section \ref{SS:BC} for details), a continuous choice of  $\bZ_2\times\bZ_2$ is possible, and we achieve an action of $\bZ_2\times \bZ_2$ on the entire family of covers of $\CP$.

By Lemma \ref{L:action}, after taking fiberwise quotients, the bundle $H^1$ is still the sum of rank two bundles $H^1(\br)$, each of whose period map is a still a Schwarz triangle mapping with the same angles, one each for each group of four distinct $H^1(\br)$ over $S(n,m)$ which are permuted by the Klein four group. (See Lemma \ref{L:decomp} for details.)

Note that we use the notation $H^1(\br)$ to denote both a subbundle of the cohomology of fibers of $\S(n,m)$ and also $\T(n,m)$. We hope this will not be too misleading, despite the fact that each $H^1(\br)$ for $\T(n,m)$ corresponds to a group of four isomorphic $H^1(\br)$ for $\S(n,m)$ which are permuted by the $\bZ_2\times \bZ_2$ action. The notation is justified because as a bundle (rank two complex VHS) over $B$,  $H^1(\br)$ always denotes the same object. 

If $\br$ is the first row of $S(n,m)$,
$$\br= (nm-n-m , nm+n-m , nm+n+m , nm-n+m),$$
 in Proposition  \ref{P:periods} we can calculate $(\kappa , \mu, \nu)=(0, \frac\pi{m}, \frac\pi{n})$. Lemma \ref{L:action} gives that the four bundles $\tilde{\sigma}H^1(\br)$ are distinct. They yield a single rank 2 bundle $H^1(\br)$ after fiberwise quotient, whose period mapping is again described via Schwarz mapping onto a triangle with angles $0, \frac\pi{m}, \frac\pi{n}$. The period map $p_1$ of $H^1(\br)$ will be shown to be an isometry. 
 
\subsection{Removable singularities.} We must of course use the orbifold structure on $\T(n,m)$ induced from the orbifold structure of $\M_g$, and assign to $\T(n,m)$ the unique hyperbolic metric guaranteed by uniformization. The main subtlety is that this does \emph{not} correspond to the hyperbolic metric on $B_0=\CP\setminus\{0,1,\infty\}$. Indeed, the fiberwise quotient map has removable singularities; it sends some points at infinity to interior points, which turn out to be orbifold points. We remind the reader that if some of the punctures on $B_0$ are filled in, many possible orbifold hyperbolic metrics might result, depending on the cone angle assigned to the points which have been filled in. In particular, the hyperbolic metrics $\bH/\Delta^+(n,m,\infty)$ appear. 

To see why the fiberwise quotient map has removable singularities, consider for a moment the flat pillowcase.  The horizontal core curve may be pinched, giving a noded Riemann surface with two genus zero components. This is a degeneration (point at infinity) of $\M_{0,4}$. One of the involutions in the pillowcase symmetric group preserves the pinched curve and interchanges the two components, so the quotient is a smooth genus zero surface.

\begin{figure}[h]
\includegraphics[scale=0.5]{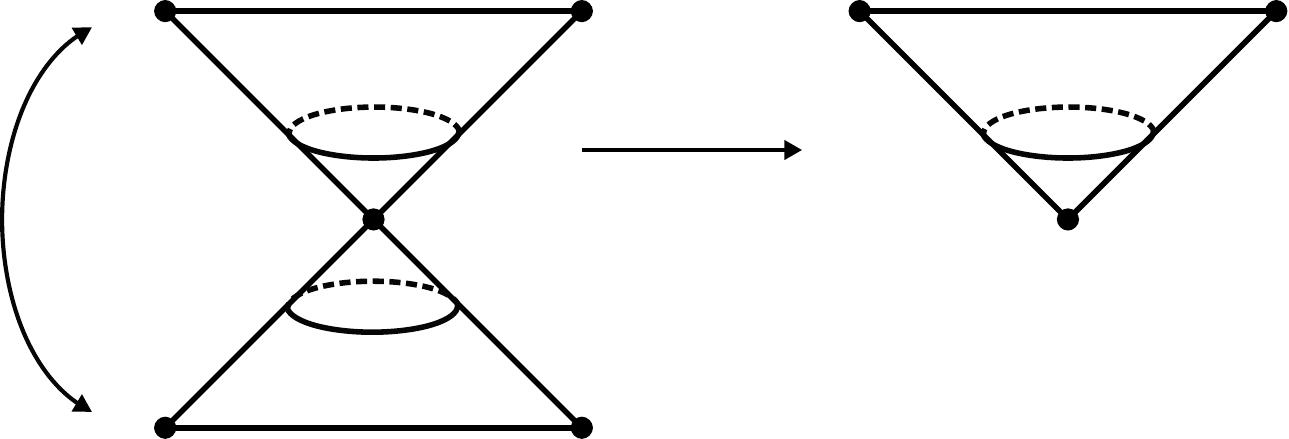}
\caption{The involution $\sigma_2$ extends to the flat pillowcase with the horizontal curve pinched. The quotient is smooth.}
\label{F:Pinched}
\end{figure}

Proposition \ref{P:kleinlift} allows a similar discussion for $\S(n,m)$. 

\begin{prop}\label{P:smooth}
As $\lambda\to 1$, in the Deligne-Mumford compactification $\pi^{-1}(\lambda)$ converges to the noded Riemann surface resulting from pinching the core curves of all horizontal cylinders on the square-tiled surface $S(n,m)$. The quotient of this noded Riemann surface by $\tilde{\sigma_2}$ is smooth. 

Similarly as $\lambda\to \infty$, $\pi^{-1}(\lambda)$ converges to the noded Riemann surface resulting from pinching the core curves of all vertical cylinders on the square-tiled surface $S(n,m)$. The quotient of this noded Riemann surface by $\tilde{\sigma_4}$ is smooth.
\end{prop}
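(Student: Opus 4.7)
The strategy is to combine the standard picture of degenerations in $\overline{\M_g}$ with Proposition~\ref{P:kleinlift}, so that $\tilde{\sigma_2}$ interchanges the two branches at each node of the stable limit, thereby smoothing the quotient. First I would identify the degeneration. As $\lambda \to 1$ the branch points $z_2, z_3 \in \CP$ collide, and in $\overline{\M_{0,4}}$ the limit is a two-component nodal sphere with $\{z_1, z_4\}$ on one component and $\{z_2, z_3\}$ on the other. The vanishing cycle on $\CPm$ is homotopic to $\alpha_2\alpha_3 = (\alpha_4\alpha_1)^{-1}$, and in the pillowcase flat metric this is exactly the horizontal core curve (the simple closed curve separating $\{z_1, z_4\}$ from $\{z_2, z_3\}$). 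By the theory of admissible covers, the stable limit of $\pi^{-1}(\lambda)$ is obtained by pinching each simple closed curve on $S(n,m)$ whose projection to $\CP$ is homotopic to the vanishing cycle, and these are precisely the core curves of the horizontal cylinders. Flat-geometrically one sees the same thing from the fact that, as $\lambda\to 1$, the horizontal cylinders on $S(n,m)$ have moduli tending to infinity and each collapses onto a pair of discs joined at a point.

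Next I would extend $\tilde{\sigma_2}$ to the limit and check its local effect at each node. On every smooth fiber of the base change of Section~\ref{SS:BC} the involution $\tilde{\sigma_2}$ is holomorphic, so properness of $\overline{\M_g}$ gives a continuous extension to an involution of the noded limit fiber. Fix a lift $\gamma$ of the horizontal core curve. Proposition~\ref{P:kleinlift} says that $\tilde{\sigma_2}$ preserves $\gamma$ setwise, while on the base $\sigma_2$ interchanges the two sides of the horizontal core curve, since $\sigma_2(\{z_1, z_4\}) = \{z_2, z_3\}$. Therefore in a tubular neighborhood of $\gamma$ the lift $\tilde{\sigma_2}$ also interchanges the two sides, and in the noded limit (where $\gamma$ has been pinched to a node) the extension of $\tilde{\sigma_2}$ must swap the two local branches. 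In suitable analytic coordinates the node looks like $\{xy = 0\} \subset \bC^2$ with the involution given by $(x,y) \mapsto (y,x)$; the invariant ring is $\bC[x + y, xy]/(xy) \cong \bC[x+y]$, a regular one-dimensional ring, so the quotient is smooth at each node image. Away from the nodes the quotient is automatically smooth since a holomorphic involution on a Riemann surface has a smooth quotient.

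The $\lambda \to \infty$ case runs in parallel: $z_3$ and $z_4$ collide, the vanishing cycle is the vertical core curve (separating $\{z_1, z_2\}$ from $\{z_3, z_4\}$), its lifts on $S(n,m)$ are the core curves of the vertical cylinders, and Proposition~\ref{P:kleinlift} ensures that $\tilde{\sigma_4}$ preserves each such lift. Since $\sigma_4(\{z_1, z_2\}) = \{z_3, z_4\}$, the same branch-swap argument applies verbatim. The main obstacle is step one, the precise identification of the vanishing cycle in the flat structure and the matching of the admissible-covers description of the stable limit with the horizontal (resp.\ vertical) cylinder decomposition of the square-tiled $S(n,m)$; once this identification is in hand, the smoothness of the quotient reduces to the local computation at each node, which is immediate from Proposition~\ref{P:kleinlift} and the side-swap property of $\sigma_2$ (resp.\ $\sigma_4$) on the pillowcase.
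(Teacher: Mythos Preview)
Your proposal is correct and follows essentially the same approach as the paper's proof: identify the degeneration on the base $\overline{\M_{0,4}}$, lift it to the cover as the pinching of the horizontal (resp.\ vertical) core curves, and then invoke Proposition~\ref{P:kleinlift} together with the fact that $\sigma_2$ (resp.\ $\sigma_4$) swaps the two sides of the base vanishing cycle to conclude that the lifted involution swaps the branches at each node. Your version is slightly more explicit than the paper's---you spell out the local model $\{xy=0\}$ with $(x,y)\mapsto(y,x)$ and the invariant-ring computation, and you cite admissible covers rather than just asserting that the limit cover is obtained by pinching the lifted vanishing cycles---but these are elaborations of the same argument rather than a different route.
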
 

To be more precise, we should say as $\lambda$ goes to any lift in $B$ of the puncture at $1$ in $B_0$, instead of saying $\lambda\to 1$. 

\begin{proof}
As $\lambda\to 1$, in $\M_{0,4}$ the base converges to two $\CP$'s glued together at a node. One contains the first and fourth marked points ($z_1$ and $z_4$) and the other contains the second and third. This noded Riemann surface is the result of pinching the core curve of the horizontal cylinder on the flat pillowcase. 

As  $\lambda\to 1$, the cover $\pi^{-1}(\lambda)$ converges to a cover of the this noded Riemann surface. This cover is $S(n,m)$ with all lifts of core curves of horizontal cylinders pinched. By Proposition \ref{P:kleinlift}, $\tilde{\sigma_2}$ preserves each of these curves while exchanging the two sides of the curve, so in the limit $\tilde{\sigma_2}$ fixes each node and exchanges the two sides of each node. Hence the quotient by $\tilde{\sigma_2}$ and also all of $\bZ_2\times \bZ_2$ is smooth. 

The situation is similar as $\lambda\to \infty$.  
\end{proof}

The upshot is that as, for example, $\lambda\to 1$ (or more accurately a lift of the puncture at $1$ in $B_0$ to $B$), the fiberwise quotients $\pi^{-1}(\lambda)/(\bZ_2\times\bZ_2)$ converge to a (smooth) Riemann surface, a point on the interior of moduli space. Hence, at lifts of $\lambda=1, \infty$ to $B$, the fiberwise quotient map has removable singularities. 



\section{$\T(n,m)$ is isometrically immersed}\label{S:BM}

In this section, we construct the Veech-Ward-Bouw-M\"{o}ller Teichm\"{u}ller curves $\T(n,m)\subset \M_g$, and prove that they are isometrically immersed.


\subsection{Base change.}\label{SS:BC} Given a family $\M$ over a base $B_0$, a \emph{base change} is the result of taking a finite cover $B\to B_0$, and pulling back to obtain a family $\M'$ over $B$.

Proposition \ref{P:kleinlift} guarantees that to each fiber of $\S(n,m)$ there is at least one and at most finitely many lifts $\langle \tilde{\sigma_2}, \tilde{\sigma_4}\rangle$ of the pillowcase symmetry group so that $\tilde{\sigma_2}, \tilde{\sigma_4}$ both have fixed points. 

Hence after applying a base change to the family $\S=\S(n,m)$ over the base $B_0=\CP\setminus\{0,1,\infty\}$, we may obtain a family $\S'$ over some larger base $B$ to which a continuous assignment of such lifts of the pillowcase symmetries is possible. 

As we will see, exactly what base change is required is not relevant to our arguments.


\subsection{Construction.} We define the fiberwise quotient map $q:B\to \M_g$ by $q(b)=\pi^{-1}(b)/(\bZ_2\times \bZ_2)$. The new base $B$ covers the old base $B_0=\CP\setminus\{0,1,\infty\}$; both are punctured Riemann surfaces. Proposition \ref{P:smooth} gives that the map $q:B\to \M_g$ has some removable singularities. We wish to ``fill in" these removable singularities, but there are some technical issues because $\M_g$ is an orbifold, and the image of the removable singularities might be orbifold points. 

For this reason we pass at once to a finite cover $\M_g'$ which is a manifold. We consider the minimal cover $B'\to B$ to which there is a map $B'\to \M_g'$ which covers the map $q:B\to\M_g$. We now consider the unique Riemann surface $\T'$ through which the map $B'\to \M_g'$ factors as
$$B'\to \T'\xrightarrow{\tau'} \M_g'$$
with $\tau$ generically one-to-one and without removable singularities. The procedure for producing $\T'$ from $B'$ is quite explicit: First, pass to the space covered by $B'$, from which the induced map to $\M_g'$ is generically one-to-one. Then, fill in the removable singularities to obtain $\T'$. 

The Riemann surface $\T'$ is equipped with the hyperbolic metric given by uniformization, and it is our goal to show that $\tau:\T'\to \M_g'$ is an isometry, showing that the induced generically one-to-one map $\T\xrightarrow{\tau} \M_g$ is a Teichm\"uller curve. We refer to $\T=\T(n,m)$ as the $(n,m)$ Veech-Ward-Bouw-M\"oller curve.

The orbifold structure on $\T$ is determined by the requirement that the natural branched cover $\T'\to\T$ is an isometry. The situation thus far is summarized in Figure \ref{F:smalldiagram}. 

\begin{figure} 
$$\xymatrix{
B' \ar[r] \ar[d] & \T' \ar[r]^{\tau'} \ar[d] & \M_g'\ar[d] \\
B \ar[r] & \T \ar[r]^{\tau} & \M_g\\
}$$
\caption{}
\label{F:smalldiagram}
\end{figure}

\subsection{Period mapping.} We will now translate our understanding of the period mapping of $\S$ to $\T$.  This begins with determining which part of $H^1(\pi^{-1}(b))$ survives after fiberwise quotient by $\bZ_2\times \bZ_2$.

\begin{lem}\label{L:decomp}
The complex VHS $H^1$ whose fibers are first cohomology over the family of fiberwise quotients $\S'/(\bZ_2\times \bZ_2)$ is isomorphic to $\oplus H^1(\br)$ where the sum runs over $\br$ in the row span of $S(n,m)$ with $t_1(\br)>\max(t_2(\br), t_3(\br), t_4(\br))$. 
\end{lem}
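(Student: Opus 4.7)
The plan is to invoke the standard fact that for a finite group $G$ acting on a compact Riemann surface $X$, one has $H^1(X/G,\bC)\cong H^1(X,\bC)^G$ as complex Hodge structures. Applied fiberwise to the quotient map $\S'\to \S'/(\bZ_2\times\bZ_2)$, this identifies $H^1$ of the family of fiberwise quotients with the Klein-four invariants of $H^1$ of $\S'$. By Proposition \ref{P:periods}, the latter splits as $\bigoplus_\br H^1(\br)$ with $\br$ ranging over the row span of $S(n,m)$ (only those $\br$ with $t(\br)=2=t(-\br)$ contribute), and Lemma \ref{L:action} describes how $\bZ_2\times\bZ_2$ acts on the summands.

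Next I would decompose $\bigoplus_\br H^1(\br)$ into Klein-four orbits and compute the invariants orbit by orbit. For an orbit with trivial stabilizer, the invariant subspace is a single rank-two subbundle, isomorphic as a complex VHS to any $H^1(\br)$ in the orbit via projection onto one summand. For an orbit with nontrivial stabilizer, Lemma \ref{L:action} guarantees that some nontrivial element of $\bZ_2\times\bZ_2$ acts on $H^1(\br)$ by $-1$, forcing the invariants of the orbit to vanish. Hence the invariant subbundle is a direct sum of one copy of $H^1(\br)$ per free orbit.

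It then remains to check that the condition $t_1(\br)>\max(t_2(\br),t_3(\br),t_4(\br))$ selects exactly one $\br$ from each free orbit and nothing from any non-free orbit. Since the Klein action permutes the four coordinates of $(r_1,r_2,r_3,r_4)$ by the pair-swaps $e,\ (12)(34),\ (14)(23),\ (13)(24)$, the first coordinates of the four orbit elements are exactly $r_1, r_2, r_3, r_4$. The strict-inequality condition thus selects the orbit element whose first coordinate is the strict maximum of the multiset $\{r_1, r_2, r_3, r_4\}$, provided such a strict maximum exists.

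The main obstacle is the case analysis showing that free orbits are precisely those for which $\{r_1, r_2, r_3, r_4\}$ has a strict maximum. Using the row-span parameterization, writing $(r_1,r_2,r_3,r_4)=(unm-vn-um,\ unm+vn-um,\ unm+vn+um,\ unm-vn+um) \pmod{2nm}$, the identities $r_1-r_2=-2vn$ and $r_1-r_4=-2um$ show that $r_1=r_2$ is equivalent to $r_3=r_4$ (both to $v\equiv 0\pmod m$) and forces a $\tilde{\sigma_2}$-stabilizer, while $r_1=r_4$ is equivalent to $r_2=r_3$ and forces a $\tilde{\sigma_4}$-stabilizer. The subtler pair-equality $r_1=r_3$ does not immediately give $r_2=r_4$, but since $r_1+r_3=2unm$ as integers while each $r_j$ lies in $(0,2nm)$ (using $t(\br)=2=t(-\br)$ to rule out $r_j=0$), the common value of $r_1$ and $r_3$ must equal $nm$; then $r_2+r_4=2nm$, so either $r_2=r_4=nm$ (yielding a $\tilde{\sigma_2}\tilde{\sigma_4}$-stabilizer) or $\max(r_2,r_4)>nm=r_1=r_3$ and the maximum of the multiset is uniquely attained at position $2$ or $4$. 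The case $r_2=r_4$ is symmetric, completing the verification.
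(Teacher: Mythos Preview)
Your proposal is correct and follows essentially the same approach as the paper: identify $H^1$ of the quotient with the Klein-four invariants, decompose orbit-by-orbit using Lemma~\ref{L:action}, and check that the condition $t_1(\br)>\max(t_2(\br),t_3(\br),t_4(\br))$ selects exactly one representative per free orbit. The paper's proof is terser, deferring the orbit-selection verification to ``as in the proof of Lemma~\ref{L:action}''; you spell this out explicitly (including the $r_1=r_3$ subcase via $r_1+r_3\equiv 0\pmod{2nm}$), which is a welcome clarification but not a different argument.
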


We will not enter into the definition of a complex VHS here, but rather remark that a complex sub-VHS of $H^1$ is merely a flat subbundle which splits into its $(1,0)$ and $(0,1)$ parts. Lemma \ref{L:decomp} asserts an isomorphism of complex VHS, which means an isomorphism of flat bundles which respects the Hodge decomposition into $(p,q)$ parts. 

The condition $t_1(\br)>\max(t_2(\br), t_3(\br), t_4(\br))$ is merely a way of picking a representative from each group of four $H^1(\br)$ which is permuted by the action of $\bZ_2\times \bZ_2$. 

\begin{proof}
For any $b\in B'$, the vector space $H^1(\pi^{-1}(b)/(\bZ_2\times \bZ_2))$ is isomorphic to the subspace of $H^1(\pi^{-1}(b))$ which is invariant under the Klein four group action.

Let $R$ be the set of $\br$ indicated in the lemma statement. Then, as in the proof of Lemma \ref{L:action}, we see that $R$ contains exactly one $\br$ for each orbit of size four of the Klein four group action on the set of nonzero $H^1(\br)$. 

By Lemma \ref{L:action} the set of invariants of $H^1(\pi^{-1}(b))$ is equal to $$\left\{\sum_R \sum_{\sigma\in \bZ_2\times \bZ_2} \sigma^*v_\br : v_\br\in H^1(\br), \br\in R\right\}.$$

Since the Klein four group preserves the complex structure on $S(n,m)$, it preserves the Hodge decomposition $H^1=H^{1,0}\oplus H^{0,1}$. The induced map $q^*$ on cohomology is thus an isomorphism of complex VHS from $H^1(\pi^{-1}(b)/(\bZ_2\times \bZ_2))$ to its image (the set of invariants) in $H^1(\pi^{-1}(b))$. 

\end{proof}

 The easiest way to describe the period (Abel-Jacobi, Torelli) map of $\T$ is to lift to Torelli space and consider the map to Siegel upper half space. 

\textbf{Torelli space and Siegel upper half space.} Let $\h_g$ denote Siegel upper half space, the space of $g\times g$ symmetric complex matrices whose imaginary part is positive definite. Let $\R_g$ be Torelli space, the quotient of Teichm\"uller space by the Torelli subgroup of the mapping class group. Equivalently, $\R_g$ is the set of Riemann surfaces $X$ with a choice of symplectic basis $\alpha_1, \beta_1, \ldots, \alpha_g, \beta_g$ of $H_1(X,\bZ)$. Given such a Riemann surface $X$, there is a unique basis $\omega_1, \ldots, \omega_g$ of holomorphic one forms which is dual to the $\alpha_i$, so $\int_{\alpha_i}\omega_j =\delta_{ij}$. Here $\delta_{ij}$ is 1 if $i=j$ and zero otherwise. The Riemann bilinear relations give that the period matrix $\left( \int_{\beta_j}\omega_i\right)_{ij}$ is symmetric and that its imaginary part is positive definite. The period map $p:\R_g\to \h_g$ sends a point in Torelli space to its period matrix. This map $p$ is holomorphic, and, by the Torelli theorem, locally injective. 

There are many lifts $\P$ of $\T'$ to $\R_g$ so that the following diagram commutes, one for each lift of a basepoint in $\T'$ to a basepoint in $\R_g$. 
$$\xymatrix{
\P\ar[r]\ar[d] & \R_g \ar[d]\\
\T'\ar[r]^{\tau'} & \M_g'\\
}$$
More precisely, by picking a symplectic basis of $H_1(\tau'(X))$ for any $X\in \T'$, we determine such a $\P$. 

The inclusion $\bH\subset \CP\setminus\{0,1,\infty\} = B_0$ can be lifted (not uniquely) to a holomorphic map $f:\bH\to \P$.

\begin{figure} 
$$\xymatrix{
&&&& (\bH)^g\ar@{^{(}->}[d] \\
& & \P \ar[urr]^p \ar[d] \ar[r]& \R_g \ar[d] \ar[r] &  \h_g \ar[dd]\\
& B' \ar[d] \ar[r] & \T'\ar[r]^{\tau'} \ar[d] & \M_g' \ar[d]  &\\
& B \ar[d]^\pi \ar[r] & \T\ar[r]^\tau & \M_g \ar[r]  & \A_g\\
\bH  \ar@{^{(}->}@/^2pc/[uuurr]^f \ar@{^{(}->}[r] & B_0  & &&\\
}$$
\caption{The period map $\R_g\to \h_g$ covers the map $\M_g\to \A_g$ which sends a Riemann surface to its Jacobian. Siegel upper half space $\h_g$ is the universal cover of the space of principally polarized abelian varieties $\A_g$. The cover $\P$ to $\T'$ or $\T$ will turn out to be the universal cover, but this is not yet evident. }
\label{F:diagram}
\end{figure}
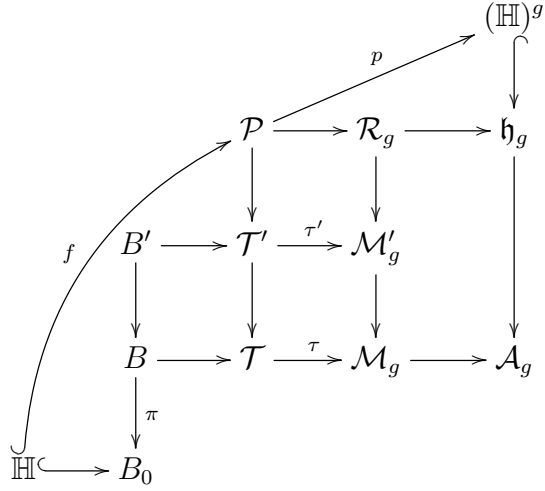 

Note that the set of diagonal matrices in $\h_g$ is naturally isomorphic to $(\bH)^g$. 

\begin{lem}
For any lift $\P$, the composition of $\P\to \R_g$ with the period mapping is diagonal, up to a change of basis which depends only on the choice of lift. Using this basis, we may thus write the period map $p=(p_1, \ldots, p_g)$, where the $p_j$ denote the diagonal entries. Furthermore, we may assume that the $p_j$ are in correspondence to the nonzero $\br$ in the row span of $S(n,m)$ with $t_1(\br)>\max(t_2(\br),t_3(\br),t_4(\br))$, and that for the $p_j$ corresponding to $\br$ the composite $p_j\circ f: \bH \to \bH$ is a Schwarz triangle map onto a triangle with angles $\kappa\pi, \mu\pi, \nu\pi$, where $$\kappa=|1-t_1-t_3|=0,\quad
\mu=|1-t_2-t_3|\in \frac{1}{m}\bZ,\quad
\nu=|1-t_1-t_2|\in \frac{1}{n} \bZ,$$
and $t_j=t_j(-\br)$. We may assume that $p_1$ corresponds to $$\br= (nm-n-m , nm+n-m , nm+n+m , nm-n+m)$$ and hence $p_1\circ f$ maps onto a triangle with angles $0, \frac{\pi}{m}, \frac{\pi}{n}$. 
\end{lem}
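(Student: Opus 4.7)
The plan is to transport the period description of Proposition~\ref{P:periods} down to $\T$ through the isomorphism of complex VHS in Lemma~\ref{L:decomp}. The key observation is that the decomposition $H^1 = \bigoplus_\br H^1(\br)$ over the quotient family is orthogonal for both the polarization and the Hodge filtration, so a symplectic basis of $H_1$ adapted to it simultaneously diagonalizes the period matrix.

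First I would establish symplectic orthogonality of the summands. Each $H^1(\br)$ on the quotient family corresponds to a pair of characters $\chi_{\pm\br}$ of the (residual) deck group of the cover $\S'/(\bZ_2\times\bZ_2) \to \CP$. Since $\omega \wedge \eta$ transforms by $\chi\chi'$ when $\omega\in H^1(\chi)$ and $\eta\in H^1(\chi')$, the polarization pairs $H^1(\br)$ nontrivially only with itself. Combined with Proposition~\ref{P:periods}, which gives $\dim H^{1,0}(\br) = \dim H^{0,1}(\br) = 1$, this yields a flat decomposition $H_1 = \bigoplus_\br H_1(\br)$ into symplectic rank-two subbundles, with monodromy preserving each summand because it preserves the character eigenspaces.

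Second, I would choose a symplectic basis $(\alpha_\br, \beta_\br)$ of each $H_1(\br)$ at the basepoint of $\P$. These assemble into a symplectic basis of $H_1$, differing from the one implicit in $\P \to \R_g$ by a fixed symplectic change of coordinates. The dual basis $\omega_\br$ of holomorphic one forms, characterized by $\int_{\alpha_{\br'}}\omega_\br = \delta_{\br\br'}$, lies in $H^{1,0}(\br)$ and hence pairs trivially with $H_1(\br')$ for $\br'\neq\br$ by orthogonality, forcing $\int_{\beta_{\br'}}\omega_\br = 0$ as well. Thus the period matrix is diagonal with entries $p_\br = \int_{\beta_\br}\omega_\br$, each of which is the ratio of two periods of a generator of $H^{1,0}(\br)$ on a symplectic basis of $H_1(\br)$. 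This is exactly the quantity computed in Proposition~\ref{P:periods} to be a Schwarz triangle map onto a triangle with angles $\kappa\pi, \mu\pi, \nu\pi$. Restriction of the index set to $t_1(\br)>\max(t_2,t_3,t_4)$ reflects Lemma~\ref{L:action}: each Klein four orbit of nonzero $H^1(\br)$'s on $\S(n,m)$ contributes a single summand after quotient, and exactly one representative satisfies this inequality.

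Finally, for $\br = (nm-n-m,\, nm+n-m,\, nm+n+m,\, nm-n+m)$ one computes $-\br \equiv (nm+n+m,\, nm-n+m,\, nm-n-m,\, nm+n-m) \pmod{2nm}$, so $t_1 = \tfrac12 + \tfrac{1}{2n} + \tfrac{1}{2m}$ and $t_3 = \tfrac12 - \tfrac{1}{2n} - \tfrac{1}{2m}$, giving $\kappa = |1-t_1-t_3| = 0$; similarly $\mu = 1/m$ and $\nu = 1/n$. The only delicate point is that the basis $(\alpha_\br, \beta_\br)$ may differ from the cycles $\alpha, \beta$ implicit in Proposition~\ref{P:periods} by an $SL(2,\bZ)$ transformation, which post-composes the Schwarz map with a M\"obius transformation and so leaves the triangle and its angles unchanged.
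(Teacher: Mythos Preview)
Your argument is correct and follows the same approach as the paper: invoke Lemma~\ref{L:decomp} to decompose $H^1$ over the quotient into the rank-two summands $H^1(\br)$, choose a symplectic basis adapted to this decomposition, and identify each diagonal period entry with the Schwarz map of Proposition~\ref{P:periods}. You supply more detail than the paper does---the symplectic orthogonality, the explicit angle computation for the distinguished $\br$, and the $SL(2,\bZ)$ ambiguity---though the orthogonality is cleanest argued via characters of the deck group upstairs on $S(n,m)$ (where $H^1(\br)$ pairs only with $H^1(-\br)$, and $-\br$ lies in the same Klein orbit as $\br$) and then transported down, rather than via a ``residual deck group'' on the quotient.
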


\begin{proof}
By Lemma \ref{L:decomp}, the bundle $H^1$ whose fibers are first cohomology over the family of fiberwise quotients $\S'/(\bZ_2\times \bZ_2)$ is isomorphic to $\oplus H^1(\br)$ where the sum runs over $\br$ in the row span of $S(n,m)$ with $t_1(\br)>\max(t_2(\br),t_3(\br),t_4(\br))$. The map $f:\bH\to \P$ arises from lifting a copy of $\bH\subset \CP$. 

Over a lifted copy of $\bH$ in $B$ we may pick a basis of homology of the fiberwise quotients as follows: let $\alpha_j$ and $\beta_j$ be a symplectic basis for the symplectic complement of the annihilator in $H_1$ of the corresponding $H^1(\br)$. Then the period mapping to $\h_g$ is diagonal, and the period coefficient $p_j$ is just the period mapping of Proposition \ref{P:periods}, and it follows from Proposition  \ref{P:periods} that $p_i\circ f$ is a Schwarz triangle mapping onto the triangle with angles $\kappa, \mu, \nu$. 
\end{proof}

{\footnotesize
\renewcommand{\arraystretch}{1.2}
\begin{center}
\begin{figure}[h!]\label{F:tables}
\begin{minipage}[t]{0.16\linewidth}
\begin{center}
  \begin{tabular}[t]{|c|c|}
    \hline
    \multicolumn{2}{|c|}{$\T(2,7)$}\\ \hline
    $(0, \frac37, \frac12)$& $\frac15$  \\ \hline
    $(0, \frac27, \frac12)$& $\frac35$ \\ \hline
    $\mathbf{(0, \frac17, \frac12)}$& $1$  \\ \hline
  \end{tabular}
\end{center}
\end{minipage}
\hspace{0.3cm}
\begin{minipage}[t]{0.16\linewidth}
\begin{center}
  \begin{tabular}[t]{|c|c|}
    \hline
     \multicolumn{2}{|c|}{$\T(2,8)$}\\ \hline
    $(0, \frac38, \frac12)$  & $\frac13$\\ \hline
    $\mathbf{(0, \frac18, \frac12)}$ & $1$ \\ \hline
  \end{tabular}
\end{center}
\end{minipage}
\hspace{0.3cm}
\begin{minipage}[t]{0.16\linewidth}
\begin{center}
  \begin{tabular}[t]{|c|c|}
    \hline
    \multicolumn{2}{|c|}{$\T(2,9)$}\\ \hline
    $(0, \frac49, \frac12)$ &$\frac17$\\ \hline
    $\mathbf{(0, \frac13, \frac12)}$&$\frac37$ \\ \hline
    $(0, \frac29, \frac12)$&$\frac57$ \\ \hline
    $\mathbf{(0, \frac19, \frac12)}$&$1$ \\ \hline
  \end{tabular}
\end{center}
\end{minipage}
\hspace{0.3cm} 
\begin{minipage}[t]{0.16\linewidth}
\begin{center}
  \begin{tabular}[t]{|c|c|}
    \hline
     \multicolumn{2}{|c|}{$\T(2,10)$}\\ \hline
    $(0, \frac3{10}, \frac12)$ & $\frac12$\\ \hline
    $\mathbf{(0, \frac1{10}, \frac12)}$ &$1$ \\ \hline
  \end{tabular}
\end{center}
\end{minipage}
\hspace{0.3cm} 
\begin{minipage}[t]{0.16\linewidth}
\begin{center}
  \begin{tabular}[t]{|c|c|}
    \hline
     \multicolumn{2}{|c|}{$\T(2,11)$}\\ \hline
    $(0, \frac5{11}, \frac12)$ &$\frac19$\\ \hline
    $(0, \frac4{11}, \frac12)$  &$\frac13$\\ \hline
    $(0, \frac3{11}, \frac12)$  &$\frac59$\\ \hline
    $(0, \frac2{11}, \frac12)$  &$\frac79$\\ \hline
    $\mathbf{(0, \frac1{11}, \frac12)}$  &$1$\\ \hline
  \end{tabular}
\end{center}
\end{minipage}

\begin{minipage}[t]{0.16\linewidth}
\begin{center}
  \begin{tabular}[t]{|c|c|}
    \hline
     \multicolumn{2}{|c|}{$\T(3,6)$}\\ \hline
    $(0, \frac16, \frac23)$ &$\frac13$\\ \hline
    $\mathbf{(0, \frac12, \frac13)}$&$\frac13 $ \\ \hline
    $(0, \frac13, \frac13)$&$\frac23 $ \\ \hline
    $\mathbf{(0, \frac16, \frac13)}$&$1$ \\ \hline
  \end{tabular}
\end{center}
\end{minipage}
\hspace{0.3cm} 
\begin{minipage}[t]{0.16\linewidth}
\begin{center}
   \begin{tabular}[t]{|c|c|}
    \hline
     \multicolumn{2}{|c|}{$\T(3,7)$}\\ \hline
    $(0, \frac27, \frac23)$&$\frac1{11}$ \\ \hline
    $(0, \frac47, \frac13)$&$\frac2{11}$ \\ \hline
    $(0, \frac17, \frac23)$&$\frac4{11}$ \\ \hline
    $(0, \frac37, \frac13)$&$\frac5{11}$ \\ \hline
    $(0, \frac27, \frac13)$&$\frac{8}{11}$ \\ \hline
     $\mathbf{(0, \frac17, \frac13)}$&$1$ \\ \hline
  \end{tabular}
\end{center}
\end{minipage}
\hspace{0.3cm} 
\begin{minipage}[t]{0.16\linewidth}
\begin{center}
  \begin{tabular}[t]{|c|c|}
    \hline
     \multicolumn{2}{|c|}{$\T(3,8)$}\\ \hline
    $(0, \frac58, \frac13)$ &$\frac1{13}$\\ \hline
    $(0, \frac14, \frac23)$ &$\frac2{13}$\\ \hline
    $\mathbf{(0, \frac12, \frac13)}$&$\frac4{13}$\\ \hline
    $(0, \frac18, \frac23)$ &$\frac5{13}$\\ \hline
    $(0, \frac38, \frac13)$ &$\frac7{13}$\\ \hline
    $\mathbf{(0, \frac14, \frac13)}$&$\frac{10}{13}$\\ \hline   
    $\mathbf{(0, \frac18, \frac13)}$ &$1$\\ \hline
  \end{tabular}
\end{center}
\end{minipage}
\hspace{0.3cm} 
\begin{minipage}[t]{0.16\linewidth}
\begin{center}
  \begin{tabular}[t]{|c|c|}
    \hline
     \multicolumn{2}{|c|}{$\T(3,9)$}\\ \hline
    $(0, \frac29, \frac23)$ &$\frac15$\\ \hline
    $(0, \frac59, \frac13)$&$\frac15 $\\ \hline
    $(0, \frac19, \frac23)$ &$\frac25 $\\ \hline
    $(0, \frac49, \frac13)$&$\frac25 $\\ \hline
    $\mathbf{(0, \frac13, \frac13)}$&$\frac35 $ \\ \hline 
    $(0, \frac29, \frac13)$&$\frac45 $\\ \hline
    $\mathbf{(0, \frac19, \frac13)}$&$1$ \\ \hline
  \end{tabular}
\end{center}
\end{minipage}
\hspace{0.3cm}
\begin{minipage}[t]{0.16\linewidth}
\begin{center}
  \begin{tabular}[t]{|c|c|}
    \hline
     \multicolumn{2}{|c|}{$\T(4,5)$}\\ \hline
    $(0, \frac15, \frac34)$ &$\frac1{11}$\\ \hline
    $(0, \frac25, \frac12)$ &$\frac2{11} $\\ \hline
    $(0, \frac35, \frac14)$&$\frac3{11} $\\ \hline
    $\mathbf{(0, \frac15, \frac12)}$&$\frac6{11} $ \\ \hline
    $(0, \frac25, \frac14)$ &$\frac7{11} $\\ \hline
    $\mathbf{(0, \frac15, \frac14)}$&$1$ \\ \hline
  \end{tabular}
\end{center}
\end{minipage}

\begin{minipage}[t]{0.16\linewidth}
\begin{center}
  \begin{tabular}[t]{|c|c|}
    \hline
     \multicolumn{2}{|c|}{$\T(4,6)$}\\ \hline
    $(0, \frac16, \frac34)$&$\frac17$ \\ \hline
    $\mathbf{(0, \frac13, \frac12)}$ &$\frac27 $\\ \hline
    $\mathbf{(0, \frac12, \frac14)}$ &$\frac37 $\\ \hline
    $\mathbf{(0, \frac16, \frac14)}$ &$1$\\ \hline
  \end{tabular}
\end{center}
\end{minipage}
\hspace{0.3cm} 
\begin{minipage}[t]{0.16\linewidth}
\begin{center}
  \begin{tabular}[t]{|c|c|}
    \hline
     \multicolumn{2}{|c|}{$\T(4,8)$}\\ \hline
    $(0, \frac18, \frac34)$&$\frac15$ \\ \hline
    $(0, \frac58, \frac14)$&$\frac15 $ \\ \hline
    $\mathbf{(0, \frac14, \frac12)}$&$\frac25 $ \\ \hline
    $(0, \frac38, \frac14)$&$\frac35 $ \\ \hline
    $\mathbf{(0, \frac18, \frac14)}$&$1$ \\ \hline
  \end{tabular}
\end{center}
\end{minipage}
\hspace{0.3cm} 
\begin{minipage}[t]{0.16\linewidth}
\begin{center}
  \begin{tabular}[t]{|c|c|}
    \hline
     \multicolumn{2}{|c|}{$\T(5,5)$}\\ \hline
    $(0, \frac15, \frac35)$ &$\frac13$\\ \hline
    $(0, \frac25, \frac25)$&$\frac13 $\\ \hline
    $(0, \frac35, \frac15)$ &$\frac13 $\\ \hline 
    $(0, \frac15, \frac25)$&$\frac23 $ \\ \hline
    $(0, \frac25, \frac15)$&$\frac23 $\\ \hline
    $\mathbf{(0, \frac15, \frac15)}$&$1$ \\ \hline
  \end{tabular}
\end{center}
\end{minipage}
\hspace{0.3cm}
\begin{minipage}[t]{0.16\linewidth}
\begin{center}
  \begin{tabular}[t]{|c|c|}
    \hline
     \multicolumn{2}{|c|}{$\T(6,10)$}\\ \hline
    $(0, \frac1{10}, \frac56)$&$\frac1{11}$ \\ \hline
    $(0, \frac35, \frac13)$ &$\frac1{11}$\\ \hline
    $(0, \frac7{10}, \frac16)$&$\frac2{11}$\\ \hline
    $(0, \frac15, \frac23)$&$\frac2{11}$ \\ \hline
    $(0, \frac3{10}, \frac12)$ &$\frac3{11}$\\ \hline
    $(0, \frac25, \frac13)$&$\frac4{11}$ \\ \hline
    $\mathbf{(0, \frac12, \frac16)}$&$\frac5{11}$ \\ \hline
    $\mathbf{(0, \frac1{10}, \frac12)}$&$\frac6{11}$\\ \hline
    $\mathbf{(0, \frac15, \frac13)}$&$\frac7{11}$ \\ \hline
     $(0, \frac3{10}, \frac16)$ &$\frac8{11}$\\ \hline
    $\mathbf{(0, \frac1{10}, \frac16)}$&$1$ \\ \hline
  \end{tabular}
\end{center}
\end{minipage}
\hspace{0.3cm}
\begin{minipage}[t]{0.16\linewidth}
\begin{center}
  \begin{tabular}[t]{|c|c|}
    \hline
     \multicolumn{2}{|c|}{$\T(8,8)$}\\ \hline
    $(0, \frac18, \frac58)$&$\frac13$ \\ \hline
    $(0, \frac38, \frac38)$&$\frac13 $ \\ \hline
    $(0, \frac58, \frac18)$&$\frac13 $ \\ \hline
    $\mathbf{(0, \frac14, \frac12)}$&$\frac13 $ \\ \hline
    $\mathbf{(0, \frac12, \frac14)}$&$\frac13 $ \\ \hline
    $\mathbf{(0, \frac14, \frac14)}$&$\frac23 $ \\ \hline
    $(0, \frac18, \frac38)$&$\frac23 $ \\ \hline
    $(0, \frac38, \frac18)$&$\frac23 $ \\ \hline
    $\mathbf{(0, \frac18, \frac18)}$&$1$ \\ \hline
  \end{tabular}
\end{center}
\end{minipage}
\caption{The first column contains a list of triples $(\kappa, \mu, \nu)$ for many $\T(n,m)$, one for each summand in the decomposition $H^1=\oplus H^1(\br)$ given in Lemma \ref{L:decomp}. The bold entries correspond to triangles which tile the hyperbolic plane. They correspond to the $\T(n',m')$ in Theorem \ref{T:cov}. The second column gives the Lyapunov exponents, calculated in Theorem \ref{T:BMlyaps}. The number of columns is the genus of $\T(n,m)$. Many results in this paper can be conjectured from these tables. 
} 
\label{F:tables}
\end{figure}
\end{center}
}

\begin{lem}
There is a covering $c: \P\to \CP\setminus\{\infty\}$, branched only over $\{0,1\}$ so that for any lift $f:\bH\to \P$ of the inclusion $\bH\subset \CP\setminus\{\infty\} $, the composite $c\circ f: \bH\to\bH$ is the identity. 
\end{lem}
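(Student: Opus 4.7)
The plan is to construct $c$ as the holomorphic extension of the composite of covering maps $\P \to B' \to B \to B_0 \subset \CP$ across the fills in $\T'$, after a M\"obius relabeling of the target.

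By construction, $\T'$ is obtained from $B'$ by filling in the removable singularities at lifts of $\lambda=1$ and $\lambda=\infty$ in $B_0$ (Proposition \ref{P:smooth}), while lifts of $\lambda=0$ remain as cusps of $\T'$. Since $\P$ covers $\T'$ and $B'\to B\to B_0=\CP\setminus\{0,1,\infty\}$ is an unramified composition of covers, the composite $\P\to B_0\subset\CP$ is an unramified cover on the dense open subset of $\P$ that projects to $B'$. Proposition \ref{P:smooth} lets me extend this map holomorphically across the preimages of the fills in $\P$, sending them to $1$ and $\infty$ in $\CP$. Because $\P$ contains no point projecting to a cusp of $\T'$, the image lies in $\CP\setminus\{0\}$. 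A M\"obius transformation sending the cusp coordinate $0$ to $\infty$ and the fill coordinates $1,\infty$ to $\{0,1\}$ (for instance $z\mapsto 1-1/z$) converts this into the desired map $c:\P\to\CP\setminus\{\infty\}$. The only ramification of $c$ occurs at preimages of the two fills, which are relabeled to $\{0,1\}$, so $c$ is branched only over $\{0,1\}$.

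For the identity statement: by definition of a lift via $c$, any $f:\bH\to\P$ of the inclusion $\bH\subset\CP\setminus\{\infty\}$ satisfies $c\circ f$ equal to that inclusion, and as a map $\bH\to\bH$ this is the identity.

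The one substantive point is that the natural projection really does extend holomorphically across the fills (with the expected local ramification). This is supplied by Propositions \ref{P:kleinlift} and \ref{P:smooth}: the involutions $\tilde\sigma_2$ and $\tilde\sigma_4$ preserve and interchange the two sides of the pinched horizontal and vertical cylinders respectively, so the fiberwise quotients at the fills are smooth Riemann surfaces, giving a holomorphic local model for $c$ near each fill with ramification determined by the local degree of the Klein four group action.
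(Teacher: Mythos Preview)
Your construction has a genuine gap: there is no natural map $\P \to B'$. Both $\P$ and $B'$ map \emph{into} $\T'$ (see Figures~\ref{F:smalldiagram} and~\ref{F:diagram}), and the map $B'\to\T'$ is explicitly built by first passing to a quotient of $B'$ (to make the map to $\M_g'$ generically one-to-one) and only then filling in removable singularities. That quotient step cannot be inverted, so the ``composite of covering maps $\P \to B' \to B \to B_0$'' you invoke simply does not exist. One could attempt to define a map $\P\to B_0$ locally by choosing local sections of $B'\to\T'$, but gluing these to a global map is precisely a monodromy problem that you have not addressed; indeed, the fact that $\T(n,m)$ is uniformized by a triangle group commensurable to $\Delta^+(n,m,\infty)$ rather than by $\Delta^+(\infty,\infty,\infty)$ already shows the map from $B_0$ (hence $B'$) to the Teichm\"uller curve is not generically injective.

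The paper avoids this entirely by building $c$ from the period coefficient $p_1$, which is intrinsically defined on all of Torelli space and therefore on $\P$: one sets $c$ to be $p_1:\P\to\bH$ followed by the quotient $\bH\to\bH/\Delta^+(n,m,\infty)\simeq\CP\setminus\{\infty\}$. The identity property then follows immediately from the previous lemma, which says $p_1\circ f$ is a Schwarz triangle map onto a fundamental domain for $\Delta^+(n,m,\infty)$. In the paper's argument Proposition~\ref{P:smooth} is used only at the end, to verify that the preimages of the two cone points of $\bH/\Delta^+(n,m,\infty)$ are actually present in $\P$ (so that $c$ is branched over $\{0,1\}$ rather than punctured there); it plays no role in constructing $c$.
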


Essentially this technical lemma says that $\P$ is naturally a cover of the base $B_0$, with points over $\lambda=0,1$ filled in. 

\begin{proof}
Note that $p_1$ induces a map $$c:\P \to \bH/\Delta^+(n,m,\infty)\simeq \CP\setminus\{\infty\}.$$ The isomorphism can be chosen so that any lift $f:\bH\to \P$ composed with this composite is the identity map from the upper half plane to itself. 

The key point now is that the preimage under $c$ of the cone points of $\bH/\Delta^+(n,m,\infty)$ are not ``missing" from $\P$. This follows directly from Proposition \ref{P:smooth} as follows. 

Suppose we fix a lift $f:\bH\to \P$ of the inclusion $\bH\subset B_0$. We must show that this map can be extended continuously to $0,1$. Since $\P$ covers $\T'$, it suffices to prove this for lifts $f:\bH\to \T'$. In this case $f(0)$ corresponds to the fiberwise quotient of some noded Riemann surface, which is smooth by Proposition \ref{P:smooth}. Hence the fiberwise quotient map has a removable singularity at this point. By the definition of $\T'$, all removable singularities are filled it. 

To avoid confusion, we remark parenthetically that it is not possible for any punctures over $\infty$ to get filled in since the period map is not proper at these punctures. (That is, points over $\lambda=\infty$ get mapped to the cusp of the Schwarz triangle via the period map $p_1$.) 
\end{proof}

\subsection{Two elementary lemmas.} The next pair of lemmas are elementary, in that they not have anything to do with moduli spaces. For the first, recall that if $f:\bC\to \bC$ is holomorphic, for any point $z_0\in \bC$, there is a holomorphic change of coordinates in the domain sending $z_0$ to $0$, and a holomorphic change of coordinates in the range, so that $f(z)=z^d$. We define $d$ as the \emph{degree of $f$ at $z_0$}. 

\begin{lem}
Let $U$ be a neighborhood of $0$ in $\bC$, and $p:U\to \bC^g$ be a holomorphic map with coordinates $p=(p_1, \ldots, p_g)$. Suppose that $p$ is locally injective at 0, and that the degree of $p_1$ at 0 divides the degree of $p_i $ at 0 for each $i=2,\ldots, g$. Then $p_1'(0)\neq 0$. 
\end{lem}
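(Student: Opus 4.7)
The natural approach is proof by contradiction via a cyclic symmetry of $p_1$. Suppose $p_1'(0)=0$, so the degree $d := d_1$ of $p_1$ at $0$ satisfies $d\geq 2$. Writing $p_1(z) = z^d u_1(z)$ with $u_1(0)\neq 0$, I extract a local holomorphic $d$-th root of $u_1$ near $0$ and make the change of coordinates $w = z\cdot u_1(z)^{1/d}$ on $U$; this is a local biholomorphism fixing $0$, and in the new coordinate $p_1(w) = w^d$. Local injectivity of $p$ and the divisibilities $d\mid d_i$ are preserved by this reparametrization, so after it I may assume outright that $p_1(w) = w^d$.

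With $p_1$ in this normal form, the cyclic group $\mu_d = \langle \zeta\rangle$ of $d$-th roots of unity acts on a neighborhood of $0 \in U$ by $\sigma(w) = \zeta w$, and $p_1\circ \sigma = p_1$. For each $i\geq 2$, write $p_i(w) - p_i(0) = w^{d_i} u_i(w)$ with $u_i(0)\neq 0$; the hypothesis $d\mid d_i$ gives $\zeta^{d_i} = 1$, and thus the leading term of $p_i\circ\sigma$ already matches that of $p_i$. The plan is then to upgrade this leading-term agreement to a full germ equality $p_i\circ\sigma = p_i$ for every $i$. Granted this, $p\circ\sigma = p$ identically on a neighborhood of $0$: for any $\zeta\neq 1$ and any $w\neq 0$ close to $0$ one has $\sigma(w)\neq w$ yet $p(\sigma(w)) = p(w)$, contradicting local injectivity at $0$. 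Hence $d = 1$, i.e.\ $p_1'(0)\neq 0$.

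The main obstacle is precisely the promotion of leading-term agreement $p_i\circ\sigma \equiv p_i \pmod{w^{d_i+1}}$ to equality of full germs $p_i\circ\sigma = p_i$. Divisibility of leading degrees alone does not force every Taylor coefficient of $u_i$ to live at an exponent divisible by $d$, so closing the gap has to use local injectivity of the full vector-valued map $p$ (not just of its first component) together with the simultaneous divisibilities on every $d_i$. The natural route is a Newton–Puiseux analysis on the normalization of the image germ $p(U)$ at $p(0)$: assuming some $u_i$ carries a nonzero coefficient at an exponent $k$ with $d\nmid k$, one uses the divisibility of the other $d_j$ to exhibit two distinct preimages in $\mu_d$-conjugate fibers of $p_1$ sharing a common $p$-image, violating injectivity and giving the desired contradiction. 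I expect this inductive Puiseux argument, rather than the reduction to the cyclic symmetry, to be the real technical heart of the proof.
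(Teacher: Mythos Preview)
Your instinct that promoting the leading-term agreement $p_i\circ\sigma\equiv p_i\pmod{w^{d_i+1}}$ to a full germ equality is the crux is correct---but that step is not merely hard, it is impossible in the stated generality: the lemma is \emph{false}. Take $g=2$, $p_1(z)=z^2$, $p_2(z)=z^2+z^3$. Both components have degree $2$ at $0$, so the divisibility hypothesis holds. The map $p=(p_1,p_2)$ is injective on all of $\bC$, since $p(z)=p(w)$ forces $z^2=w^2$ and then $z^3=w^3$, hence $z=w$. Yet $p_1'(0)=0$. Here $p_2(-z)=z^2-z^3\ne p_2(z)$, so the rotational invariance you are trying to establish simply fails; no Newton--Puiseux maneuver will manufacture it.

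The paper's one-line proof makes the same leap in compressed form: its assertion that after a single domain change (and separate range changes) every $p_i$ becomes $z^{kd_i}$ is exactly the invariance under $z\mapsto\zeta z$ (for $\zeta^k=1$) that you flagged, and it does not follow from the hypotheses as written. What rescues the \emph{application} (the next lemma on Schwarz triangle maps) is extra structure not recorded here: in a local coordinate $w$ normalizing the branched cover $c$ to $w\mapsto w^e$, the Schwarz--Christoffel expansion gives each $p_i$ the shape $p_i(w)-p_i(0)=w^{n_i}\cdot g_i(w^e)$ for a holomorphic unit $g_i$. Since $k=n_1$ divides both $e$ and every $n_i$, one gets $p_i(\zeta w)=p_i(w)$ for all $k$-th roots of unity $\zeta$ automatically, and then both the paper's normalization and your symmetry argument go through.
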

\begin{proof}
By a holomorphic change of coordinates, we may assume $p_1(z)=z^k$ and $p_i(z)=z^{kd_i}$ for some $d_i\geq 1$. It follows immediately from local injectivity that $k=1$. 
\end{proof}

\begin{lem}
Let $P$ be a Riemann surface which covers $\CP\setminus\{\infty\}$, branched over at most $0$ and $1$. Suppose we are given a holomorphic locally injective map $p: P \to (\bH)^g, p=(p_1, \ldots, p_g)$, such that for any lift $f:\bH\to P$ of the inclusion $\bH\subset \CP\setminus\{\infty\}$,  
\begin{itemize}
\item for each $i=1,\ldots, g$, the composite $p_i\circ f: \bH\to \bH$ is a biholomorphism onto a hyperbolic triangle with angles $\kappa\pi=0, \mu\pi\in \frac{\pi}{m}\bZ, \nu\pi\in \frac{\pi}{n}\bZ$ at  $p_i\circ f (0), p_i\circ f(1), p_i\circ f(\infty)$, and 
\item when $i=1$, this triangle has angles $\kappa\pi=0, \mu\pi= \frac{\pi}{m}, \nu\pi= \frac{\pi}{n}$. 
\end{itemize}
Then $P=\bH$ and $p_1:\bH\to \bH$ is an isometry. 
\end{lem}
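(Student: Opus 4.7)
The plan is to first show, via the first elementary lemma, that $p_1$ is a local biholomorphism at every point of $P$, and then to use the factorization $c = \pi \circ p_1$ through the universal orbifold cover $\pi: \bH \to \bH/\Delta^+(n,m,\infty) \simeq \CP\setminus\{\infty\}$ to force $P = \bH$ and identify $p_1$ with an element of $\mathrm{Aut}(\bH) = PSL(2,\bR)$, which acts by hyperbolic isometries.

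For the first step, fix $x \in P$ and set $\lambda_0 = c(x)$. If $\lambda_0 \notin \{0,1\}$ then $c$ is unramified at $x$; a local inverse provided by a lift $f$ expresses each $p_i$ locally as the biholomorphism $p_i \circ f$, so all local degrees equal $1$ and the divisibility hypothesis of the first elementary lemma is automatic. The substantive case is $\lambda_0 \in \{0,1\}$. Choose a local coordinate $w$ at $x$ with $c(w) = w^e$, and let $k \in \{n,m\}$ be the cone order at the corresponding orbifold point. The identity $c \circ f = \mathrm{id}$, combined with the angular behavior of the Schwarz triangle function $p_i \circ f$ near the corresponding boundary vertex (with angle $\alpha_i$), shows that $p_i(w) - p_i(x)$ is a nonzero constant multiple of $w^{e\alpha_i/\pi}$. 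Holomorphicity of $p_i$ at $x$ together with $\alpha_i \in \tfrac{\pi}{k}\bZ$ forces $\alpha_i = l_i\pi/k$ for some positive integer $l_i$; the minimality of $p_1$'s angle gives $l_1 = 1$, so the local degree $e/k$ of $p_1$ at $x$ divides the local degree $e l_i/k$ of $p_i$. The first elementary lemma then yields $p_1'(x) \neq 0$. This local degree computation is the main obstacle: extracting the exponent $e\alpha_i/\pi$ from the Schwarz triangle function and justifying the positivity of $l_i$ require a careful unwinding of the local picture, but once in hand, the divisibility is immediate.

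With $p_1$ a local biholomorphism everywhere, the factorization $c = \pi \circ p_1$ exhibits $c$ as an orbifold cover. Since $p_1$ is then a lift of $c$ through the universal orbifold cover $\pi$, the induced homomorphism $c_*: \pi_1^{\mathrm{orb}}(P) \to \Delta^+(n,m,\infty)$ factors through $\pi_1(\bH) = 0$; its injectivity forces $\pi_1^{\mathrm{orb}}(P) = 0$, and since $P$ carries no orbifold cone points $\pi_1(P) = 0$. Uniformization together with the nonconstant holomorphic map $p_1: P \to \bH$ then forces $P = \bH$. Finally, $p_1: \bH \to \bH$ is a local biholomorphism; path-lifting for the cover $c$ combined with uniqueness of lifts through $\pi$ promotes $p_1$ to a covering of $\bH$ by $\bH$, which must be a biholomorphism and hence an element of $\mathrm{Aut}(\bH) = PSL(2,\bR)$ acting as an isometry of the hyperbolic metric on $\bH$.
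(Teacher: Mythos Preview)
Your argument is correct, and the decisive first step—extracting the local degree of each $p_i$ at a point over a cone point as $e\alpha_i/\pi$ and invoking the previous elementary lemma to force $p_1'(x)\neq 0$—is exactly what the paper does (the paper phrases it as ``the angle at $f(1)$ is $\pi/(km)$,'' which is the same computation with $e=km$).

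Where you diverge is in the endgame. The paper finishes geometrically: once $p_1$ has nonvanishing derivative, it pulls back the reflected-triangle tessellation of $\bH$ to a tessellation of $P$, reads off that $p_1$ is proper with nonvanishing derivative, hence a covering of the simply connected $\bH$, hence a biholomorphism. You instead package the same information as orbifold covering theory: the equality $e=k$ makes $c$ an honest orbifold cover of $\bH/\Delta^+(n,m,\infty)$, and the existence of the lift $p_1$ through the universal cover $\pi$ kills $c_*(\pi_1(P))$, whence $P=\bH$ and $p_1\in\mathrm{Aut}(\bH)$. Both routes are sound; yours is cleaner if the reader is fluent with orbifold $\pi_1$, while the paper's tiling picture is more self-contained and matches the ``squiggly triangle'' intuition in its figure.

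One point you should make explicit: the factorization $c=\pi\circ p_1$ is not part of the hypotheses and needs a line of justification. Fix one lift $f_0$, take $\Delta^+\subset PSL(2,\bR)$ to be the triangle group with fundamental half-domain $p_1\circ f_0(\bH)$, and identify $\bH/\Delta^+$ with $\CP\setminus\{\infty\}$ via the (Schwarz-reflected) inverse of $p_1\circ f_0$; then $\pi\circ p_1\circ f_0=c\circ f_0$ on the open set $f_0(\bH)\subset P$, and the identity theorem extends $\pi\circ p_1=c$ to all of $P$. With that in place your orbifold argument goes through.
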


The intuition is that $f(\bH)\subset P$ is a (a priori possibly very squiggly) triangle, whose angles are determined by the requirement that $p$ is holomorphic and locally injective (see figure \ref{F:squiggles}). If the angles of this triangle are  $\kappa\pi=0, \mu\pi= \frac{\pi}{m}, \nu\pi= \frac{\pi}{n}$, then such triangles tile $P$ and $p_1$ is an isometry. 

\begin{proof}
We begin by tiling the codomain $\bH$ of $p_1$ with reflected copies of the geodesic triangle $p_1(f(\bH))$. Suppose $p_1$ is locally $k$ to 1 at $f(1)$. By pulling back the local picture at $p_1(f(1))$, we see that the angle at $f(1)$ is of the form $\frac{\pi}{km}$(figure \ref{F:squiggles}). The local picture at $f(1)$ consists of $2km$ triangle segments, pulled back from the $2m$ triangle segments around $p_1(f(1))$. The degree of $p_1$ at $f(1)$ is $k$, and the conditions of the lemma gives that the degree of $p_i$ at $f(1)$ is a multiple of $k$ (for $i=2,\ldots, g$). The previous lemma gives that $k=1$, so the angle at $f(1)$ is $\frac{\pi}{m}$. Similarly we may see that the angle at $f(\infty)$ is $\frac{\pi}{n}$.

\begin{figure}[h]
\includegraphics[scale=0.5]{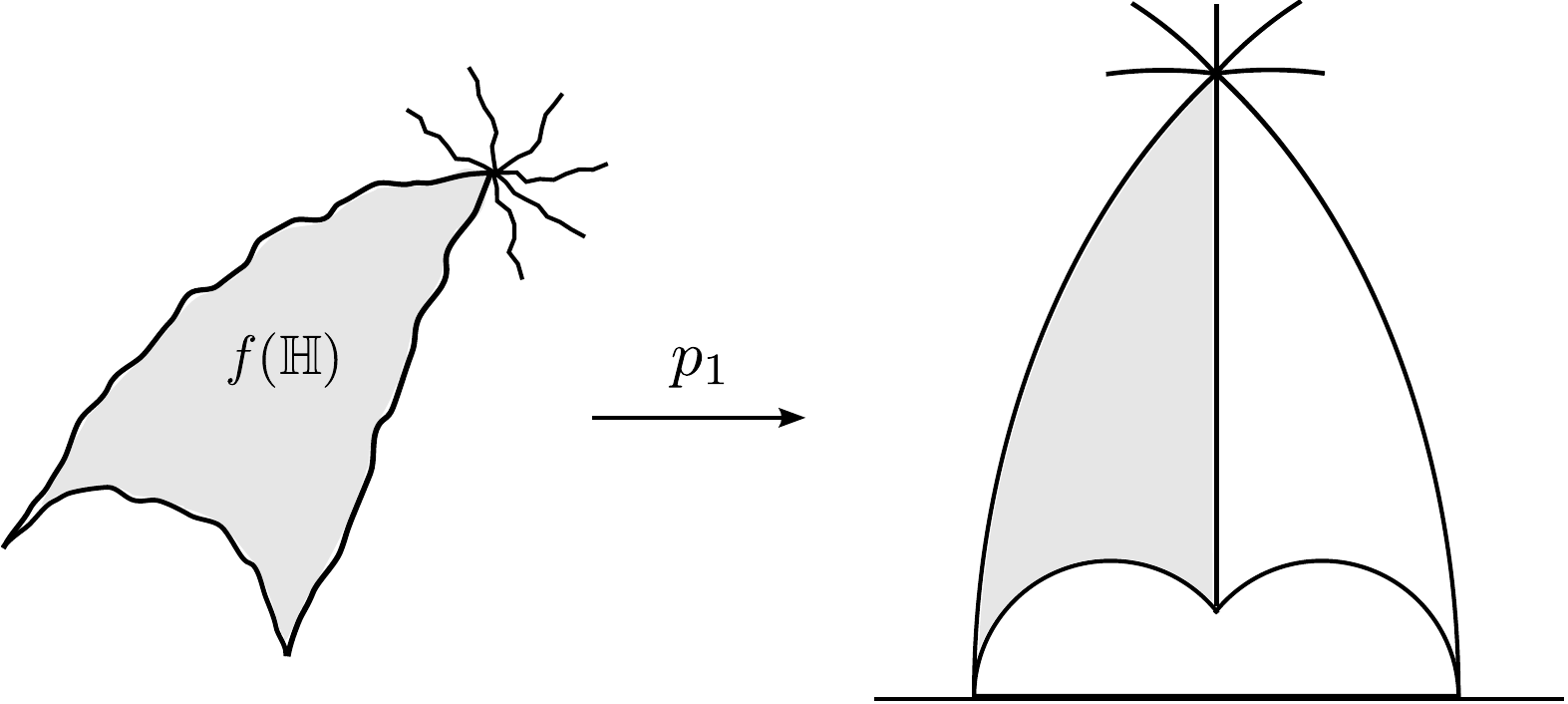}
\caption{$f(\bH)$ should be thought of as a possibly very squiggly triangle in the Riemann surface $P$. It maps via $p_1$ to a geodesic triangle, and the local picture of the tiling by such geodesic triangles at a corner can be pulled back to a corner of  $f(\bH)$.}
\label{F:squiggles}
\end{figure}

We have shown that $f(\bH)$ has angles $\frac{\pi}{m}, \frac{\pi}{n}$. We may pull back the triangles bordering $p_1(f(\bH))$ to obtain (again, a priori squiggly) triangles ``next" to $f(\bH)$. We say ``next" despite the fact it is not immediately obvious that the triangles next to $f(\bH)$ are disjoint from $f(\bH)$. However, the usual argument that the geodesic triangle with angles $0, \frac{\pi}{m}, \frac{\pi}{n}$ tiles $\bH$ applies here, and we see that $P$ is tiled by preimages under $p_1$ of triangles in $\bH$. We also see that $p_1$ is a proper holomorphic map with non vanishing derivative onto $\bH$.  Any covering map onto $\bH$ is an isomorphism since $\bH$ is simply connected, hence $P=\bH$. Any biholomorphism of $\bH$ is an isometry, so $p_1$ is an isometry. 
\end{proof}

\subsection{The magic of the Kobayashi metric.}

\begin{thm}\label{T:BM}
$\T$ is a Teichm\"uller curve. 
\end{thm}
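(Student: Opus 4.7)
The plan is to apply the second of the two elementary lemmas to the diagonal period map $p = (p_1, \ldots, p_g) : \P \to (\bH)^g$, in order to conclude that $\P = \bH$ and that the distinguished coordinate $p_1$ is a biholomorphism (equivalently, a hyperbolic isometry), and then to invoke Royden's theorem together with the fact that holomorphic maps are Kobayashi nonincreasing, to propagate this isometry down to $\tau : \T \to \M_g$.

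For the first step, I would verify the hypotheses of the elementary lemma with $P = \P$. The covering $c : \P \to \CP \setminus \{\infty\}$ branched only over $\{0,1\}$ is supplied by the preceding covering lemma. Local injectivity of $p$ is inherited from local injectivity of the Torelli period map $\R_g \to \h_g$. The angle conditions on $p_i \circ f$ for a lift $f : \bH \to \P$ of the inclusion $\bH \hookrightarrow \CP \setminus \{\infty\}$ are exactly the output of the preceding lemma on the diagonal entries: each $p_i \circ f$ is a Schwarz triangle mapping onto a triangle with one cusp and other angles in $\frac{\pi}{m}\bZ$ and $\frac{\pi}{n}\bZ$, while for the distinguished row
\[
\br = (nm-n-m,\ nm+n-m,\ nm+n+m,\ nm-n+m)
\]
the coordinate $p_1 \circ f$ lands on the triangle with angles $(0, \pi/m, \pi/n)$. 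The elementary lemma then yields $\P = \bH$ and $p_1 : \bH \to \bH$ a biholomorphism.

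The second step is the magic of the Kobayashi metric. By Royden's theorem the Teichm\"uller metric on $\M_g'$ equals the Kobayashi metric, and the Kobayashi metric on $\bH$ is the hyperbolic metric. The isometry $p_1$ factors as the chain of holomorphic maps
\[
\P \longrightarrow \R_g \longrightarrow \h_g \xrightarrow{\;\pi_1\;} \bH,
\]
in which $\pi_1$ denotes projection onto the $(1,1)$ diagonal entry. Each arrow is holomorphic and hence Kobayashi nonincreasing, while the total composition is an isometry; so every factor must be distance preserving on its image, and in particular $\P \hookrightarrow \R_g$ is a Kobayashi isometric embedding. Because $\P \to \T'$ is a covering of a Riemann surface carrying the uniformization hyperbolic metric, and $\R_g \to \M_g'$ is a covering by a discrete group acting by Teichm\"uller isometries, this isometric embedding descends to the statement that $\tau' : \T' \to \M_g'$, and hence $\tau : \T \to \M_g$, is an isometric immersion. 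This is precisely the definition of $\T = \T(n,m)$ being a Teichm\"uller curve. The main obstacle is this final descent through the two coverings; everything else in the argument simply assembles ingredients already prepared by the preceding lemmas.
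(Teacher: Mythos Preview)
Your proposal is correct and follows essentially the same route as the paper: apply the second elementary lemma to identify $\P$ with $\bH$ and $p_1$ with an isometry, then use Royden's theorem plus the Kobayashi nonincreasing property to deduce that the inclusion $\P \hookrightarrow \R_g$ is isometric, and finally descend to $\tau:\T\to\M_g$. The paper phrases the factorization slightly more directly as $p_1:\R_g\to\bH$ (the $(1,1)$ entry of the period matrix, globally defined on Torelli space) rather than going through $\h_g$, but this is the same argument.
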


\begin{proof}
The period coefficient $p_1$ is defined on all of Torelli space $\R_g$. It is simply the top left entry in the period matrix. Since the period matrix has positive definite imaginary part, $p_1$ has positive imaginary part. So $p_1:\R_g\to \bH$.  

The previous two lemmas show that the composition of the inclusion of $\P$ into Torelli space $\R_g$ with the period coefficient $p_1$ is an isometry. A theorem of Royden gives that the Teichm\"uller metric on $\M_g$ and hence also $\R_g$ is equal to the Kobayashi metric. Since holomorphic maps are distance nonincreasing, the composition of two maps is a Kobayashi isometry if and only if each map is. Hence the inclusion $\P$ into Torelli space is an isometry from $\P$ with its hyperbolic metric to $\R_g$ with the Teichm\"uller metric. Since this inclusion covers the map $\T\to\M_g$, we see that this last map is an isometry also. 
\end{proof}

\subsection{Lyapunov exponents.}\label{SS:BMlyaps} In \cite[Corollary 6.9]{BM} the Lyapunov exponents of $\T(n,m)$ are computed when $n$ and $m$ are odd and relatively prime. Here we take a different approach, using period mappings as in our treatment of abelian square-tiled surfaces in \cite{W1}.

\begin{thm}\label{T:BMlyaps} 
The nonnegative part $\Lambda$ of the Lyapunov spectrum of $H^1$ for $\T(n,m)$ is given by the following algorithm. Start with $\Lambda=\emptyset$, and for every $\br$ in the row span of $S(n,m)$ with $t_1(\br)>\max(t_2(\br), t_3(\br), t_4(\br))$, add 
$$\lambda=\frac{2\min_{j=1,2,3,4} \{t_j(-\br), 1-t_j(-\br)\}}{1-\frac1{n}-\frac1{m}}$$
to $\Lambda$. 

The quantity $\lambda$ is the ratio of the hyperbolic area of the Schwarz triangle describing the period map of $H^1(\br)$ over the hyperbolic area of the triangle with angles $0, \frac{\pi}n, \frac{\pi}m$. 
\end{thm}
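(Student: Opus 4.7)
The plan closely parallels the Lyapunov computation for abelian square-tiled surfaces in \cite{W1}. First, by Lemma \ref{L:decomp} the bundle $H^1$ over $\T$ splits as $\bigoplus_\br H^1(\br)$, a direct sum of flat rank-two complex sub-VHS indexed by $\br$ with $t_1(\br)>\max\{t_2(\br),t_3(\br),t_4(\br)\}$. The Kontsevich--Zorich cocycle preserves this splitting, and the symplectic polarization of the real form $H^1(\br)\oplus H^1(-\br)$ pairs the $(1,0)$ and $(0,1)$ parts, so each summand contributes exactly one nonnegative exponent $\lambda_\br$. This reduces the problem to computing $\lambda_\br$ for each orbit representative.

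For each rank-two sub-VHS with Hodge line bundle $L_\br=H^{1,0}(\br)$, the Kontsevich--Forni formula gives
$$\lambda_\br \;=\; \frac{2\,\deg L_\br}{-\chi(\T^*)},$$
where the degree is computed on a closed orbifold compactification $\T^*$ via Deligne's canonical extension and $-\chi(\T^*)$ is its orbifold Euler characteristic. For the tautological subbundle, Theorem \ref{T:BM} (just proved) gives $\lambda_1=1$, and Gauss--Bonnet applied to the isometry $\T'\to\bH/\Delta^+(n,m,\infty)$ identifies $-\chi(\T^*)$ with $1-1/n-1/m$, namely the area of the generator triangle divided by $\pi$. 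For a general $\br$, the Schwarz triangle description of the period map of $H^1(\br)$ from Proposition \ref{P:periods} identifies $\deg L_\br$ with the hyperbolic area of the target triangle, with the same universal proportionality constant as for $L_1$; hence $\lambda_\br$ equals the ratio of the hyperbolic area of the Schwarz triangle for $H^1(\br)$ over $\pi(1-1/n-1/m)$.

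Finally we compute this area. Since $\br$ lies in the row span of $S(n,m)$, whose rows each satisfy $r_1+r_2=r_3+r_4=2nm$, the fractional parts satisfy $t_1+t_2, t_3+t_4 \in\{0,1\}$ (writing $t_j$ for $t_j(-\br)$); combined with $t(-\br)=2$ and $\kappa=|1-t_1-t_3|=0$ we conclude $t_1+t_3=t_2+t_4=1$. The Schwarz triangle then has angles $(0,|t_1-t_2|\pi,|1-t_1-t_2|\pi)$ and area $\pi(1-|t_1-t_2|-|1-t_1-t_2|)$. A short case analysis on the signs of $t_1-t_2$ and $1-t_1-t_2$ (four cases, each yielding $2t_j$ or $2(1-t_j)$ for whichever index realizes the minimum) shows this equals $2\pi\min_j\{t_j,\,1-t_j\}$. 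Dividing by $\pi(1-1/n-1/m)$ yields the claimed formula, and the final sentence of the theorem is the area-ratio interpretation already exhibited in the middle step.

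The main technical obstacle is the middle step: although the rank-two Kontsevich--Forni formula is now standard, its correct application requires computing $\deg L_\br$ with both cusp contributions (the vertex of angle zero, where the local system has unipotent monodromy, requires Deligne's canonical extension) and orbifold contributions (the vertices of angle $\pi/n$ and $\pi/m$ correspond to cone points of $\T^*$), and checking that the proportionality between $\deg L_\br$ and the hyperbolic area of the target triangle is the same for all $\br$. This is handled essentially as in \cite{W1}, where the analogous correspondence is established for Schwarz-triangle period mappings of abelian covers of $\CP$ branched over four points; the remaining case analysis is straightforward.
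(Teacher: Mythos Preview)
Your approach is essentially the same as the paper's: both reduce via the Kontsevich--Forni formula (phrased by you through $\deg L_\br$ and Gauss--Bonnet, by the paper through the average squared hyperbolic norm of the period-map derivative and the change-of-variables formula) to the ratio of hyperbolic areas of the two Schwarz triangles, deferring the identification to \cite{W1}. One minor slip: the rows of $S(n,m)$ do \emph{not} satisfy $r_1+r_2=r_3+r_4=2nm$; rather columns 3 and 4 are the negatives of columns 1 and 2, so $r_1+r_3\equiv r_2+r_4\equiv 0\pmod{2nm}$, which gives $t_1+t_3=t_2+t_4=1$ directly---but you reach this conclusion anyway via $\kappa=0$ and $t(-\br)=2$, so the error is harmless.
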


\begin{proof}
The proof proceeds exactly as in \cite[Section \ref{S:lyaps}]{W1} for abelian square-tiled surfaces. Specifically, based on \cite{Fex1, EKZbig} it is computed in \cite[Theorem \ref{T:periodlyap}]{W1} that the Lyapunov exponent is the average squared hyperbolic norm of the derivative of the period map. Then, as in \cite[Theorem \ref{T:arealyaps}]{W1}, the change of variables formula reveals that this is the ratio of the hyperbolic area of a triangle in the domain (in this case, the triangle with angles $0, \frac{\pi}n, \frac{\pi}m$) to the area of the image of this triangle under the Schwarz triangle map. In \cite[Theorem \ref{T:arealyaps}]{W1}, the area of the image triangle (that is, the triangle in the description of the Schwarz triangle mapping), is computed to be $2\pi\min_{j=1,2,3,4} \{t_j(-\br), 1-t_j(-\br)\}$. 
\end{proof}

Corollary \ref{C:Lmults} follows immediately. 


\section{Covering relations between $\T(n,m)$ and $\T(n',m')$}\label{S:rels} In this section we prove Theorem \ref{T:cov}. 

\subsection{Covering relations between $S(n,m)$ and $S(n',m')$.} Recall that $S(n,m)$ is defined as 

$$M_{2nm}\left(\begin{array}{cccc}
nm-n-m & nm+n-m & nm+n+m & nm-n+m \\
nm+n-m & nm-n-m & nm-n+m & nm+n+m\\
\end{array}\right).$$

Recall that the \emph{row span} of $S(n,m)$ is the abelian subgroup of $(\bZ/(2nm\bZ))^4$ generated the two row vectors above. Multiplying by any $k\in \bN$ allows us to also consider the row span as a subgroup of $(\bZ/(k\cdot2nm\bZ))^4$. 

Basic results in \cite[Section \ref{SS:order}]{W1} give that $S(n,m)$ covers $S(n',m')$ (in a way compatible with the maps to $\CP$), if $nm=k\cdot n'm'$ and $k$ times the row span of $S(n',m')$ is contained in the row span of $S(n,m)$. 

\begin{lem} 
The row span of $S(n,m)$ contains $$(2m, 2m, -2m, -2m)\quad\text{and}\quad (2n, -2n, -2n, 2n).$$ If either $n$ or $m$ is odd, it contains  $$(m, m, -m, -m)\quad\text{and}\quad (n, -n, -n, n).$$ In all cases it contains $$(-n-m, n-m, n+m, -n+m).$$
\end{lem}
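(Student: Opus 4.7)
The plan is to verify each claim by exhibiting an explicit $\bZ$-linear combination of the two rows $R_1, R_2$ of the defining matrix of $S(n,m)$ that reduces to the stated vector modulo $2nm$.

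The first two vectors are immediate from direct computation: $R_2 - R_1 = (2n, -2n, -2n, 2n)$ exactly, and $R_1 + R_2 = (2nm - 2m,\, 2nm - 2m,\, 2nm + 2m,\, 2nm + 2m) \equiv (-2m, -2m, 2m, 2m) \pmod{2nm}$, so $-(R_1 + R_2)$ produces $(2m, 2m, -2m, -2m)$.

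For the remaining assertions I would introduce the substitution $s = a+b,\ d = a-b$, so that the four coordinates of $aR_1 + bR_2$ take the uniform shape $snm \pm dn \pm sm$. Matching this against a target vector modulo $2nm$ then yields a congruence on $d$ modulo $m$ (from the difference of the first two coordinates), a congruence on $s$ modulo $n$ (from the difference of the first and fourth), and a single residual parity condition from coordinate one after the previous two are substituted; integrality of $a, b$ imposes the further condition $s \equiv d \pmod 2$. For the target $(m, m, -m, -m)$ this system becomes $d \equiv 0 \pmod m$, $s \equiv -1 \pmod n$, plus a parity relation that is compatible with $s \equiv d \pmod 2$ precisely when at least one of $n, m$ is odd. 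Explicit witnesses are easy to record: $(a, b) = (\tfrac{m-1}{2},\, -\tfrac{m+1}{2})$ works when $m$ is odd, and $(a, b) = (\tfrac{n-1+m}{2},\, \tfrac{n-1-m}{2})$ works when $n$ is odd but $m$ is even. The target $(n, -n, -n, n)$ is handled by the symmetric argument.

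For the last vector $w = (-n-m, n-m, n+m, -n+m)$, I would first record the integer identity $R_1 - w = (nm, nm, nm, nm)$, which reduces the claim to showing that $(nm, nm, nm, nm)$ lies in the row span modulo $2nm$. The same $(s, d)$ analysis now gives the conditions $n \mid s,\ m \mid d$, a parity condition, and $s \equiv d \pmod 2$, and a quick inspection shows these are jointly solvable in each of the four parity cases for $(n, m)$ with no further hypothesis (for example, $s = n,\ d = 0$ when $n$ is even). The only real obstacle is the parity bookkeeping that subdivides the odd case into the three sub-cases odd-odd, odd-even, even-odd; I would handle each with a single explicit combination rather than attempt a uniform formula.
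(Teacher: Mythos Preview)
Your proposal is correct; each of the explicit linear combinations you give checks out, and the $(s,d)$ substitution cleanly organizes the congruence and parity conditions. The paper itself simply leaves this lemma to the reader, so there is no alternative approach to compare against—your verification is exactly the kind of routine computation the authors had in mind.
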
 

\begin{proof}
Left to the reader.
\end{proof}

\begin{lem}\label{L:cov}
If $n$ or $m$ is odd: $S(n',m')$ covers $S(n,m)$ if $n'$ divides $n$ and $m'$ divides $m$. 

If $n$ and $m$ are both even: $S(n',m')$ covers $S(n,m)$ if $n'$ divides $n$ and $m'$ divides $m$ and $n/n'+m/m'$ is even 
\end{lem}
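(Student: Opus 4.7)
The plan is to apply the covering criterion stated just above the lemma: a cover $S(n,m) \to S(n',m')$ compatible with the maps to $\CP$ exists iff $k := nm/(n'm')$ times the row span of $S(n',m')$, pulled into $(\bZ/2nm\bZ)^4$, lies in the row span of $S(n,m)$. Write $a = n/n'$ and $b = m/m'$, so $k = ab$. Let $R_1, R_2$ denote the rows of $S(n,m)$ and $R_1', R_2'$ the rows of $S(n',m')$. The column permutation $(1,2,3,4)\mapsto(2,1,4,3)$ interchanges $R_1 \leftrightarrow R_2$ and $R_1' \leftrightarrow R_2'$, so it suffices to exhibit integers $c_1, c_2$ with $c_1 R_1 + c_2 R_2 \equiv k R_1' \pmod{2nm}$; the swapped pair $(c_2, c_1)$ then witnesses $k R_2'$.

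Expanding gives $k R_1' = (nm - nb - ma,\, nm + nb - ma,\, nm + nb + ma,\, nm - nb + ma)$, and setting $S = c_1 + c_2$, $D = c_1 - c_2$, the four coordinates of $c_1 R_1 + c_2 R_2$ each take the form $S\cdot nm \pm D\cdot n \pm S\cdot m$, with signs $(-,-)$, $(+,-)$, $(+,+)$, $(-,+)$ in the respective positions. Comparing the pairwise differences of the four target congruences removes the $nm$-term and immediately forces $S \equiv a \pmod n$ and $D \equiv b \pmod m$, i.e.\ $S = a + ns$ and $D = b + mt$ for integers $s, t$. Substituting back into the first coordinate and reducing modulo $2nm$, the single remaining condition simplifies to the parity statement $t \equiv 1 + a + s(n-1) \pmod 2$. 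In addition, integrality of $c_1 = (S+D)/2$ requires $a + b + ns + mt \equiv 0 \pmod 2$.

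The proof then finishes by a short parity case analysis. If at least one of $n, m$ is odd, then $ns$ or $mt$ achieves both parities as $s, t$ vary, so the integrality constraint can be satisfied simultaneously with the parity constraint on $t$ for any values of $a$ and $b$. If $n$ and $m$ are both even, then $ns + mt$ is automatically even, so the integrality constraint collapses to $a + b \equiv 0 \pmod 2$, which is exactly the lemma's hypothesis $n/n' + m/m'$ even; given this, the parity constraint then uniquely determines $t \pmod 2$ for any choice of $s$.

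The main obstacle, and the source of the bifurcated hypothesis, is the careful separation of which congruences must hold modulo $nm$ versus modulo $2nm$: the factor of $2$ in $2nm$ is precisely what produces the extra parity requirement when $n$ and $m$ are both even, and a less careful reduction would either overlook this condition or impose it superfluously in the odd case.
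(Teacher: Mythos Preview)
Your proof is correct and uses the same covering criterion as the paper: check that $k$ times the rows of $S(n',m')$ lie in the row span of $S(n,m)$ modulo $2nm$. The execution differs in organization. The paper first records (in the preceding lemma) that the row span of $S(n,m)$ always contains $(2m,2m,-2m,-2m)$, $(2n,-2n,-2n,2n)$, and $(-n-m,n-m,n+m,-n+m)$, and additionally $(m,m,-m,-m)$, $(n,-n,-n,n)$ when $n$ or $m$ is odd; it then simply observes that $kR_1'-R_1=(b-1)(-n,n,n,-n)+(a-1)(-m,-m,m,m)$ and reads off the parity condition from which of these building blocks are available. You instead solve the congruence system $c_1R_1+c_2R_2\equiv kR_1'$ directly via the change of variables $S=c_1+c_2$, $D=c_1-c_2$, reducing to the single parity constraint plus the integrality condition $S+D$ even.

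Both routes are short; the paper's has the virtue of isolating the structure of the row span in a reusable lemma, while yours is self-contained and makes explicit that the obstruction in the both-even case comes precisely from the integrality of $c_1,c_2$ rather than from the congruences themselves.
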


\begin{proof}
Say $km'=m$, $ln'=n$. Multiplying all the rows of $S(n/l, m/k)$ by $kl$ we obtain 
$$\left(\begin{array}{cccc}
nm-kn-lm & nm+kn-lm & nm+kn+lm & nm-kn+lm \\
nm+kn-lm & nm-kn-lm & nm-kn+lm & nm+kn+lm\\
\end{array}\right).$$
It suffices to show that the two rows of the above matrix are contained in the row span of $S(n,m)$ \cite[Section \ref{SS:order}]{W1}. The first row differs from the first row of $S(n,m)$ by 
$$(-(k-1)n-(l-1)m , +(k-1)n-(l-1)m , \ldots , \ldots).$$
The parity of $(k-1)+(l-1)$ is the same as that of $n/n'+m/m'$. The result now follows from the previous lemma.  
\end{proof}

\subsection{From $S(n,m)$ to $\T(n,m)$.} We proceed to the proof that, under the conditions of Lemma \ref{L:cov}, every point of $\T(n,m)$ covers a point on $\T(n',m')$. 

\begin{proof}[\textbf{\emph{Proof of Theorem \ref{T:cov}.}}]
Say $km'=m$, $ln'=n$. Let $$\br'= (nm-kn-lm , nm+kn-lm , nm+kn+lm , nm-kn+lm).$$ The assumptions above guarantee that $\br'$ is in the row span of $S(n,m)$. 

We know already that $S(n,m)$ covers $S(n',m')$. Hence, the function field of $S(n',n')$ is contained in that of $S(n,m)$. Let $H'$ be the subgroup of the deck group of $S(n,m)$ which acts trivially on the function field of $S(n',m')$. So $S(n,m)/H=S(n',m')$. 

Note that we know exactly what each $\tilde{\sigma}H^1(\br')$ is by Lemma \ref{L:action}. We claim that $H'$ is the intersection of the stabilizers of $\tilde{\sigma}H^1(\br')$ for each $\tilde{\sigma}\in \bZ_2\times\bZ_2$.

It follows from this definition that $H'$ is invariant under conjugation by $\bZ_2\times \bZ_2$. From general principles, we may conclude that there is an induced action of $\bZ_2\times \bZ_2$ on $S(n',m')=S(n,m)/H'$. The map from $S(n,m)$ to $S(n',m')$ commutes with the $\bZ_2\times \bZ_2$ actions, so we get a map $$S(n,m)/(\bZ_2\times\bZ_2)\to S(n',m')/(\bZ_2\times\bZ_2).$$

As a technical detail, we should point out that the induced action of $\bZ_2\times\bZ_2$ on $S(n',m')$ necessarily has fixed points since the action of $\bZ_2\times \bZ_2$ on $S(n,m)$ does. 

This proves the result for all but the finitely many points of $\T(n,m)$ which are not in the image of the fiberwise quotient map. By Proposition \ref{P:smooth}, these are the lifts of of the punctures $1,\infty$ of $B_0$ to $B$. The result at these points follows by a continuity argument.
\end{proof}

\begin{rem}
The covering of the previous theorem is almost never a regular (normal, Galois) cover. The exception is that when $S(n,m)$ is a twofold cover of $S(n',m')$, the cover must be regular since all twofold covers are regular. Every fiber of $\T(n,m)$ admits an involution which negates the generating abelian differential, and when $S(n,m)$ is a twofold cover of $S(n',m')$, then the cover of Theorem \ref{T:cov} is the quotient by this involution. The involution is induced by the deck transformation $\left(\begin{array}{c} nm\\ nm \end{array}\right)$ on $S(n,m)$ and is visible as $y\mapsto -y$ in Theorem \ref{T:eqns}. This involution could presumably be used to \emph{twist} some or all of the $\T(n,m)$ as in \cite{Mc2} to obtain quadratic differentials which generate Teichm\"uller curves.  
\end{rem}


\section{Real multiplication of Hecke type}\label{S:Hecke}

In this section we prove Theorem \ref{T:Hecke}. We begin by recalling the definition of endomorphisms of Hecke type from \cite{E}. 

Given a compact Riemann surface $Y$, its Jacobian is defined as $$\Jac(Y)=H^{1,0}(Y)^*/H_1(Y,\bZ).$$ If $Y$ has endomorphism group $G$, then there is a natural map $\bQ[G]\to \End(\Jac(Y))\otimes \bQ$. Simply put, each automorphism $g\in G$ induces an an action $g_*$ on $H_1(Y, \bZ)$ which extends to a complex linear self-adjoint endomorphism of  $H^{1,0}(Y)^*$. This determines an endomorphism on $\Jac(Y)$. 

Now, let $H$ be a subgroup of $G$ and set $X=Y/H$. Quite possibly, $X$ has no automorphisms. Let $$\pi_H=\frac1{|H|}\sum_{h\in H} h_*$$ be the projection of $H_1(Y,\bZ)$ onto $H_1(X, \bZ)$. For every $g\in G$, we obtain a map 

\begin{equation}\label{E:pigpi}
 \pi_H \circ g_*\circ \pi_H:  H_1(X,\bZ)\to H_1(X, \bZ).
\end{equation} 

This induces a map from the double coset algebra $\bQ[H\backslash G/H]$ to the endomorphism algebra $\End(\Jac(X))\otimes \bQ$. Endomorphism in the image of this map (for some $Y, G, H$) are said to be of \emph{Hecke type}. See the introduction to \cite{E} for more details. 

To prove Theorem \ref{T:Hecke}, we will let $Y=S(n,m)$, and set $G$ to be the group of automorphisms of $S(n,m)$ generated by the abelian deck group and $\bZ_2\times \bZ_2$. We will let $H= \bZ_2\times \bZ_2$, so $X=Y/H$ is a point on $\T(n,m)$. As before, we note that all but finitely many points on $\T(n,m)$ are of this form. 

Our starting point is the following result of M\"oller \cite[Theorem 2.7]{M}.

\begin{thm}\label{T:MRM}
Let $\T$ be a Teichm\"uller curve generated by an abelian differential, and $F$ the trace field of its uniformizing group. For every point $X\in\T$, the Jacobian $\Jac(X)$ splits up to isogeny as $A_1\times A_2$, where $A_1$ admits real multiplication by $F$. The abelian differential $\omega$ on $X$ which generates the Teichm\"uller curve is an eigenform for the real multiplication.  
\end{thm}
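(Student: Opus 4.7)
The plan is to extract the real multiplication directly from the variation of Hodge structures (VHS) on $R^1\pi_*\bC$ over the Teichm\"uller curve $\T$, using the fact that the uniformizing representation of $\T$ takes values in $PSL(2,\bR)$ with trace field $F$. Let $\omega$ denote the generating abelian differential, which varies holomorphically as a section $\omega_X$ of $H^{1,0}$ as $X$ varies in $\T$. The starting point is that the span of $\omega_X$ and its complex conjugate $\overline{\omega_X}$ forms a real rank $2$ sub-VHS $\bL\subset H^1$ over $\T$, whose monodromy representation is (up to commensurability) precisely the uniformizing representation $\rho:\pi_1(\T)\to PSL(2,\bR)$ with trace field $F$. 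This ``tautological'' sub-VHS is the geometric incarnation of the Teichm\"uller disk, and its existence follows from the definition of a Teichm\"uller curve together with the standard description of the Kodaira--Spencer map for families of abelian differentials.

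Next I would exploit the $F$-linear structure on the monodromy. Let $d=[F:\bQ]$ and let $\sigma_1=\id,\sigma_2,\ldots,\sigma_d$ be the Galois embeddings of $F$ into $\bR$. The key step is to use Deligne's semisimplicity theorem for polarized VHS on quasiprojective curves to decompose $H^1\otimes F$ as a direct sum of irreducible sub-VHS over $F$. For each $\sigma_i$, the Galois-conjugate representation $\sigma_i\circ \rho$ is still a representation of $\pi_1(\T)$ into $PSL(2,\bR)$, and the associated flat rank $2$ bundle $\bL_{\sigma_i}$ appears as a summand of $H^1\otimes F$. The direct sum
\[ A_1^{\vee} := \bigoplus_{i=1}^d \bL_{\sigma_i} \]
is defined over $\bQ$ (being a Galois orbit) and hence descends to a rank $2d$ flat $\bQ$-sub-VHS of $H^1$, which, by Deligne's fixed part theorem, is a direct summand. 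The complementary summand gives the second factor $A_2$ of the isogeny decomposition $\Jac(X)\sim A_1\times A_2$.

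With the decomposition in hand, the real multiplication is produced as follows. The field $F$ acts on $\bigoplus_i \bL_{\sigma_i}$ by $\lambda\cdot(v_1,\ldots,v_d)=(\sigma_1(\lambda)v_1,\ldots,\sigma_d(\lambda)v_d)$. Because the decomposition is a decomposition of VHS, each $\bL_{\sigma_i}$ is preserved by the Hodge filtration, so $F$ acts by endomorphisms respecting the Hodge structure, and after integrality and polarization are checked these yield endomorphisms of $A_1$ giving real multiplication by $F$. The eigenform condition on $\omega$ is then automatic: $\omega_X$ lies in the $(1,0)$-part of $\bL=\bL_{\sigma_1}$, which is exactly the $\sigma_1$-isotypic line for the $F$-action, so $\lambda$ acts on $\omega_X$ by $\sigma_1(\lambda)=\lambda$.

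The principal obstacle is verifying that the Galois-conjugate local systems $\bL_{\sigma_i}$ genuinely appear as sub-VHS of $H^1\otimes F$ (and not just as abstract representations), and that the resulting $\bQ$-structure on $A_1^{\vee}$ lifts to an honest abelian subvariety rather than merely to a sub-VHS. The first point requires the nontrivial input that Teichm\"uller curves are defined over number fields and that $H^1$ has a $\bQ$-structure compatible with the monodromy; the second requires invoking a standard consequence of Deligne's theory (e.g.\ via the Mumford--Tate group or via the global invariant cycle theorem) that flat $\bQ$-sub-VHS of $R^1\pi_*\bQ$ correspond to isogeny factors of the relative Jacobian. These are both available, but weaving them together with the $F$-action to produce the polarized real multiplication is the technical heart of the argument.
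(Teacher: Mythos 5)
The paper does not actually prove this statement: it is imported verbatim from M\"oller \cite[Theorem 2.7]{M} and used as a black box, the only gesture toward a proof being the remark at the end of Section \ref{S:Hecke} that for the specific curves $\T(n,m)$ one could establish it directly from the deck-group endomorphisms of the covering surfaces $S(n,m)$ (a genuinely different, more concrete route than yours, available only because those fibers have large automorphism groups). So there is no in-paper argument to compare against; what you have written is, in outline, M\"oller's original proof: the tautological rank two sub-VHS $\bL$ spanned by $\re\omega$ and $\im\omega$ with monodromy the Veech group, Deligne semisimplicity, the Galois orbit $\bigoplus_i \bL_{\sigma_i}$ descending to a $\bQ$-rational summand, and the eigenform property of $\omega$ as the $(1,0)$-line of the identity conjugate.

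The step you flag only obliquely, and which is the real content of M\"oller's argument, is multiplicity one: you must show that the conjugate local systems $\bL_{\sigma_1},\ldots,\bL_{\sigma_d}$ are pairwise non-isomorphic and that each occurs exactly once in $H^1\otimes\overline{\bQ}$. Without this, the prescription $\lambda\cdot(v_1,\ldots,v_d)=(\sigma_1(\lambda)v_1,\ldots,\sigma_d(\lambda)v_d)$ depends on a choice of copy of each $\bL_{\sigma_i}$, need not be Galois-equivariant (hence need not descend to $\bQ$), and would in any case give $\dim A_1>[F:\bQ]$, which is not real multiplication. M\"oller obtains multiplicity one from the maximal Higgs property of $\bL$: its Kodaira--Spencer map is an isomorphism, the Arakelov inequality forces every other rank two summand to be strictly sub-maximal, and this both pins down $\bL$ canonically and separates the conjugates. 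A second point to make explicit is that the trace field alone only places the monodromy of $\bL$ in a quaternion algebra over $F$; to speak of $\bL$ as ``defined over $F$'' and take its Galois orbit one must first pass to $H^1\otimes\overline{\bQ}$ and argue that the orbit of $\bL$ under $\mathrm{Gal}(\overline{\bQ}/\bQ)$ consists exactly of the $\bL_{\sigma_i}$. With those two ingredients supplied, your outline closes up into M\"oller's proof.
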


Consider $X\in \T(n,m)$ of the form $X=S(n,m)/(\bZ_2\times\bZ_2)$ with generating differential $\omega$. To show that the real multiplication present on a factor of $\Jac(X)$ is of Hecke type, it suffices to exhibit maps of the form \ref{E:pigpi} with $\omega$ as an eigenform, for a set of eigenvalues which generate the trace field. Let us now collect the required results to do this. 

The uniformizing group of $\T(n,m)$ is commensurable to $\Delta^+(n,m,\infty)$, which has invariant trace field 
$$F=\bQ[\xi_{n}+\xi_{n}^{-1}, \xi_{m}+\xi_{m}^{-1}, (\xi_{2n}+\xi_{2n}^{-1})(\xi_{2m}+\xi_{2m}^{-1})],$$
see \cite{MR} and Section \ref{SS:tracef} below. For any uniformizing group of a Teichm\"uller curve, the trace field is equal to the invariant trace field \cite{KS}.

The proof of Proposition \ref{P:kleinlift} begins with the following elementary \emph{commutation relations}, which follow by general principles from the existence of fixed points for $\tilde{\sigma_2}$ and $\tilde{\sigma_4}$. Let $T_j$ be the deck transformation of $S(n,m)$ corresponding to the oriented loop $\alpha_j$ about $z_j$.  
 \begin{eqnarray}\label{E:intertwine}
 \tilde{\sigma_2} T_1 \tilde{\sigma_2} = T_2 \quad \quad \quad \tilde{\sigma_2} T_3 \tilde{\sigma_2} = T_4 \\  \tilde{\sigma_4} T_1 \tilde{\sigma_4} = T_4 \quad \quad \quad \tilde{\sigma_4} T_2 \tilde{\sigma_4} = T_3
 \end{eqnarray}

We may now proceed to proof, following the strategy outlined above.

\begin{proof}[\textbf{\emph{Proof of Theorem \ref{T:Hecke}.}}]
Take $\omega\in H^{1,0}(\br)$. Using the commutation relations we compute the action of \ref{E:pigpi} on $\omega$ when $g= T_1^p T_2^q$.
\begin{eqnarray*}
&&\left(\sum_{\sigma\in \bZ_2\times\bZ_2} \sigma\right) T_1^p T_2^q \left(\sum_{\sigma\in \bZ_2\times\bZ_2} \sigma\right) \omega
\\&=& 
\left( e^\frac{2\pi i (pr_1+qr_2)}{N}+e^\frac{2\pi i (-pr_1-qr_2)}{N}+e^\frac{2\pi i (pr_2+qr_1)}{N}+e^\frac{2\pi i (-pr_2-qr_1)}{N} \right)\left(\sum_{\sigma\in \bZ_2\times\bZ_2} \sigma\right)\omega
\end{eqnarray*}

Take $r_1=nm-n-m, r_2=nm+n-m$, so $\sum_{\sigma\in \bZ_2\times\bZ_2} \sigma\omega$ projects to a generator of $\T(n,m)$. Taking $p=q=1$, $p=-q=1$ and $p=1, q=0$, we see that the scalars 
$$\left( e^\frac{2\pi i (pr_1+qr_2)}{N}+e^\frac{2\pi i (-pr_1-qr_2)}{N}+e^\frac{2\pi i (pr_2+qr_1)}{N}+e^\frac{2\pi i (-pr_2-qr_1)}{N} \right)$$
generate the invariant trace field of $\Delta^+(n,m,\infty)$.
\end{proof}

\begin{rem}
Above we assume the existence of the real multiplication and then show it is of Hecke type. With only slightly more effort it is possible to show directly that the Jacobian splits up to isogeny and to establish Theorem \ref{T:MRM} directly. 
\end{rem}


\section{Comparison to Veech, Ward and Hooper}\label{S:grn}

In this section we give the generators for $\T(n,m)$. The following theorem may be viewed as an extension of the computations in \cite[Theorem 6.14]{BM}, which establishes the result when $n$ and $m$ are relatively prime and $n$ is odd; however, our methods are are somewhat different. We prove Theorem \ref{T:eqns} in Section \ref{S:eqns}. 

\begin{thm}\label{T:eqns}
$\T(n,m)$ is generated by the Riemann surface and holomorphic differential indicated below.

If $m$ is odd:
$y^{2n}=(u-2)\prod_{j=1}^{\frac{m-1}2} \left(u-2\cos\frac{2\pi j}{m}\right)^2,$\\ and $\frac{ydu}{(u-2)\prod_{j=1}^{\frac{m-1}2} \left(u-2\cos \frac{2\pi j}{m}\right)}.$

If $m$ is even and $n$ is odd:
$y^{2n}=(u-2)^{n}\prod_{j=1}^{\frac{m}2} \left(u-2\cos\frac{\pi (2 j-1)}{m}\right)^2,$ and $\frac{ydu}{(u-2)\prod_{j=1}^{\frac{m}2} \left(u-2\cos \frac{\pi (2j-1)}{m}\right)}.$ 

If $m$ and $n$ are both even:
$y^{n}=(u-2)^{\frac{n}2}\prod_{j=1}^{\frac{m}2} \left(u-2\cos\frac{\pi (2 j-1)}{m}\right),$ and $\frac{ydu}{(u-2)\prod_{j=1}^{\frac{m}2} \left(u-2\cos \frac{\pi (2j-1)}{m}\right)}.$
\end{thm}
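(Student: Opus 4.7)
The plan is to realize $X = S(n,m)/(\bZ_2\times\bZ_2)$ together with its generating one-form as an explicit cyclic cover of $\CP$, by choosing coordinates in which the pillowcase symmetries act transparently and then taking Klein-invariant combinations. Since $\T(n,m)$ is a one-parameter family of Riemann surfaces but only a single generating pair is needed, I may specialize the base parameter $\lambda$ to the most symmetric value. First I will change coordinates on the base so that the four branch points lie at $\pm a, \pm a^{-1}$ for a suitable $a$, with the pillowcase involutions acting as $\sigma_2\colon z\mapsto -z$ and $\sigma_4\colon z\mapsto z^{-1}$.

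By Lemma~\ref{L:decomp} and Proposition~\ref{P:periods}, the generating differential of $\T(n,m)$ descends from $\omega = dz/w$ on $S(n,m)$, where
\[
w^{2nm}
= (z-a)^{nm-n-m}(z+a)^{nm+n-m}(z+a^{-1})^{nm+n+m}(z-a^{-1})^{nm-n+m}.
\]
The key is that each exponent has the form $nm + \epsilon_n n + \epsilon_m m$ with $\epsilon_n,\epsilon_m\in\{\pm 1\}$ depending on the branch point. This lets me factor
\[
w^{2nm} = \Pi(z)^{nm}\,B(z)^{n}\,C(z)^{m},
\]
where $\Pi(z)=\prod (z-z_j)$ is Klein invariant after dividing by $z^2$, $B(z) = (v_z+v_a)/(v_z-v_a)$ (with $v_\bullet = \bullet+\bullet^{-1}$) is $\sigma_4$-invariant and satisfies $\sigma_2^* B = B^{-1}$, and $C(z) = (z^2-a^{-2})/(z^2-a^2)$ is $\sigma_2$-invariant and satisfies $\sigma_4^* C = a^{-4}\,C^{-1}$.

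The $\bZ_2\times\bZ_2$-averaged one-form $\omega + \sigma_2^*\omega + \sigma_3^*\omega + \sigma_4^*\omega$ lies in the Klein invariants of the full orbit $\bigoplus_{\tilde\sigma} H^{1,0}(\tilde\sigma\br)$ and equals $q^*\eta$ for the generator $\eta$ on $X$ by Lemma~\ref{L:decomp}. Working out which roots of $\Pi^{nm}$, $B^n$, and $C^m$ are Klein-invariant yields a single cyclic-cover generator $y$ on $X$, with the $\Pi^{nm}$ piece contributing the square root $\sqrt{u-u_a}$ (where $u = z^2+z^{-2}$ and $u_a = a^2+a^{-2}$), and the $B^n$ and $C^m$ pieces combining into a Chebyshev-type polynomial in $u$. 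Applying the identity $z^m+z^{-m} = 2T_m((z+z^{-1})/2)$ converts the $m$-th roots hidden in the $C$ factor into linear factors $(u - 2\cos(2\pi j/m))$ on the quotient; choosing $a$ so that $u_a$ lies at the appropriate position then aligns the factorization with the statement. The differential $\omega = dz/w$ pushes forward to $y\,du/P(u)$ as claimed.

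The main obstacle is the parity analysis that distinguishes the three cases. When $m$ is odd, the roots appearing in the factorization are genuine $m$-th roots of unity, yielding the values $2\cos(2\pi j/m)$. When $m$ is even, the Chebyshev factorization instead produces $m$-th roots of $-1$, giving the shifted values $2\cos((2j-1)\pi/m)$. When in addition $n$ is even, an extra square root becomes rational on $X$ thanks to the second lift of $\bZ_2\times\bZ_2$ afforded by Proposition~\ref{P:kleinlift}, so the cyclic cover drops from degree $2n$ to degree $n$ and every exponent on the right hand side halves; this parity drop mirrors the condition $n/n'+m/m'\in 2\bZ$ of Theorem~\ref{T:cov}. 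Verifying the exact cosines, exponents, and multiplicities in all three cases is then a detailed but routine computation with roots of unity and Chebyshev identities.
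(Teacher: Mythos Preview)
There is a genuine gap. Your coordinate $u=z^2+z^{-2}$ is a Klein invariant of the \emph{base} $\CP_z$, so the induced map $X\to\CP_u$ has degree equal to $|\mathrm{Deck}(S(n,m)/\CP_z)|$, which is on the order of $2nm$, not $2n$. No change of variable on your $u$--line can therefore realise $X$ as the degree--$2n$ cyclic cover in the statement. The $u$ in the theorem is \emph{not} built from $z$: in the paper it is $u=p+p^{-1}$ where $p=w_1/w_2$ is an $m$--th root of a cross ratio of the four branch points, i.e.\ a function on the cover. Your Chebyshev step, as written, applies $T_m$ to the base variable $z$ rather than to such a $p$, so the linear factors $(u-2\cos(2\pi j/m))$ never appear over your $u$.

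There is a second, related problem. The theorem exhibits one specific point of $\T(n,m)$, and that point is the image of the \emph{degenerate} fibre at $\lambda=\infty$ (smooth after quotient by Proposition~\ref{P:smooth}). If you work at an interior $\lambda$ with branch points $\pm a,\pm a^{-1}$, the quotient $X$ depends on $a$ and its equation will carry $a$ as a parameter; it will not match the statement. Your suggestion to ``choose $a$ so that $u_a$ lies at the appropriate position'' forces $u_a=2$, i.e.\ $a=\pm1$, which is exactly a degeneration --- so you would be taking the limit after all, but without having set up the machinery to do so. The paper's approach exploits this degeneration directly: at $\lambda=\infty$ the surface $S(n,m)$ breaks into two components swapped by $\tilde\sigma_4$, so quotienting by $\tilde\sigma_4$ is free (pick one component $P$, now branched over just two points), and the entire computation reduces to $P/\langle\tilde\sigma_2\rangle$, a quotient by a single involution. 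Finally, the halving of exponents when $n$ and $m$ are both even is not due to the second Klein lift; it is simply that the right side of the $y^{2n}$ equation becomes a perfect square, so the defining polynomial is reducible.
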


From this description we immediately get Theorem \ref{T:same}, which says that Hooper's lattice surfaces generate the Veech-Ward-Bouw-M\"oller curves. 

\begin{proof}[\textbf{\emph{Proof of Theorem \ref{T:same}}}]
The generators given in Proposition \ref{T:eqns} coincide with those given in \cite[Proposition 17]{H}.
\end{proof}

Theorem \ref{T:same} allows us to immediately appeal to a Hooper's work \cite{H}, which was facilitated by his beautiful description of the flat structure of these generators in terms of semiregular polygons. 

\begin{proof}[\emph{\textbf{Proof of Corollary \ref{C:hoop}}}]
That $\T(n,m)=\T(m,n)$ follows from symmetry in the grid graph construction of \cite{H}.
The list of arithmetic $\T(n,m)$ is \cite[Corollary 10]{H} and follows from the calculation of the uniformizing group in \cite[Theorem 9]{H}. Geometric primitivity is \cite[Theorem 11]{H}. The genus computations are \cite[Theorems 13, 14]{H} and could moreover be easily verified from the generators above.
\end{proof}

\begin{rem}
The fundamental theorem of Schwarz-Christoffel mappings gives that the function 
$$f(z)=\int^z \prod_{k=1}^{p-1} (u-u_k)^{\alpha_k-1} du$$
maps the upper half-plane $\bH$ to a polygon (which may have self intersections) with angles $\pi \alpha_1, \ldots, \pi\alpha_p$, at $f(u_1), \ldots, f(u_{p-1}), f(\infty)$ (we assume $0< \alpha_k<2$). 

For the one forms in \ref{T:eqns}, if $m=2$ then the Schwarz-Christoffel mapping maps on a triangle with angles $\frac{\pi}{2}, \frac{\pi}{n}, \frac{(n-2)\pi}{2n}$. If $m=3$, it maps onto a triangle with angles $\frac{\pi}{2n}, \frac{\pi}{n}, \frac{(2n-3)\pi}{2n}$. It follows that $\T(2,n)$ is the Veech Teichm\"uller curve generated by the regular $n$--gon, and that $\T(3,m)$ is the Ward Teichm\"uller curve arising from billiards in the triangle with angles $\frac{\pi}{2n}, \frac{\pi}{n}, \frac{(2n-3)\pi}{2n}$ \cite{V, W}. 

See \cite[Corollary 5.6, 6.14]{BM} for a more algebraic discussion of these coincidences. Lochak has explicitly given the family for the Veech curves \cite{L}, see also \cite{Mc4}.

When $m=4,5$, in \cite{BM} it was determined that the image of the Schwarz-Christoffel mapping does not have self crossing, and quadrilateral billiards which unfold to the above generators were explicitly determined.
\end{rem}


\section{Algebraic primitivity}\label{S:algprim}

Hooper has shown that the $(n,m)$ Veech-Ward-Bouw-M\"oller curve $\T(n,m)$ is always geometrically primitive (Corollary \ref{C:hoop}). Recall that a Teichm\"uller curve is called algebraically primitive if the degree of the trace field is equal to the genus \cite{M2}. Algebraic primitivity implies geometric primitivity, but not vice versa \cite{M2}. 

The goal of this section is to prove Theorem \ref{T:ap} which gives when $\T(n,m)$ is algebraically primitive. We give a formula for the degree of the (invariant) trace field of $\T(n,m)$, so the proof of Theorem \ref{T:ap} could proceed by directly comparing the degree of the trace field to the genus. This is doable, but involves rather a lot of cases, so we begin by reducing the number of cases to be checked.

We also recover a result of Hooper showing that not all triangle groups uniformize Teichm\"uller curves. 


\subsection{An obstruction to algebraic primitivity.}\label{SS:notap} Here we show

\begin{prop}
If $S(n,m)$ nontrivially covers some $S(n', m')$ with $n', m'>1, n'm'\geq 6$, then $\T(n,m)$ is not algebraically primitive. 
\end{prop}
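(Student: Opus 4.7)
The plan is to use the covering $S(n,m) \to S(n',m')$ to produce a covering of generic points $\phi:X\to X'$ at the level of the Teichm\"uller curves, and then exploit the resulting nontrivial sub-abelian variety of the Jacobian to contradict algebraic primitivity.

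First I would verify that the $\bZ_2 \times \bZ_2$ action on $S(n,m)$ guaranteed by Proposition \ref{P:kleinlift} descends through the cover $S(n,m) \to S(n',m')$ to an action of the same type on $S(n',m')$ (i.e., lifting the pillowcase group with both involutions having fixed points). The key combinatorial point is that the subgroup $H' \subset G$ of deck transformations of $S(n,m)/S(n',m')$ is invariant under $\bZ_2 \times \bZ_2$-conjugation; this follows from the commutation relations \ref{E:intertwine} together with the symmetric structure of the defining matrices of $S(n,m)$ and $S(n',m')$. Thus the generic point $X = S(n,m)/(\bZ_2 \times \bZ_2) \in \T(n,m)$ admits a covering $\phi: X \to X' = S(n',m')/(\bZ_2 \times \bZ_2) \in \T(n',m')$. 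The hypotheses $n',m' > 1$ and $n'm' \geq 6$ together with Corollary \ref{C:hoop} force $g(X') \geq 1$, while nontriviality of the cover forces $g(X') < g(X)$, so $\phi^* \Jac(X')$ sits inside $\Jac(X)$ (up to isogeny) as a proper nonzero sub-abelian variety.

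Next I would assume for contradiction that $\T(n,m)$ is algebraically primitive, so that the trace field $F = F(n,m)$ has $[F:\bQ] = g(X)$ and $\Jac(X)$ admits real multiplication by this totally real field of maximal degree. A standard argument then forces $\Jac(X)$ to be \emph{$F$-simple}: an $F$-stable proper nonzero sub-abelian variety would correspond to a rank-one $F$-submodule of $H_1(\Jac(X), \bQ)$, giving a factor $B$ of complex dimension $g/2$ with an induced action of $F$ of degree $2\dim B$, which by Albert's classification would require CM by a CM extension of $F$, not possible since $F$ is totally real.

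The main obstacle is to show that $\phi^* \Jac(X')$ is $F$-invariant. For this I would appeal to the Hecke-type description of Theorem \ref{T:Hecke}: the real multiplication acts via operators of the form $\pi_H g_* \pi_H$ with $g \in G$ and $H = \bZ_2 \times \bZ_2$. Since $G$ is abelian, $H'$ is normal in $G$, so $g_*$ preserves the $H'$-invariants in $H^{1,0}(S(n,m))$, namely the pullback of $H^{1,0}(S(n',m'))$. Combined with the $H$-invariance of $H'$ established in the first paragraph, a short cohomology chase confirms that the Hecke-type operators preserve $\phi^* H^{1,0}(X') \subset H^{1,0}(X)$, hence preserve $\phi^* \Jac(X')$ up to isogeny. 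This contradicts $F$-simplicity, yielding the proposition.
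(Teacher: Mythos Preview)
Your argument is essentially sound and reaches the conclusion, but it takes a considerably longer route than the paper. The paper's proof is three sentences: the covering $S(n,m)\to S(n',m')$ forces $\T(n,m)$ to carry a rank-two local subsystem $H^1(\br')$ whose period map is the Schwarz triangle map with angles $0,\pi/n',\pi/m'$, hence with monodromy commensurable to the discrete group $\Delta^+(n',m',\infty)$; but by M\"oller's structural result an algebraically primitive Teichm\"uller curve has exactly one rank-two local subsystem with discrete monodromy. Your approach instead produces the fiberwise cover $X\to X'$ via Theorem~\ref{T:cov}, pulls back $\Jac(X')$ to a sub-abelian variety, and checks $F$-stability using the Hecke description from Theorem~\ref{T:Hecke}. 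This works, and has the virtue of making the splitting of the Jacobian explicit, but it leans on more of the paper's later machinery (Theorems~\ref{T:cov} and~\ref{T:Hecke}) whereas the paper's proof uses only the VHS decomposition from Lemma~\ref{L:decomp} and a citation to~\cite{M}.

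One correction: you write ``Since $G$ is abelian, $H'$ is normal in $G$,'' but the group $G$ in Theorem~\ref{T:Hecke} is generated by the deck group \emph{and} $\bZ_2\times\bZ_2$, and is not abelian (the commutation relations~\eqref{E:intertwine} show the Klein group acts nontrivially on the deck group by conjugation). What you actually need, and what you in fact established in your first paragraph, is that $H'$ is normalized by $\bZ_2\times\bZ_2$; combined with $H'$ sitting inside the abelian deck group this gives normality in $G$. Also, the inequality $g(X')<g(X)$ is not automatic from nontriviality alone when $g(X')=1$; you should remark that the genus-$1$ cases of $\T(n,m)$ never satisfy the proposition's hypothesis, so this edge case is vacuous.
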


The non-triviality requirement simply means that $(n,m)\neq(n',m')$. 

\begin{cor}\label{C:notap}
Unless $n=m=4$, or $n$ and $m$ are odd primes, or one of $n$ and $m$ is 2 and the other is a power of two, a prime, or twice a prime, then $\T(n,m)$ is not algebraically primitive. 
\end{cor}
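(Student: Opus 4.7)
The plan is to reduce to pure combinatorics via the preceding Proposition: it suffices to show that whenever $(n,m)$ is not on the exception list, there exists $(n',m')$ satisfying the hypotheses of Lemma \ref{L:cov} with $n',m'>1$, $n'm'\geq 6$, and $(n',m')\neq(n,m)$. The only subtle constraint is the parity condition $n/n'+m/m'$ even in the case $n,m$ both even; outside that case, any pair of proper divisors will do as long as $n'm'\geq 6$.

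I would proceed by splitting on the parity of $(n,m)$. First, if both $n,m$ are odd, the parity condition is vacuous; the exception is both prime, so WLOG $n$ is odd composite, and then picking any odd prime divisor $p<n$ of $n$ gives $(n',m')=(p,m)$ with $n'm'\geq 9$. Second, if exactly one of $n,m$ is even (WLOG $n$), again no parity condition is imposed: for $n\geq 4$ take $(n',m')=(2,m)$ (proper since $n>2$, with $n'm'=2m\geq 6$ because $m\geq 3$), and for $n=2$, $m$ odd, if $m$ is not prime (exception) we choose an odd prime $p<m$ dividing $m$ and take $(n',m')=(2,p)$.

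Third, the case both $n,m$ even, where the parity condition bites. If $n,m\geq 4$ and $(n,m)\neq(4,4)$, then $(n',m')=(n/2,m/2)$ works: both entries are $\geq 2$, the parity sum equals $4$, and $n'm'=nm/4\geq 6$ since in this range $nm\geq 24$. The remaining sub-case is WLOG $n=2$, with $m$ even; the listed exceptions are precisely $m=2^k$ and $m=2p$, so otherwise we may write $m=2^a k$ with $k$ odd and either $a\geq 2$, $k\geq 3$, or $a=1$ and $k$ odd composite. In the former take $(n',m')=(2,2^a)$; in the latter, choose an odd prime divisor $p$ of $k$ with $p<k$ and take $(n',m')=(2,2p)$. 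Both choices make $m/m'$ odd, so the parity condition $1+m/m'$ even is satisfied, and in both $n'm'\geq 6$.

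The main obstacle is purely bookkeeping: one has to verify that the parity condition in Lemma \ref{L:cov} forces a ``halving'' behavior in the both-even case, which is what rules out the sole exception $(4,4)$ (where all candidate divisor pairs either violate parity or give $n'm'=4<6$) and the exceptions $m\in\{2^k,2p\}$ when $n=2$. Once the case split above is written down, each exception on the list corresponds bijectively to an obstruction in exactly one of the sub-cases, confirming completeness. No further ideas are required beyond the Proposition and Lemma \ref{L:cov}.
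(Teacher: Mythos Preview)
Your proposal is correct and follows essentially the same approach as the paper: both reduce to finding a nontrivial $(n',m')$ satisfying the divisibility (and, when both $n,m$ are even, parity) conditions of Lemma~\ref{L:cov}, and both organize the argument as a short case analysis on the parity of $n,m$, using $(n/2,m/2)$ in the both-even case and a proper prime divisor otherwise. Your write-up is in fact somewhat more careful than the paper's in explicitly verifying $n'm'\geq 6$ and the parity constraint in each sub-case.
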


\begin{proof}[\textbf{\emph{Proof of Corollary.}}]
The proof proceeds by cases using Lemma \ref{L:cov}.

\textbf{$n,m>2$ even} (excluding $(4,4)$): $S(n,m)$ covers $S(n/2, m/2)$.

\textbf{$n,m$ not both even, not both prime:} say $m=km'$. Then $S(n,m)$ covers $S(n, m')$. 

\textbf{$n=2$, $m$ even}: write $m=ab$, with $a=2^p$ and $b$ odd. If $b>1$ and $a>2$, then $S(n,m)$ covers $S(n, a)$. If $a=2$ and $b=cd$ is not prime, then $S(n,m)$ covers $S(n, 2c)$.
\end{proof}

\begin{proof}[\textbf{\emph{Proof of Proposition.}}]
When the period mapping of $H^1(\br)$ is described by a Schwarz triangle, the monodromy of this bundle is commensurable to the corresponding triangle group. 
If $S(n,m)$ covers $S(n',m')$, then $\T(n,m)$ has a $H^1(\br)$ whose period mapping is given by a triangle with angles $\frac \pi{n'}, \frac \pi{m'}, 0$. 
Hence $\T(n,m)$ has a rank two local subsystem with monodromy commensurable to $\Delta^+(n',m',\infty)$. However, an algebraically primitive Teichm\"uller curve may only have one rank two local subsystem with discrete monodromy \cite{M}. 
\end{proof}


\subsection{Trace fields.}\label{SS:tracef}
For any integer $k$, let $\xi_k=\exp(2\pi i/k)$. Set 
\begin{eqnarray*}
F&=&\bQ[\xi_{2n}+\xi_{2n}^{-1}, \xi_{2m}+\xi_{2m}^{-1}],\\
 E&=& \bQ[\xi_{n}+\xi_{n}^{-1}, \xi_{m}+\xi_{m}^{-1}, (\xi_{2n}+\xi_{2n}^{-1})(\xi_{2m}+\xi_{2m}^{-1})].
\end{eqnarray*}
So $F$ is the trace field of the $(n,m,\infty)$ triangle group, and $E$ is the invariant trace field \cite{MR}. Set $\gamma=\gcd(n,m)$ and $l=\lcm(n,m)$. Note that both $E$ and $F$ are normal subfields of the cyclotomic extension $\bQ[\xi_{2l}]$ over $\bQ$. Recall that $Gal_{\bQ}(\bQ[\xi_{2l}])=\bZ_{2l}^*$, and that $a\in \bZ_{2l}^*$ acts by $a(\xi_{2l})=\xi_{2l}^a$. 

There is a map from $Gal_\bQ(\bQ[\xi_{2l}]) \to F$, and the size $s$ of the kernel is equal to the degree of $\bQ[\xi_{2l}] $ over $F$. Hence the degree of $F$ over $\bQ$ is $\varphi(2l)/s$, where $s$ is easily computed. The degree of $E$ may similarly be computed. 

\begin{prop}\label{P:tracef}
The degree of $F$ over $\bQ$ is $\phi(2l)/4$ if $\gamma=1$, and $\phi(2l)/2$ otherwise. 
\end{prop}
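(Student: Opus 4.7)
The plan is to realize $F$ as a subfield of the cyclotomic field $\bQ[\xi_{2l}]$ and apply Galois theory to count its fixed subgroup. Since $2n$ and $2m$ both divide $2l$, the elements $\xi_{2n}$ and $\xi_{2m}$ lie in $\bQ[\xi_{2l}]$, so $F\subseteq \bQ[\xi_{2l}]$. Using the identification $\mathrm{Gal}(\bQ[\xi_{2l}]/\bQ)\cong(\bZ/2l\bZ)^*$, where $a$ acts by $\xi_{2l}\mapsto \xi_{2l}^a$, the key observation is that $a$ fixes $\xi_{2k}+\xi_{2k}^{-1}$ (for $k\mid l$) if and only if $a\equiv \pm 1\pmod{2k}$, since this is the condition for $\{\xi_{2k},\xi_{2k}^{-1}\}$ to be preserved as a set. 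Hence the subgroup $H$ fixing $F$ is
$$H=\{a\in(\bZ/2l\bZ)^*: a\equiv \pm 1\pmod{2n}\ \text{and}\ a\equiv \pm 1\pmod{2m}\},$$
and by Galois theory $[F:\bQ]=\phi(2l)/|H|$, so the problem reduces to computing $|H|$.

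To count $|H|$, I would invoke the Chinese Remainder Theorem in the generalized form for non-coprime moduli: a pair of residues $(x\bmod 2n,\ y\bmod 2m)$ lifts (uniquely) to a residue mod $\mathrm{lcm}(2n,2m)=2l$ if and only if they agree modulo $\gcd(2n,2m)=2\gamma$. Applying this to the four sign patterns $(\epsilon_1,\epsilon_2)\in\{\pm1\}^2$, the pattern is realizable precisely when $\epsilon_1\equiv\epsilon_2\pmod{2\gamma}$. When the signs agree this is automatic; when they disagree it demands $2\gamma\mid 2$, i.e.\ $\gamma=1$. Also, any $a$ that is $\pm 1$ modulo both $2n$ and $2m$ is automatically coprime to $2l$, so it actually lies in $(\bZ/2l\bZ)^*$.

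Therefore $|H|=4$ when $\gamma=1$ and $|H|=2$ when $\gamma>1$, yielding the asserted formula $[F:\bQ]=\phi(2l)/4$ or $\phi(2l)/2$ respectively. The main subtlety—really the only point that requires care—is the CRT bookkeeping: identifying that the compatibility of two sign patterns is dictated exactly by whether $1\equiv -1\pmod{2\gamma}$, which neatly separates the coprime case from the non-coprime case. I would also perform a couple of sanity checks (e.g.\ $n=m=3$ gives $F=\bQ$, while $n=2,m=5$ gives $F=\bQ[\sqrt{5}]$ of degree $2$) to confirm the formula before presenting it.
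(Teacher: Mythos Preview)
Your proof is correct and follows exactly the approach the paper indicates in the paragraph preceding the proposition: identify the subgroup of $\mathrm{Gal}(\bQ[\xi_{2l}]/\bQ)\cong(\bZ/2l\bZ)^*$ fixing $F$ and compute its order. The paper omits the details (writing only that ``$s$ is easily computed''), and your CRT bookkeeping, including the observations that $\lcm(2n,2m)=2l$, $\gcd(2n,2m)=2\gamma$, and that distinct sign patterns yield distinct residues since $n,m>1$, fills them in correctly.
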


%

Note that if either $n$ or $m$ is odd, then we may directly see that $E=F$.  

\begin{prop}\label{P:invtracef}
Assume that $n$ and $m$ are even. The degree of $E$ over $\bQ$ is $\varphi(2l)/4$, unless $\gamma>2$ and one of $n/\gamma$ or $m/\gamma$ is even, in which case the degree is $\varphi(2l)/2$.
\end{prop}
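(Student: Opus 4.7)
The plan is to mimic the strategy of Proposition \ref{P:tracef}: since $E\subseteq\bQ[\xi_{2l}]$, we will have $[E:\bQ]=\varphi(2l)/|S|$ where $S=\text{Stab}_{\bZ_{2l}^*}(E)$ is the subgroup fixing $E$ pointwise. An element $a\in\bZ_{2l}^*$ lies in $S$ iff it fixes each of the three generators, which yields the conditions $a\equiv\pm 1 \pmod n$ and $a\equiv\pm 1 \pmod m$, together with fixation of the product $(\xi_{2n}+\xi_{2n}^{-1})(\xi_{2m}+\xi_{2m}^{-1})$. The key subcomputation I will make is for the third condition: writing $a\equiv s_n+k_n n \pmod{2n}$ with $s_n\in\{\pm 1\}$ and using $\xi_{2n}^n=-1$, one finds $\xi_{2n}^a+\xi_{2n}^{-a}=(-1)^{k_n}(\xi_{2n}+\xi_{2n}^{-1})$, and analogously for $m$. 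Hence $a$ fixes the product iff $k_n\equiv k_m\pmod 2$.

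This shows $S$ decomposes as $\text{Stab}(F)\sqcup T$, where $\text{Stab}(F)$ consists of those $a$ with $k_n,k_m$ both even (equivalently $a\equiv\pm 1\pmod{2n}$ and $\pm 1\pmod{2m}$) and has size $2$ by Proposition \ref{P:tracef} since $\gamma\geq 2$ under our hypothesis, while $T$ consists of those with $k_n,k_m$ both odd. To enumerate $T$, I will apply the Chinese remainder theorem for non-coprime moduli: each sign pair $(s_n,s_m)\in\{\pm 1\}^2$ determines a system $a\equiv s_n+n\pmod{2n}$, $a\equiv s_m+m\pmod{2m}$, which has a unique solution mod $2l$ iff $s_n+n\equiv s_m+m\pmod{2\gamma}$.

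Since $n,m$ are both even, $\gamma$ is even, so I will split into $\gamma=2$ and $\gamma\geq 4$. For $\gamma=2$, a direct check shows exactly two of the four systems are solvable regardless of the parities of $n/2,m/2$, giving $|T|=2$. For $\gamma\geq 4$, the two systems with $s_n=s_m$ are solvable iff $n\equiv m\pmod{2\gamma}$, equivalently $n/\gamma\equiv m/\gamma\pmod 2$; the two systems with $s_n\neq s_m$ would force $\gamma\mid 2$ and thus have no solution. Therefore $|T|=2$ if the two quotients have the same parity, and $|T|=0$ otherwise. Combining, $|S|=2$ exactly when $\gamma>2$ and $n/\gamma,m/\gamma$ have different parity (i.e., exactly one is even, since $\gcd(n/\gamma,m/\gamma)=1$), giving $[E:\bQ]=\varphi(2l)/2$; in every other subcase $|S|=4$ and $[E:\bQ]=\varphi(2l)/4$, as claimed. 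The main obstacle is the sign bookkeeping $\epsilon_n=(-1)^{k_n}$ in the product condition: this is precisely what makes $\text{Stab}(E)$ strictly larger than $\text{Stab}(F)$ in the generic case, and correctly tracking when the two flips occur simultaneously is what drives the entire case analysis.
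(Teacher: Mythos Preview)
Your argument is correct and follows exactly the method the paper indicates: the paper sets up the computation (``the degree of $E$ may similarly be computed'' via the kernel of $\bZ_{2l}^*\to Gal_\bQ(E)$) and then simply states the result without carrying out the case analysis. Your decomposition $S=\operatorname{Stab}(F)\sqcup T$ via the sign $\epsilon_n=(-1)^{k_n}$, together with the CRT enumeration of $T$, is precisely the ``easy computation'' the paper leaves to the reader.

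One small caveat worth a sentence in your write-up: the step ``$a$ fixes the product iff $k_n\equiv k_m\pmod 2$'' tacitly assumes both factors $\xi_{2n}+\xi_{2n}^{-1}$ and $\xi_{2m}+\xi_{2m}^{-1}$ are nonzero, i.e.\ $n,m>2$. If $n=2$ (so $\xi_4+\xi_4^{-1}=0$) the product generator vanishes, the third condition becomes vacuous, and $s_n$ --- hence the parity of $k_n$ --- is no longer uniquely determined by $a$. This does not affect your conclusion: when $n=2$ one has $\gamma=2$, and a direct count gives $S=\{\pm1,\pm1+m\}\subset\bZ_{2m}^*$, so $|S|=4$, in agreement with your $\gamma=2$ case. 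Alternatively, observe that for $n=2$ the conditions ``$a\equiv\pm1\pmod{2n}$'' and ``$a\equiv\pm1+n\pmod{2n}$'' both reduce to ``$a$ odd'', so the disjointness and exhaustiveness of $\operatorname{Stab}(F)\sqcup T$ are governed entirely by the mod-$2m$ conditions, which remain genuinely distinct since $m\geq 4$.
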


We may now pause to recover a result of Hooper \cite[Theorem 2]{H}.

\begin{cor}\label{C:deg2ext}
The invariant trace field and the trace field of the $(n,m,\infty)$ triangle group have the same degree, unless $n$ and $m$ are even, and either $\gamma=2$, or both $n/\gamma$ and $m/\gamma$ are odd, in which case the trace field is a degree two extension of the invariant trace field. In the latter cases, there is no Teichm\"uller curve uniformized by $\Delta^+(n,m,\infty)$.
\end{cor}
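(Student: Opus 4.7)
The plan is to assemble the corollary directly from Propositions \ref{P:tracef} and \ref{P:invtracef} by a parity case analysis, and then to obtain the second sentence by invoking the Kenyon--Smillie result already cited in the paper. First I would dispose of the case when at least one of $n, m$ is odd: the observation immediately preceding Proposition \ref{P:invtracef} gives $E=F$, so $[E:\bQ]=[F:\bQ]$, placing us in the ``same degree'' conclusion of the corollary.

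Next suppose $n$ and $m$ are both even, so $\gamma\geq 2$. By Proposition \ref{P:tracef} we have $[F:\bQ]=\varphi(2l)/2$. I would then split Proposition \ref{P:invtracef} into its two cases. If $\gamma>2$ and one of $n/\gamma$, $m/\gamma$ is even, then $[E:\bQ]=\varphi(2l)/2=[F:\bQ]$; combined with the inclusion $E\subseteq F$ this forces $E=F$. Otherwise, Proposition \ref{P:invtracef} gives $[E:\bQ]=\varphi(2l)/4$ while $[F:\bQ]=\varphi(2l)/2$, so $F$ is a quadratic extension of $E$. To match this ``otherwise'' clause with the corollary's phrasing, I would note that the negation of ``$\gamma>2$ and one of $n/\gamma, m/\gamma$ even'' is ``$\gamma\leq 2$, or both $n/\gamma, m/\gamma$ odd''; since we already have $\gamma\geq 2$, the first disjunct collapses to $\gamma=2$, yielding exactly the dichotomy stated in the corollary.

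For the second sentence I would appeal to Kenyon--Smillie \cite{KS}, already invoked in Section \ref{S:Hecke}, which asserts that the trace field equals the invariant trace field for any uniformizing group of a Teichm\"uller curve. In the exceptional cases just identified, $\Delta^+(n,m,\infty)$ has $F\supsetneq E$, so it violates this equality and therefore cannot uniformize any Teichm\"uller curve.

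There is no deep step here; the main thing to handle carefully is the Boolean bookkeeping that matches the complement of Proposition \ref{P:invtracef}'s hypothesis to the exceptional clause in the corollary, as outlined above. Once that is done, the first sentence is an immediate degree comparison, and the second sentence is a one-line consequence of \cite{KS}.
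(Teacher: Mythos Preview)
Your proposal is correct and follows essentially the same approach as the paper: the degree comparison is read off directly from Propositions \ref{P:tracef} and \ref{P:invtracef} (with the parity bookkeeping you outline), and the nonexistence of a uniformized Teichm\"uller curve in the exceptional cases is exactly the Kenyon--Smillie citation the paper gives. The paper's written proof is terser---it records only the Kenyon--Smillie step and leaves the degree comparison implicit---but your expanded version is what is intended.
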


\begin{proof}
Kenyon-Smillie have shown that for the uniformizing group of a Teichm\"uller curve, the trace field and invariant trace field always coincide \cite{KS}. 
\end{proof}


\subsection{Algebraic primitivity.}\label{SS:ap}  

\begin{proof}[\emph{\textbf{Proof of Theorem \ref{T:ap}}}]
By Corollary \ref{C:notap} it suffices to show that when $n$ and $m$ are distinct odd primes $\T(n,m)$ is not algebraically primitive, and to verify that $\T(n,m)$ is algebraically primitive for the $(n,m)$ in the theorem statement. This is straightforward from the above formulas for the degree of the invariant trace field and the formulas for genus in Corollary \ref{C:hoop}.
%
%
%
%
\end{proof}

\begin{rem}
The case when $n$ and $m$ are odd and relatively prime is discussed prior to Theorem 7.1 in \cite{BM}.
\end{rem}


\section{Lifting the pillowcase symmetry group}\label{S:Snm}

Let $n,m>1$ be natural numbers with $nm\geq 6$. Recall that $S(n,m)$ is the abelian square-tiled surface

$$M_{2nm}\left(\begin{array}{cccc}
nm-n-m & nm+n-m & nm+n+m & nm-n+m \\
nm+n-m & nm-n-m & nm-n+m & nm+n+m\\
\end{array}\right).$$

In this section we exhibit symmetries of $S(n,m)$ using the combinatorial model for square-tiled surfaces developed in \cite{W1}. An alternate approach, employed in \cite{BM}, exhibits the symmetries as lifts of M\"obius transformations. 

Recall that we denote by $\sigma_j$ the pillowcase symmetry that sends $z_1$ to $z_j$ (figure \ref{F:sigmas}). So, for example, the identity is $\sigma_1$. We will produce commuting lifts $\tilde{\sigma_2}$ and $\tilde{\sigma_4}$ of $\sigma_2$ and $\sigma_4$ to $S(n,m)$. 

\begin{proof}[\emph{\textbf{Proof of Proposition \ref{P:kleinlift}}}]
We begin by showing that any pair of lifts  $\tilde{\sigma_2},  \tilde{\sigma_4}$ which each have a fixed point automatically commute. The proof proceeds by finding formulas for these lifts. 

Let $\alpha_j$ be the free homotopy class of oriented loops about $z_j$, and let $T_j$ be the corresponding deck transformation. In terms of the combinatorial model for $M_N(A)$, 
$$T_j \left(\begin{array}{c} c_1\\c_2\end{array}\right)_{w/b}= \left(\begin{array}{c} c_1\\c_2\end{array}\right)_{w/b}+ \col_j(A).$$
By this we mean that the deck transformation $T_j$ sends each square to the square of the same color (white or black) with column label changed by addition of $\col_j(A)$ \cite[Section \ref{SS:comb}]{W1}.

The involution $\sigma_2$ transposes $\alpha_1$ and $\alpha_2$ as well as $\alpha_3$ and $\alpha_4$. It follows that if $\tilde{\sigma_2}$ is a lift of $\sigma_2$ with a fixed point, then the following equalities of deck transformations hold
 \begin{eqnarray}\label{E:s2intertwine}
 \tilde{\sigma_2} T_1 \tilde{\sigma_2} = T_2 \quad \text{and}\quad \tilde{\sigma_2} T_3 \tilde{\sigma_2} = T_4.
 \end{eqnarray}
 From these relations a more general commutation relation immediately follows for $S(n,m)$, since conjugation by $\tilde{\sigma_2}$ induces a automorphism of the deck group, and there is a unique such automorphism compatible with \ref{E:s2intertwine}. More concretely, applying the permutation $(12)(34)$ to the columns of $S(n,m)$ has the effect of transposing the entries in each column, so 
 \begin{eqnarray}\label{E:s2intertwine2}
 \tilde{\sigma_2} \circ\left(\begin{array}{c} c_1\\c_2\end{array}\right)\circ \tilde{\sigma_2} = \left(\begin{array}{c} c_2\\c_1\end{array}\right).
 \end{eqnarray}
Here $\left(\begin{array}{c} c_1\\c_2\end{array}\right)$ is in the column span, and we are identifying the column span with the deck group of $S(n,m)$.
 
Similarly, if $\tilde{\sigma_4}$ is any lift of $\sigma_4$ with a fixed point, 
 \begin{eqnarray} \label{E:s4intertwine}
 \tilde{\sigma_4} T_1 \tilde{\sigma_4} = T_4 \quad \text{and}\quad \tilde{\sigma_4} T_2 \tilde{\sigma_4} = T_3.
 \end{eqnarray}
  Again a more general commutation relation immediately follows: 
 \begin{eqnarray}\label{E:s4intertwine2}
 \tilde{\sigma_4}\circ \left(\begin{array}{c} c_1\\c_2\end{array}\right) \circ\tilde{\sigma_4} = \left(\begin{array}{c} -c_2\\-c_1\end{array}\right).
 \end{eqnarray}

 We are assuming that $\tilde{\sigma_2}$ has a fixed point. It must be on a 12 edge (a lift of the edge joining $z_1$ and $z_2$) or a 34 edge, and by acting by $\tilde{\sigma_4}$ if required we may assume it is on a 34 edge. (Commuting maps preserve each other's fixed point sets.)  Assume without loss of generality that $\tilde{\sigma_2}$ fixes the center of the 34 edge of the square $\left(\begin{array}{c} 0\\ 0\end{array}\right)_w$. So $$\sigma_2  \left(\begin{array}{c} 0\\ 0\end{array}\right)_{w/b}= \left(\begin{array}{c} 0\\ 0\end{array}\right)_{b/w}.$$
 Using the commutation relation \ref{E:s2intertwine2}, we then get  
 $$\sigma_2  \left(\begin{array}{c} c_1\\ c_2 \end{array}\right)_{w/b}= \left(\begin{array}{c} c_2\\ c_1 \end{array}\right)_{b/w}. $$
 
 Similarly, assume that $\tilde{\sigma_4}$ fixes the 14 edge of $\left(\begin{array}{c} d_1\\ d_2\end{array}\right)$. If either $n$ or $m$ is odd, an elementary computation gives that the column span (deck group) of $S(n,m)$ is $$\Span\left\{ \left(\begin{array}{c} -m \\ -m\end{array}\right),\left(\begin{array}{c} -n \\ n\end{array}\right)\right\}.$$
If both $n$ and $m$ are even, the column span is 
$$\Span\left\{ \left(\begin{array}{c} -2m \\ -2m\end{array}\right),\left(\begin{array}{c} -2n \\ 2n\end{array}\right),\left(\begin{array}{c} -n-m\\ n-m\end{array}\right)\right\}.$$ In the case that either $n$ or $m$ are odd, $\tilde{\sigma_2}$ commutes with the deck transformation $\left(\begin{array}{c} m \\ m\end{array}\right)$.
In the case that both $n$ and $m$ are even, $\left(\begin{array}{c} m \\ m\end{array}\right)$ is not a deck transformation, but nonetheless $\tilde{\sigma_2}$ commutes with the deck transformation $\left(\begin{array}{c} 2m \\ 2m\end{array}\right)$. 

Hence, applying a power of this deck transformation we may assume, if $n$ or $m$ is odd, that $d_1=-d_2$; if $n$ and $m$ are both even we may assume that either $d_1=-d_2$ or $d_1=-d_2-2m$. Since the 14 edge of $\left(\begin{array}{c} d_1\\ d_2\end{array}\right)_w$ is fixed by $\tilde{\sigma_4}$, we find that
\begin{eqnarray*}
\tilde{\sigma_4} \left(\begin{array}{c} d_1\\ d_2\end{array}\right)_w &=& \left(\begin{array}{c} d_1+nm-n+m\\ d_2+nm+n+m\end{array}\right)_b\\
\tilde{\sigma_2} \left(\begin{array}{c} d_1+nm-n+m\\ d_2+nm+n+m\end{array}\right)_b &=& \left(\begin{array}{c} d_1\\ d_2\end{array}\right)_w.
\end{eqnarray*}
Applying the commutation relations gives one of two possible formulas for $\tilde{\sigma_4}$. If $d_1=-d_2$, we conclude that 
\begin{eqnarray*}
\tilde{\sigma_4} \left(\begin{array}{c} c_1\\ c_2\end{array}\right)_w &=& \left(\begin{array}{c} -c_2 + nm-n+m\\ -c_1+nm+n+m\end{array}\right)_b \\
\tilde{\sigma_4} \left(\begin{array}{c} c_1\\ c_2\end{array}\right)_b &=& \left(\begin{array}{c} -c_2 + nm+n+m\\ -c_1+nm-n+m\end{array}\right)_w.
\end{eqnarray*}
If $d_1=-d_2-2m$, we conclude that
\begin{eqnarray*}
\tilde{\sigma_4} \left(\begin{array}{c} c_1\\ c_2\end{array}\right)_w &=& \left(\begin{array}{c} -c_2 + nm-n-m\\ -c_1+nm+n-m\end{array}\right)_b \\
\tilde{\sigma_4} \left(\begin{array}{c} c_1\\ c_2\end{array}\right)_b &=& \left(\begin{array}{c} -c_2 + nm+n-m\\ -c_1+nm-n-m\end{array}\right)_w.
\end{eqnarray*}
 
 A direct computation gives that using either choice of $\tilde{\sigma_4}$, the two involutions $\tilde{\sigma_2}$ and $\tilde{\sigma_4}$ commute and hence provide a lift of the pillowcase symmetry group to $S(n,m)$.

To see that $\tilde{\sigma_2}$  maps each horizontal cylinder to itself, one can check in the combinatorial model that $\tilde{\sigma_2}\left(\begin{array}{c} r_1\\ r_2\end{array}\right)_w$ and $\left(\begin{array}{c} r_1\\ r_2\end{array}\right)_b$ always differ by a power of the deck transformation $T_1\circ T_4$. The deck transformation $T_1\circ T_4$ is a ``horizontal translation" (figure \ref{F:OverTwo}), and all squares of the same same color in any given horizontal cylinder are related by a power of $T_1\circ T_4$.

\begin{figure}[h]
\includegraphics[scale=0.3]{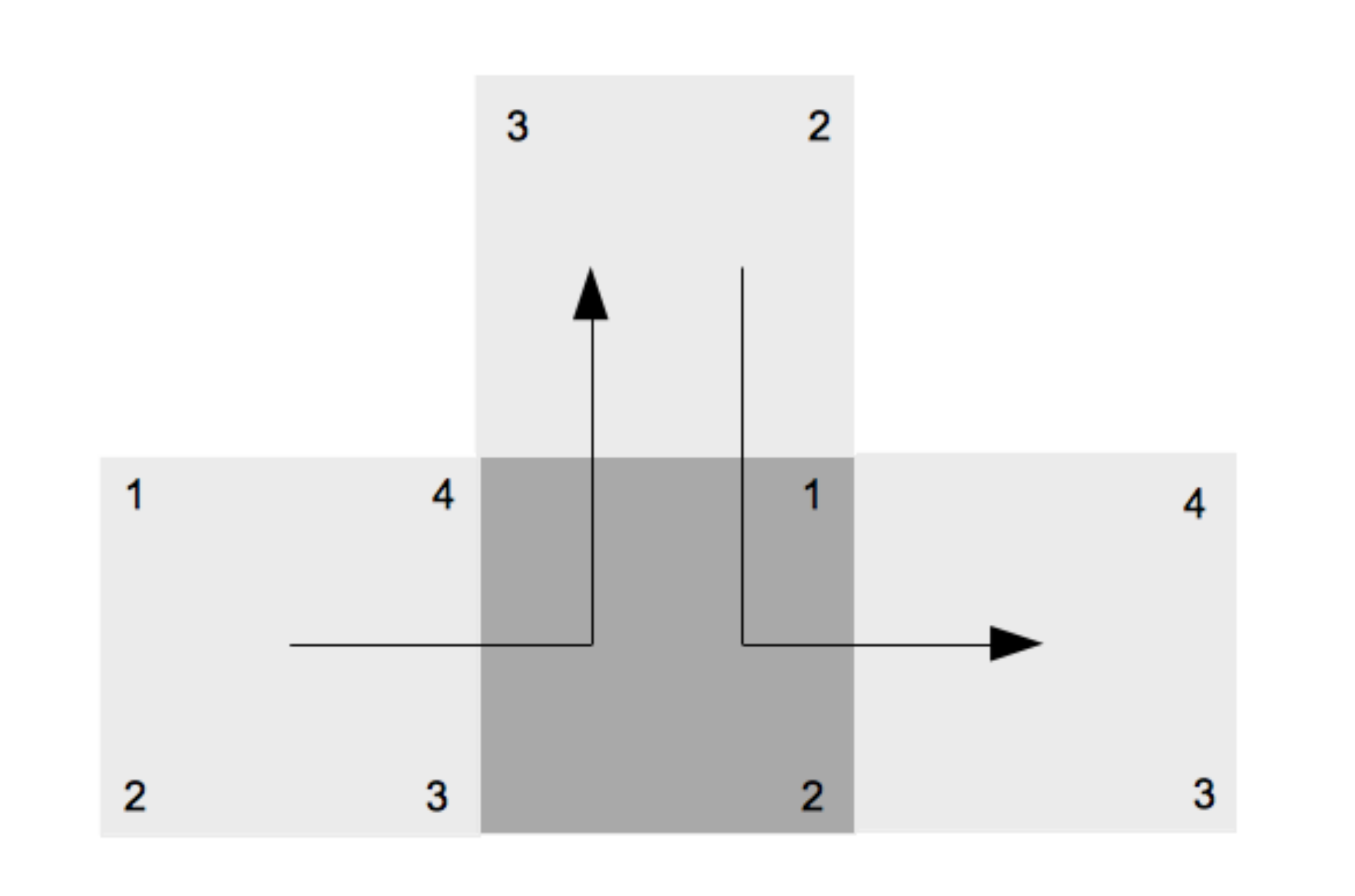}
\caption{The deck transformation $\alpha_1\circ\alpha_4$ translates squares horizontally.}
\label{F:OverTwo}
\end{figure}

The situation is similar for vertical cylinders and $\tilde{\sigma_4}$. 
\end{proof}

\begin{proof}[\emph{\textbf{Proof Lemma \ref{L:action}}}]
To begin, we comment on the claim in Proposition \ref{P:periods} that the nonzero bundles $H^1(\br)$ have $$\dim H^{1,0}(\br)=\dim H^{0,1}(\br)=1.$$ 
In general, the dimension of $H^1(\br)$ is $t(\br)+t(-\br)-2$ \cite[Proposition \ref{P:dim}]{W1}. The last two rows of $S(n,m)$ are obtained by negating the first two. Therefor if $\br$ has no zero entries, $$t_1(\br)+t_3(\br)=1=t_2(\br)+t_4(\br).$$
In this case $t(\br)=2=t(-\br)$ and hence $\dim H^{1,0}(\br)=\dim H^{0,1}(\br)=1.$ If $\br$ is nonzero but has a pair of zero entries, then $t(\br)=1=t(-\br)$ and $H^1(\br)$ is zero. So, henceforth we may assume $\br$ has no zero entries. 

The Klein group action follows directly from the commutation relations \ref{E:intertwine}, and the definition of $H^1(\br)$ in \cite{W1} as simultaneous eigenspaces of the deck group.

Finally, if $H^1(\br)$ is fixed by $\tilde{\sigma_3}$, then $r_1=r_3$ and $r_2=r_4$. However, because last two rows of $S(n,m)$ are obtained by negating the first two, $r_1=-r_3$ and $r_2=-r_4$. Since the $r_i$ are nonzero, $r_1=r_2=r_3=r_4=nm$. In particular, $H^1(\br)$ is also fixed by $\tilde{\sigma_2}$ and $\tilde{\sigma_4}$.

Suppose that $H^1(\br)$ is preserved by some involution $\tilde{\sigma}\in \{\tilde{\sigma_2}, \tilde{\sigma_4}\}$. Consider $\omega$ in $H^{1,0}(\br)$. Since $H^{1,0}(\br)$ has dimension 1, $\omega$ is either fixed or negated by the involution. By Proposition \ref{P:kleinlift}, $\tilde{\sigma}$ has a fixed point, which may not be a lift of one of the $z_j$ because the zeros of $\omega$ are at lifts of the $z_j$ (see the formula for $\omega$ in Part I). At the fixed point the involution acts by rotation by $\pi$. The coefficient of $dz$ in $\omega$ remains unchanged, and the $dz$ is negated. So $\omega$ is negated. The same argument shows that $\tilde{\sigma}$ negates $H^{0,1}(\br)$. 
\end{proof}


\section{Computing generators}\label{S:eqns}

In this section we compute the generators for $\T(n,m)$. We have chosen to record these computations here in full detail because they resolve \cite[Question 19]{H}.

\begin{proof}[\textbf{\emph{Proof of Theorem \ref{T:eqns}}}]
We consider a fiber $\S'_\infty$ of $\S'$ corresponding to $\lambda=\infty$. Letting $\lambda$ go to infinity has the effect of pinching all the vertical core curves of $S(n,m)$. Hence the noded Riemann surface $\S'_\infty$ consists of two components which are interchanged by $\tilde{\sigma_4}$, one, call it $P$, containing lifts of $z=0$ and $z=1$, and the other containing lifts of $z=\lambda$ and $z=\infty$. The function field of the first component $P$ is generated over $\bC(z)$ by $w_{1}$ and $w_{2}$, where $$w_{1}^N=z^{nm-n-m}(z-1)^{nm+n-m}\quad \text{and}\quad w_{2}^N=z^{nm+n-m}(z-1)^{nm-n-m}.$$

The desired fiber of the Bouw-M\"oller curve is given by the quotient of $P$ by $\tilde{\sigma_2}$. To determine this quotient, we calculate its function field $F$, which is the subset of the function field $\bC(z)[w_1, w_2]$ fixed by the action of $\tilde{\sigma_2}$. Note that $\tilde{\sigma_2}$ is given in general by a lift of $$z\mapsto \lambda \frac{z-1}{z-\lambda}=\frac{z-1}{\frac{z}{\lambda}-1},$$ hence the action of  $\tilde{\sigma_2}$ on the component $P$ of $\C'_\infty$ is a lift of the involution $z\mapsto 1-z$. All we will need to know about $\tilde{\sigma_2}$ is that when $\lambda\neq \infty$, it always has a fixed point, and hence, applying  $\tilde{\sigma_4}$ to the fixed point if necessary, it has a fixed point on the edge joining $z_1$ and $z_2$. Taking a limit, we find that when $\lambda=\infty$, $\tilde{\sigma_2}$ always has a fixed point on $P$ over $z=\frac12$.  

\emph{Case 1: $m$ odd.} Set $$p=w_1/w_2,\quad \text{so}\quad p^m=\frac{z-1}{z},$$ and set $$y=w_1 p^{\frac{m-1}2}/(z-1),\quad \text{so} \quad y^{2n}=z^{-1}(z-1)^{-1}.$$ Since $\tilde{\sigma_2}$ is a lift of the involution $z\mapsto 1-z$,  $\tilde{\sigma_2}(p)=\frac{\xi^l_m}{p}$ for some $l$, where $\xi_m$ is a primitive $m$-th root of unity. Since $m$ is odd, we can replace $p$ with $p$ times a $m$-th root of unity and assume that $\tilde{\sigma_2}(p)=\frac{1}{p}$.

Set $u=p+\frac1{p}$, so $u$ is fixed by $\tilde{\sigma_2}$. Observing that 
\begin{equation}\label{E:zp}
z=\frac1{1-p^m}, \quad z-1=\frac{p^m}{1-p^m},
\end{equation}
and 
\begin{equation}
\frac{(p-\xi_m^j)(p-\xi_m^{-j})}{p}=u-\xi_m^j-\xi_m^{-j}=u-2\cos\left(\frac{2\pi j}{m}\right),
\end{equation}
we conclude that 
\begin{equation}\label{E:yu1}
y^{2n}=\frac{(p^m-1)^2}{p^m}=(u-2) \prod_{j=1}^{\frac{m-1}2} \left(u-2\cos\frac{2\pi j}{m}\right)^2.
\end{equation}
It follows that $y$ is either fixed or negated by $\tilde{\sigma_2}$. But if $y$ were negated, there would be no fixed point above $z=\frac12$. So it must be that $y$ is fixed. 

We now claim that $\bC(y,u)=F$. Clearly, $\bC(y,u)\subset F$. Since $$p^2-up+1=0,$$ we have that $\bC(z)[w_1, w_2]=\bC(y,u)[p]$ is a degree two extension of $\bC(y,u)$. Since the map from $P$ to $P/\tilde{\sigma_2}$ is degree two, it follows that $\bC(z)[w_1, w_2]$ is a degree two extension of $F$, so the claim is proved. We have computed the function field of $P/\tilde{\sigma_2}$, and the given equation follows immediately in light of equation \ref{E:yu1}. 

When $\lambda\neq 0$, the abelian differential differential on $P$ is 
\begin{equation}\label{E:om1}
\omega+\tilde{\sigma_2}(\omega)+\tilde{\sigma_4}(\omega+\tilde{\sigma_2}(\omega)).
\end{equation}
We may take $\omega=z^{-s_1}(z-1)^{-s_2}(z-\lambda)^{-s_3}dz$, where 
\begin{eqnarray*}
&& s_1=\frac{nm-n-m}{2mn}, \quad s_2=\frac{nm+n-m}{2mn}, 
\\&&
s_3=\frac{nm+n+m}{2mn},  \quad s_4=\frac{nm-n+m}{2mn}.
\end{eqnarray*}
Using the symbol $\propto$ to denote equality up to a nonzero scalar, we can compute that, 
$$\tilde{\sigma_4}(\omega) \propto z^{-s_4} (z-1)^{-s_3}(z-\lambda)^{-s_2} dz.$$ The power of $z-\lambda$ in $\tilde{\sigma_4}(\omega)$ is greater than that in $\omega$. When $\lambda\to\infty$  there is a renormalization, with the effect that only the final term of \ref{E:om1} survives. Hence the abelian differential on $P$ is given by $$z^{-s_4} (z-1)^{-s_3}dz+\tilde{\sigma_2}(z^{-s_4} (z-1)^{-s_3}dz).$$ 

Now, using 
\begin{equation}\label{E:dz}
dz\propto \frac{p^{m-1}}{(1-p^m)^2}dp
\end{equation}
we compute that, 
$$z^{-s_4} (z-1)^{-s_3}dz \propto \frac{y p^{\frac{m-3}2} dp}{1-p^m}= \frac{y}{(u-2)\prod_{j=1}^{\frac{m-1}2} \left(u-2\cos \frac{2\pi j}{m}\right)} \frac{p-1}{p^2}dp.$$

Since 
\begin{eqnarray*}
\frac{p-1}{p^2}dp+\tilde{\sigma_2}\left(\frac{p-1}{p^2}dp\right) &=& \frac{p-1}{p^2}dp+ \frac{p^{-1}-1}{p^{-2}}\frac{-dp}{p^2}
\\&=&\left(1-\frac1{p^2}\right)dp=du.
\end{eqnarray*}
the formula for the differential follows. 

\emph{Case 2: $m$ even.} Set $$p=w_1/w_2,\quad \text{so} \quad p^m=\frac{z-1}{z},$$ and set $$q=w_1 p^{\frac{m}2}/(z-1),\quad \text{so} \quad q^{2nm}=z^{-n-m}(z-1)^{n-m}.$$ 

Over $z=\frac12$, we have $p^m=-1$. Note $\tilde{\sigma_2}(p)=\frac{\xi_{m}^l}{p}$ for some $l$. The existence of a fixed point for $\tilde{\sigma_2}$ over $z=\frac12$ gives that $l$ is odd, and by replacing $p$ with $p$ times an $m$-th root of unity we may assume that $\tilde{\sigma_2}(p)=\frac{\xi_{m}}{p}$.

Set $u=\xi_{2m}^{-1}p+\frac{\xi_{2m}}{p}$, and notice that 
\begin{eqnarray}\label{E:xi2}
u-\xi_{2m}^{2i-1}-\xi_{2m}^{1-2i}&=&\xi_{2m}^{-1}\frac{(p-\xi_m^i)(p-\xi_m^{1-i})}{p}
\\
 u-2&=&\xi_{2m}^{-1}\frac{(p-\xi_{2m})^2}{p}.
\end{eqnarray}

Direct computation shows that $$q^{2n}\propto [z(z-1)]^{-1} \left[\frac{z-1}{z} \right]^{\frac{n}{m}} \propto \frac{(p^m-1)^2}{p^m}p^n.$$ If we set $y=\frac{q(p-\xi_{2m})}{p}$, we get 
$$y^{2n}\propto \frac{(p^m-1)^2}{p^m} \frac{(p-\xi_{2m})^{2n}}{p^n}.$$
Since $p$ is an $m$-th root of unity when $z=\frac12$, it follows that $y\neq 0$ when $z=\frac12$. Now, using \ref{E:xi2} and possibly modifying $y$ by a root of unity, we get 
\begin{equation}\label{E:curve2}
y^{2n}= (u-2)^{n}\prod_{j=1}^{\frac{m}2} \left(u-2\cos\frac{\pi (2 j-1)}{m}\right)^2.
\end{equation}

It follows from this description that $\tilde{\sigma_2}$ fixes or negates $y$; again the existence of a fixed point requires that $y$ is fixed and not negated. As in the first case we see that $\bC(u,y)$ is the function field of $P/\langle\tilde{\sigma_2}\rangle$. The only difference for the case when $n$ is even is that the polynomial \ref{E:curve2} becomes reducible.

Now we wish to express the differential $$z^{-s_4} (z-1)^{-s_3}dz+\tilde{\sigma_2}(z^{-s_4} (z-1)^{-s_3}dz)$$ in terms if $u$ and $y$. 
\begin{eqnarray*}
z^{-s_4} (z-1)^{-s_3}dz
& \propto&\frac{qp^{\frac{m}2-1}}{z-1}dz 
\\& \propto&
q \frac{p^{\frac{m}2-2}}{1-p^m}  dp
\\& \propto&
y \frac{p}{(p-\xi_{2m})^2}\frac{p^{\frac{m}2}}{1-p^m} \frac{(p-\xi_{2m})dp}{p^2}
\\& \propto&
\frac{y}{(u-2)\prod_{j=1}^{\frac{m}2} \left(u-2\cos \frac{\pi (2j-1)}{m}\right)} \frac{(p-\xi_{2m})dp}{p^2}.
\end{eqnarray*}
Now, since 
\begin{eqnarray*}
\frac{(p-\xi_{2m})dp}{p^2}+\tilde{\sigma_2}\left(\frac{(p-\xi_{2m})dp}{p^2}\right)&=&
du,
\end{eqnarray*}
the formula for the differential follows. 
\end{proof}



\bibliography{mybib}{}
\bibliographystyle{amsalpha}
\end{document}